\newtheorem{theorem}{Theorem}[section]
\newtheorem{corollary}[theorem]{Corollary}
\newtheorem{lemma}[theorem]{Lemma}
\newtheorem{prop}[theorem]{Proposition}
\theoremstyle{definition}
\newtheorem{definition}[theorem]{Definition}
\newtheorem{notation}[theorem]{Notation}
\newtheorem*{notation*}{Notation}
\newtheorem{remark}[theorem]{Remark}
\DeclareMathOperator{\cn}{div}
\DeclareMathOperator{\di}{d}
\DeclareMathOperator{\sign}{sgn}
\DeclareMathOperator{\pv}{pv}
\def\eps{\epsilon}
\def\be{\begin{equation}}
\def\blA{\bigl\lVert}
\def\brA{\bigr\rVert}
\def\e{\eqref}
\def\ee{\end{equation}}
\def\dalpha{\di\!\alpha}
\def\deta{\di\! \eta}
\def\dmu{\di\! \mu}
\def\dxi{\di\!\xi}
\def\dt{\di\! t}
\def\dx{\di\! x}
\def\dy{\di\! y}
\def\fract{\frac{\di}{\dt}}
\def\defn{\mathrel{:=}}
\def\la{\left\lvert}
\def\lA{\left\lVert}
\def\le{\leq}
\def\les{\lesssim}
\def\mez{\frac{1}{2}}
\def\ra{\right\rvert}
\def\rA{\right\rVert}
\def\tdm{\frac{3}{2}}
\def\xN{\mathbb{N}}
\def\xR{\mathbb{R}}
\numberwithin{equation}{section}
\numberwithin{equation}{section}
\def\notebasdepage[#1]#2{\begingroup\def\thefootnote{\fnsymbol{footnote}}\footnote[#1]{#2}\endgroup}
\begin{document}

\date{}
\title{Paralinearization of the Muskat equation and application to the Cauchy problem} 

\author{Thomas Alazard}
\address{}

\author{Omar Lazar}

\maketitle

\begin{abstract}
We paralinearize the Muskat equation to extract an explicit parabolic evolution equation 
having a compact form. This result is applied to give a simple proof of 
the local well-posedness 
of the Cauchy problem for rough initial data, in homogeneous Sobolev spaces $\dot{H}^1(\xR)\cap \dot{H}^s(\xR)$ with $s>3/2$. 
This paper is essentially self-contained and does not rely on general 
results from paradifferential calculus.
\end{abstract}

\bibliographystyle{plain}

\section{Introduction}

The Muskat equation is a fundamental equation for 
incompressible fluids in porous media. It describes the evolution of a time-dependent free surface $\Sigma(t)$ separating two 
fluid domains $\Omega_1(t)$ and $\Omega_2(t)$. 
A common assumption in this theory is that the motion is in two dimensions so that the interface is a curve. 
In this introduction, for the sake of simplicity, we assume that the 
interface is a graph (the analysis is done later on for a general interface). 
On the supposition that the fluids extend indefinitely in horizontal directions, it results that
\begin{align*}
\Omega_1(t)&=\left\{ (x,y)\in \xR\times \xR\,;\, y>h(t,x)\right\},\\
\Omega_2(t)&=\left\{ (x,y)\in \xR\times \xR\,;\, y<h(t,x)\right\},\\
\Sigma(t)&=\partial \Omega_1(t)=\partial \Omega_2(t)=\{y=h(t,x)\}.
\end{align*}
Introduce the density $\rho_i$, the velocity $v_i$ and the pressure $P_i$ in the domain $\Omega_i$ ($i=1,2$). 
One assumes that the velocities $v_1$ and $v_2$ obey Darcy's law. 
Then, the equations by which the motion is to be determined are
\begin{alignat*}{3}
v_i&=\nabla (P_i+\rho_i g y) \qquad&&\text{in }&&\Omega_i,\\
\cn v_i&=0 && \text{in }&&\Omega_i,\\
P_1&=P_2 &&\text{on }&&\Sigma,\\
v_1\cdot n &=v_2\cdot n  &&\text{on }&&\Sigma,
\end{alignat*}
where $g$ is the gravity and $n$ is the outward unit normal to $\Omega_2$ on $\Sigma$,
$$
n=\frac{1}{\sqrt{1+(\partial_x h)^2}} \begin{pmatrix} -\partial_x h \\ 1\end{pmatrix}.
$$
The first two equations express the classical Darcy's law 
and the last two equations impose the continuity of the pressure and 
the normal velocities at the interface. This system is supplemented with an equation 
for the evolution of the free 
surface:
$$
\partial_t h=\sqrt{1+(\partial_x h)^2}\, v_2 \cdot n.
$$
The previous system has been introduced by Muskat 
in~\cite{Muskat} whose main application was in petroleum engineering 
(see~\cite{Muskatbook,Narasimhan} for many historical comments). 

\bigbreak

In \cite{CG-CMP}, C\'ordoba and Gancedo  discovered a formulation of the previous system 
based 
on contour integral, 
which applies whether the interface is a graph or not. 
The latter work opened the door to the solution of many important problems concerning 
the Cauchy problem or blow-up solutions 
(see \cite{CCG-Annals,CCFG-ARMA-2013,CCFG-ARMA-2016,CCFGLF-Annals-2012}, 
more references are given below as well as in the survey papers~\cite{GancedoSEMA,GraneroLazar}). 
This formulation is a compact equation 
where the unknown is the parametrization of the free surface, namely 
a function $f=f(t,x)$ depending on time $t\in \xR_+$ and $x\in \xR$, satisfying
\begin{equation}\label{nM1}
\partial_{t}f= \frac{\rho}{2\pi}\partial_{x} \int \arctan \left(\Delta_{\alpha} f \right)\dalpha,
\end{equation}
where $\rho={\rho_2-\rho_1}$ is the difference of the densities, the integral 
is understood in the principal value sense and $\Delta_{\alpha} f$ is the 
slope, namely
$$
\Delta_{\alpha} f (t,x)=\frac{f(x,t)-f(x-\alpha,t)}{\alpha}\cdot
$$
The beauty of equation \e{nM1} lies in its apparent simplicity, which should be compared with the complexity 
of the equations written in 
Eulerian formulation. This might suggest that~\e{nM1} is the simplest version of the Muskat equation 
one may hope for. However, since the equation is highly nonlocal 
(this means that the nonlinearity enters in the nonlocal terms), 
even with this formulation the study of the Cauchy problem for \e{nM1} 
is a very delicate problem. 
We refer the reader to the above mentioned papers for the description 
of the main difficulties 
one has to cope with.

\smallbreak

Our goal in this paper is to continue this line of research. We want to simplify further 
the study of the Muskat problem by transforming the equation~\e{nM1} into the 
simplest possible form. 
We shall prove that one can derive from the formulation~\e{nM1} an explicit parabolic evolution. 
In particular, we shall see that one can 
decouple the nonlinear and nonlocal aspects. There are many 
possible applications that one could work out of this explicit parabolic formulation. 
Here we shall study the Cauchy problem in homogeneous Sobolev spaces.

\smallbreak

The well-posedness of the Cauchy problem was first proved in \cite{CG-CMP} 
by C\'ordoba and Gancedo for initial data in $H^3(\xR)$ 
in the stable regime $\rho_2>\rho_1$ (they also proved that 
the problem is ill-posed in Sobolev spaces when $\rho_2<\rho_1$). 
Several extensions of their results have been obtained by different proofs. 
In \cite{Cheng-Belinchon-Shkoller-AdvMath}, 
Cheng, Granero-Belinch\'on, Shkoller 
proved the well-posedness of the Cauchy problem in $H^2(\xR)$ 
(introducing a Lagrangian point of view which can be used in a 
broad setting, see~\cite{GraneroShkoller}) and 
Constantin, Gancedo, Shvydkoy and Vicol~(\cite{CGSV-AIHP2017}) 
considered rough initial data which are in $W^{2,p}(\xR)$ for some $p>1$, as well 
they obtained a regularity criteria for the Muskat problem. 
We refer also to the recent work \cite{GraneroLazar} where 
a regularity criteria is obtained in terms of a control of some critical quantities. 
Many recent results are motivated by the fact that, 
loosely speaking, the Muskat equation 
has to do with the slope more than with the curvature of the fluid interface. Indeed, 
one scale invariant norm is the Lipschitz norm $\sup_{x\in\xR}\la \partial_x f(t,x)\ra$. 
We refer the reader to the work \cite{CCGRPS-AJM2016} 
of Constantin, C{\'o}rdoba, Gancedo, Rodr{\'\i}guez-Piazza 
and Strain for global well-posedness results assuming 
that the Lipschitz semi-norm is smaller than $1$ (see also \cite{PSt} where time decay of those solutions is proved). 
In \cite{DLL}, Deng, Lei and Lin proved the existence 
of global in time solutions with large slopes, 
assuming some monotonicity assumption on the data. 
In \cite{Cameron}, Cameron was able to prove a global existence result assuming 
that some critical quantity, namely the product of the maximal and minimal slopes, 
is smaller than 1. His result allows to consider arbitrary large slopes. 
By using a new formulation of the Muskat equation involving oscillatory integrals, 
C\'ordoba and the second author in \cite{Cordoba-Lazar-H3/2} proved that 
the Muskat equation is globally well-posed for  sufficiently smooth data 
provided the critical Sobolev norm  $\dot H^{\tdm}(\xR)$ is small enough. The latter is a 
global existence result of a unique strong solution having arbitrarily large slopes. 

These observations suggest to study the local in time well-posedness of the Cauchy problem 
without assuming that any $L^p$-norm 
of the curvature is finite. The well-posedness of the Cauchy problem 
in this case was obtained by Matioc~\cite{Matioc1,Matioc2}. 
Using tools from functional analysis, Matioc 
proved that the Cauchy problem is locally in time well-posed for initial 
data in Sobolev spaces $H^s(\xR)$ with $s>3/2$, without smallness assumption. 
We shall give a simpler proof which generalizes the latter result 
to homogeneous Sobolev spaces $\dot{H}^s(\xR)$. 
Eventually, let us mention that many recent results 
focus on different rough solutions, which are important for instance in 
the unstable regime $\rho_1>\rho_2$ (see {\em e.g}.\  
the existence mixing zones in \cite{mix1,mix2,Otto} 
or the dynamic between the two different regimes~\cite{shift1,shift2}). 
We refer also to \cite{uniq1,uniq2} where uniqueness 
issues have been studied using the convex integration scheme. 

In this paper we assume that the difference between 
the densities in the two fluids satisfies $\rho>0$, so, 
by rescaling in time, we can assume without loss of generality that $\rho=2$. 

A fundamental difference with the above mentioned results is that we shall determine 
the full structure of the nonlinearity instead of performing energy estimates. 
To explain this, we begin by identifying the nonlinear terms. 
Since $\rho=2$, one can rewrite equation~\e{nM1} as
\begin{equation*}
\partial_tf=\frac{1}{\pi}\int \frac{\partial_x \Delta_\alpha f}{1+(\Delta_\alpha f)^2}\dalpha
\end{equation*}
(in this introduction some computations are formal, but 
we shall rigorously justify them later). 
Consequently, the linearized Muskat equation reads
\be\label{LinM}
\partial_t u=\frac{1}{\pi}\pv \int \partial_x \Delta_\alpha u\dalpha.
\ee
Consider the singular integral operators
\be\label{defiHilbertLambda}
\mathcal{H}u=-\frac{1}{\pi}\pv\int \Delta_\alpha u \dalpha\quad\text{and}\quad
\Lambda=\mathcal{H}\partial_x.
\ee
Then $\mathcal{H}$ is the Hilbert transform (the Fourier multiplier with symbol $-i\sign(\xi)$) and $\Lambda$ 
is the square root of $-\partial_{xx}$. 
With the latter notation, the linearized Muskat equation~\e{LinM} 
reads
$$
\partial_t u+ \Lambda u=0.
$$
With this notation, the  Muskat equation~\e{nM1} can be written under the form 
\be\label{nM3}
\partial_t f+ \Lambda f=\mathcal{T}(f)f,
\ee
where $\mathcal{T}(f)$ is the operator defined by
$$
\mathcal{T}(f)g=-\frac{1}{\pi}\int \big(\partial_x \Delta_\alpha g\big)
\frac{(\Delta_\alpha f)^2}{1+(\Delta_\alpha f)^2}\dalpha.
$$
Our first main result will provide a thorough study of this nonlinear operator.
 \ Before going any further, let us fix some notations. 
\begin{definition}
$i)$ Given a real number $\sigma$, we denote by $\Lambda^\sigma$ 
the Fourier multiplier with symbol $\la\xi\ra^\sigma$ and by $\dot{H}^\sigma(\xR)$ 
the homogeneous Sobolev space of tempered distributions whose 
Fourier transform $\hat{u}$ belongs to 
$L^1_{loc}(\xR)$ and satisfies
$$
\lA u\rA_{\dot{H}^\sigma}^2=
\blA \Lambda^\sigma u\brA_{L^2}^2=\frac{1}{2\pi}\int_\xR \la \xi\ra^{2\sigma}\la \hat{u}(\xi)\ra^2\dxi<+\infty.
$$

$ii)$ We denote by $H^\sigma(\xR)$ the nonhomogeneous Sobolev space $L^2(\xR)\cap \dot{H}^\sigma(\xR)$. 
We set $H^\infty(\xR)\defn \cap_{\sigma\ge 0}H^\sigma(\xR)$ and 
introduce $X\defn \cap_{\sigma\ge 1}\dot{H}^\sigma(\xR)$, the set 
of tempered distributions whose 
Fourier transform $\hat{u}$ belongs to 
$L^1_{loc}(\xR)$ and whose derivative belongs to $H^\infty(\xR)$.

$iii)$ Given $0<s<1$, the homogeneous Besov space $\dot B^{s}_{2,1}(\mathbb R)$ consists of those 
tempered distributions $f$
whose Fourier transform is integrable near the origin and such that 
$$
\lA f \rA_{\dot B^{s}_{2,1}} =
\int \left(\int \frac{\la f(x)-f(x-\alpha)\ra^2}{\la \alpha\ra^{2s}}\dx\right)^\mez \frac{\dalpha}{\la\alpha\ra}<+\infty.
$$

$iv)$ We use the notation 
$\lA \cdot\rA_{E\cap F}=\lA \cdot\rA_E+\lA \cdot\rA_F$.
\end{definition}
\begin{theorem}\label{T1} 
\begin{enumerate}[i)]
\item\label{T1:low} (Low frequency estimate) There exists 
a constant $C$ such that, for all $f$ in $\dot{H}^{1}(\xR)$ and all $g$ in $\dot{H}^\tdm(\xR)$, 
$\mathcal{T}(f)g$ belongs to $L^2(\xR)$ and
$$
\lA \mathcal{T}(f)g\rA_{L^2}\le C \lA f\rA_{\dot{H}^{1}}
\lA g\rA_{\dot{H}^{\tdm}}.
$$
Moreover, $f\mapsto \mathcal{T}(f)f$ is locally Lipschitz from 
$\dot{H}^1(\xR)\cap \dot{H}^{\tdm}(\xR)$ to $L^2(\xR)$.
\item\label{T1:high} (High frequency estimate) 
For all $0<\nu<\eps<1/2$, there exists a positive constant $C>0$  
such that, for all functions $f,g$ in $X=\cap_{\sigma\ge 1}\dot{H}^\sigma(\xR)$,
\be\label{n3}
\mathcal{T}(f)g=\gamma(f)\Lambda g+V(f)\partial_x g+R(f,g)
\ee
where 
$$
\gamma(f)\defn \frac{f_x^2}{1+f_x^2},
$$
and $R(f,g)$ and $V(f)$ satisfy
\be\label{n400}
\lA R(f,g)\rA_{L^2}\le C\lA f\rA_{\dot{H}^{\tdm+\eps}}\lA g\rA_{\dot{B}^{1-\eps}_{2,1}},
\ee
and
\be\label{n401}
\lA V(f)\rA_{C^{0,\nu}}\defn\lA V(f)\rA_{L^\infty}+\sup_{y\in \xR}\bigg(\frac{\la V(f)(x+y)-V(f)(x)\ra}{\la y\ra^\nu}\bigg)
\le C\lA f\rA_{\dot{H}^1\cap \dot{H}^{\frac{3}{2}+\eps}}^2.
\ee
\item\label{T1:commutator} Let $0<\eps<1/2$. 
There exists a non-decreasing function $\mathcal{F}\colon \xR_+\rightarrow \xR_+$ 
such that, for all functions $f,g$ in $X$,
\be\label{n3.1}
\lA\Lambda^{1+\eps}\mathcal{T}(f)g-\mathcal{T}(f)\Lambda^{1+\eps}g\rA_{L^2}
\le \mathcal{F}\big(\lA f\rA_{\dot{H}^1\cap \dot{H}^{\frac{3}{2}+\eps}}\big)
\lA f\rA_{\dot{H}^1\cap \dot{H}^{\frac{3}{2}+\eps}}\lA g \rA_{\dot H^{\tdm+\eps}\cap \dot H^2}.
\ee
\end{enumerate}
\end{theorem}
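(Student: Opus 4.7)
The plan is to leverage the paralinearization from part (ii). Writing
\[
\mathcal{T}(f)g = \gamma(f)\Lambda g + V(f)\partial_x g + R(f,g)
\]
and observing that $\Lambda^{1+\eps}$ commutes with both $\Lambda$ and $\partial_x$, the commutator splits as
\[
\Lambda^{1+\eps}\mathcal{T}(f)g - \mathcal{T}(f)\Lambda^{1+\eps}g
= [\Lambda^{1+\eps},\gamma(f)]\Lambda g + [\Lambda^{1+\eps},V(f)]\partial_x g + \bigl(\Lambda^{1+\eps}R(f,g) - R(f,\Lambda^{1+\eps}g)\bigr),
\]
and each of these three pieces is to be estimated in $L^2$.

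The two multiplier commutators are amenable to a Kato--Ponce type estimate tuned to the available regularity. Since $g\in\dot{H}^{3/2+\eps}\cap\dot{H}^2$, the inputs $\Lambda g$ and $\partial_x g$ lie in $\dot{H}^{1/2+\eps}\cap\dot{H}^1$. For $a=\gamma(f)=f_x^2/(1+f_x^2)$, Moser-type composition in fractional Sobolev norms gives control of $\|\gamma(f)\|_{L^\infty\cap\dot{H}^{1/2+\eps}}$ by a polynomial in $\|f\|_{\dot{H}^1\cap\dot{H}^{3/2+\eps}}$ (using the 1D Sobolev embedding to bound $\|f_x\|_{L^\infty}$). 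For $a=V(f)$, the H\"older bound~\eqref{n401} alone is insufficient, so I would reopen the explicit construction of $V(f)$ from the proof of part (ii), which exhibits $V(f)$ as an $\alpha$-integral quadratic in $f_x$, and rerun that computation to extract a matching $\dot{H}^{1/2+\eps}$ bound of the form $\mathcal{F}(\|f\|)\|f\|^2_{\dot{H}^1\cap\dot{H}^{3/2+\eps}}$. Combined with a quantitative commutator inequality, this handles the first two pieces.

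The main obstacle is the remainder difference $\Lambda^{1+\eps}R(f,g)-R(f,\Lambda^{1+\eps}g)$: since part (ii) only provides an $L^2$ bound on $R$, we cannot estimate $\Lambda^{1+\eps}R(f,g)$ on its own, and the argument must rely on cancellation between the two $R$ terms. To exploit this, I would reopen the construction of $R$ from the proof of (ii) and represent $R(f,g)$ as an $\alpha$-integral of terms $(\partial_x\Delta_\alpha g)\cdot K(f;\alpha,x)$, where $K$ is the residual piece of the kernel $(\Delta_\alpha f)^2/(1+(\Delta_\alpha f)^2)$ after the contributions producing $\gamma(f)\Lambda g$ and $V(f)\partial_x g$ have been subtracted, and in particular enjoys extra vanishing in $\alpha$. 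Since $\Lambda^{1+\eps}$ commutes with both $\partial_x$ and $\Delta_\alpha$, the difference in question reduces to an $\alpha$-integral of pointwise commutators $[\Lambda^{1+\eps},K(f;\alpha,\cdot)](\partial_x\Delta_\alpha g)$. A uniform-in-$\alpha$ Kato--Ponce estimate, together with the extra $\alpha$-smoothing in $K$, then renders the integrand $\alpha$-integrable near $0$ and near $\infty$, producing the advertised bound with the required polynomial dependence on $\|f\|_{\dot{H}^1\cap\dot{H}^{3/2+\eps}}$ and $\|g\|_{\dot{H}^{3/2+\eps}\cap\dot{H}^2}$.
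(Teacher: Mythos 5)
Your route is genuinely different from the paper's, which never invokes the paralinearization \eqref{n3} to prove \eqref{n3.1}. The paper observes that $\Lambda^{1+\eps}$ commutes with $\Delta_\alpha$ and $\partial_x$, and hence writes the commutator directly as $R_1(f)g+R_2(f)g$ under the $\alpha$-integral, with
$$
R_1(f)g=-\frac1\pi\int \bigl(\Delta_\alpha g_x\bigr)\,\Lambda^{1+\eps}\Bigl(\tfrac{(\Delta_\alpha f)^2}{1+(\Delta_\alpha f)^2}\Bigr)\dalpha
$$
and $R_2$ a genuine double commutator (fractional Leibniz) estimated via Li's refinement of the Kenig--Ponce--Vega inequality. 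The crucial mechanism is that the excess derivative placed on the kernel (up to $\lA \Delta_\alpha f\rA_{\dot H^{1+\eps}}$, which at fixed $\alpha$ costs more than $f\in\dot H^{3/2+\eps}$ permits) is bartered against the $\alpha$-decay of $\lA\Delta_\alpha g_x\rA_{L^\infty}$; the Cauchy--Schwarz in $\alpha$ effectively transfers half a derivative from $f$ back onto $g$, landing on $\lA g\rA_{\dot H^2}\lA f\rA_{\dot H^{3/2+\eps}}$.

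Your plan collapses the $\alpha$-integral before commuting, and this destroys precisely that trading mechanism. The first piece $[\Lambda^{1+\eps},\gamma(f)]\Lambda g$ cannot be bounded by the right-hand side of \eqref{n3.1}: a single-commutator estimate must, in its high-low paraproduct contribution, pay the full $\Lambda^{1+\eps}$ on $\gamma(f)$ (against $\lA\Lambda g\rA_{L^\infty}$), which requires $\gamma(f)\in\dot H^{1+\eps}$ and hence $f\in\dot H^{2+\eps}$ --- a half derivative more than available. Concretely, take $f$ spectrally concentrated near frequency $N$ with $\lA f\rA_{\dot H^{3/2+\eps}}\sim1$ and $g$ at frequency $O(1)$: then $\gamma(f)\sim f_x^2$ has $\lA\gamma\rA_{L^2}\sim N^{-1/2-2\eps}$, while $\Lambda^{1+\eps}(\gamma\Lambda g)$ has frequency $\sim N$, so $\lA[\Lambda^{1+\eps},\gamma]\Lambda g\rA_{L^2}\sim N^{1/2-\eps}\to\infty$, although the full left side of \eqref{n3.1} stays bounded by the paper's result. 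So the three pieces of your decomposition have compensating blow-ups, and they cannot be estimated separately. The $V(f)$ commutator has the same problem (and, as you note, the H\"older bound \eqref{n401} alone gives nothing for a commutator of order $1+\eps>1$). Finally, for the remainder term, $R(f,\Lambda^{1+\eps}g)$ is not a commutator of $R$ with $\Lambda^{1+\eps}$, and no cancellation with $\Lambda^{1+\eps}R(f,g)$ is exhibited; your proposal to ``reopen the construction'' of $R$ essentially amounts to reproving part (iii) from scratch under the $\alpha$-integral, at which point the detour through the paralinearization has gained nothing. The viable path is the paper's: keep the $\alpha$-integral, split the commutator into a paraproduct-type piece and a double commutator, and only then invoke the Besov balancing in $\alpha$.
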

The proof of the first statement follows directly from the 
definition of fractional Sobolev spaces in terms 
of finite differences, see Section~\ref{S:preliminaries}. 
The proof of the second statement is the most 
delicate part of the proof, which requires to uncover 
some symmetries in the nonlinearity, see Section~\ref{S:high}. 
The last statement is proved in Section~\ref{S:commutator} 
by using sharp variants of the usual 
nonlinear estimates in Sobolev spaces. Namely we used for the later proof a version 
of the classical Kato-Ponce estimate proved recently by Li and also 
a refinement of the composition rule in Sobolev spaces proved in Section~\ref{S:preliminaries}. 

We deduce from the previous result a paralinearization formula for the nonlinearity. 
We do not consider paradifferential operators as introduced by Bony (\cite{Bony,MePise}). 
Instead, following Shnirelman~\cite{Shnirelman}, 
we consider a simpler version of these operators which is convenient for the analysis 
of the Muskat equation for rough solutions. 

\begin{corollary}\label{Coro:para}
Consider $0<\eps<1/2$ and, given a bounded function $a=a(x)$, denote by 
$\tilde{T}_a\colon \dot{H}^{1+\eps}(\xR)\rightarrow L^2(\xR)\cap \dot{H}^{1+\eps}(\xR)$ the paraproduct operator 
defined by
$$
\tilde{T}_ag=(I+\Lambda^{1+\eps})^{-1}(a\Lambda^{1+\eps}g).
$$
Then, there exists a function $\mathcal{F}\colon\xR\rightarrow\xR$ such that, for all $f\in X$,
\be\label{coro:paraeq}
\mathcal{T}(f)f=\tilde{T}_{\gamma(f)}\Lambda f+\tilde{T}_{V(f)}\partial_x f+
R_\eps(f),
\ee
where
\be\label{n408}
\lA R_\eps(f)\rA_{H^{1+\eps}}\le \mathcal{F}\big(\lA f\rA_{\dot{H}^1\cap \dot{H}^{\tdm+\eps}}\big)
\lA f\rA_{\dot{H}^1\cap \dot{H}^{\frac{3}{2}+\eps}}
\lA f \rA_{\dot H^{1}\cap \dot H^{2+\frac{\eps}{2}}}.
\ee
\end{corollary}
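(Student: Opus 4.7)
The plan is to reduce everything to Theorem \ref{T1} by applying $(I+\Lambda^{1+\eps})$ to both sides of the desired identity \e{coro:paraeq} and identifying the cancellations that emerge. Writing $R_\eps(f)\defn \mathcal{T}(f)f-\tilde{T}_{\gamma(f)}\Lambda f-\tilde{T}_{V(f)}\partial_x f$, the definition of $\tilde{T}_a$ gives directly
\begin{equation*}
(I+\Lambda^{1+\eps})R_\eps(f)=\mathcal{T}(f)f+\Lambda^{1+\eps}\mathcal{T}(f)f-\gamma(f)\Lambda^{2+\eps}f-V(f)\partial_x\Lambda^{1+\eps}f.
\end{equation*}
Next I would apply the structural formula \e{n3} with $g=\Lambda^{1+\eps}f$ to rewrite
\begin{equation*}
\gamma(f)\Lambda^{2+\eps}f+V(f)\partial_x\Lambda^{1+\eps}f=\mathcal{T}(f)\Lambda^{1+\eps}f-R(f,\Lambda^{1+\eps}f),
\end{equation*}
so that after substitution the two high-order terms cancel and one is left with the clean identity
\begin{equation*}
(I+\Lambda^{1+\eps})R_\eps(f)=\mathcal{T}(f)f+\bigl[\Lambda^{1+\eps},\mathcal{T}(f)\bigr]f+R(f,\Lambda^{1+\eps}f).
\end{equation*}

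The estimate \e{n408} then follows by bounding each of the three terms in $L^2$ and using that $(I+\Lambda^{1+\eps})^{-1}$ is a Fourier multiplier with symbol $1/(1+|\xi|^{1+\eps})$, which maps $L^2$ continuously into $H^{1+\eps}$. The first term is controlled by statement \ref{T1:low} applied with $g=f$, the commutator by statement \ref{T1:commutator}, and the remainder $R(f,\Lambda^{1+\eps}f)$ by the high-frequency estimate \e{n400}.

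The one slightly delicate step is the Besov estimate for $R(f,\Lambda^{1+\eps}f)$, since \e{n400} produces the norm $\lA \Lambda^{1+\eps}f\rA_{\dot B^{1-\eps}_{2,1}}$, which must be converted to Sobolev control. Using a Littlewood--Paley decomposition and summing by Cauchy--Schwarz separately on low and high frequencies, one has the embedding
\begin{equation*}
\lA \Lambda^{1+\eps}f\rA_{\dot B^{1-\eps}_{2,1}}\le C\bigl(\lA f\rA_{\dot H^1}+\lA f\rA_{\dot H^{2+\eps/2}}\bigr),
\end{equation*}
where the low-frequency dyadic blocks are summed against $\lA f\rA_{\dot H^1}$ (with geometric factor $2^{j}$) and the high-frequency blocks against $\lA f\rA_{\dot H^{2+\eps/2}}$ (with geometric factor $2^{-j\eps/2}$). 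This is precisely the reason the right-hand side of \e{n408} carries the factor $\lA f\rA_{\dot H^{1}\cap \dot H^{2+\eps/2}}$, and it is the only point requiring the strict inequality $\eps/2>0$ in the final Sobolev index.

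The main obstacle is therefore not really an obstacle at all: the cancellation of the high-order terms $\gamma(f)\Lambda^{2+\eps}f+V(f)\partial_x\Lambda^{1+\eps}f$ is built into the structural formula \e{n3}, and once this is observed the three parts of Theorem \ref{T1} supply exactly the bounds needed for the three surviving pieces. Combining them produces a single estimate of the form $\mathcal{F}(\lA f\rA_{\dot H^1\cap \dot H^{\tdm+\eps}})\lA f\rA_{\dot H^1\cap \dot H^{\tdm+\eps}}\lA f\rA_{\dot H^1\cap \dot H^{2+\eps/2}}$, which is \e{n408}.
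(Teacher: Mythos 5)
Your proof is correct and follows essentially the same route as the paper: apply $(I+\Lambda^{1+\eps})$, substitute formula \e{n3} with $g=\Lambda^{1+\eps}f$ so the paraproduct contributions cancel, and bound the resulting commutator and remainder via \e{n3.1}, \e{n400}, and an embedding of a Sobolev intersection into $\dot B^{1-\eps}_{2,1}$. You are in fact slightly more careful than the paper's displayed identity, which omits the term $(I+\Lambda^{1+\eps})^{-1}\mathcal{T}(f)f$; your identity $(I+\Lambda^{1+\eps})R_\eps(f)=\mathcal{T}(f)f+\bigl[\Lambda^{1+\eps},\mathcal{T}(f)\bigr]f+R(f,\Lambda^{1+\eps}f)$ is the correct one, and the extra piece is harmless because $\mathcal{T}(f)f\in L^2$ by statement~\ref{T1:low}.
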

\begin{proof}
Writing
$$
\mathcal{T}(f)f=(I+\Lambda^{1+\eps})^{-1}\mathcal{T}(f)\Lambda^{1+\eps}f+(I+\Lambda^{1+\eps})^{-1}\big[\Lambda^{1+\eps},\mathcal{T}(f)\big]f,
$$
and using the formula~\e{n3} we find that \e{coro:paraeq} holds with
$$
R_\eps(f)=(I+\Lambda^{1+\eps})^{-1}R\big(f,\Lambda^{1+\eps}f\big)+(I+\Lambda^{1+\eps})^{-1}\big[\Lambda^{1+\eps},\mathcal{T}(f)\big]f.
$$
Then, it follows from~\e{n400} and \e{n3.1} that 
$$
\lA R_\eps(f)\rA_{{H}^{1+\eps}}\le \mathcal{F}\big(\lA f\rA_{\dot{H}^1\cap \dot{H}^{\tdm+\eps}}\big)
\lA f\rA_{\dot{H}^1\cap \dot{H}^{\frac{3}{2}+\eps}}\Big(
\lA f \rA_{\dot H^{\tdm+\eps}\cap \dot H^{2}}+\lA \Lambda^{1+\eps}f\rA_{\dot{B}^{1-\eps}_{2,1}}\Big).
$$
So~\e{n408} follows from 
$\dot{H}^{1-\frac{3\eps}{2}}(\xR)\cap \dot{H}^{1-\frac{\eps}{2}}(\xR)\hookrightarrow \dot{B}^{1-\eps}_{2,1}(\xR)$ 
(see Lemma~\ref{L:B21}).
\end{proof}

We now consider the Cauchy problem for the Muskat equation. 
Substituting the above identity for $\mathcal{T}(f)$ in the equation~\e{nM3} and simplifying, we find
$$
\Big(\partial_t -V(f)\partial_x +\frac{1}{1+f_x^2}\Lambda \Big)\Lambda^{1+\eps}f=\Lambda^{1+\eps}R_\eps(f).
$$
Now, the key point is that the estimates~\e{n400} and \e{n408} 
mean that the remainder term $R_\eps(f)$ and the operator $V\partial_x$ 
contribute as operators of order stricly less than $1$ (namely $1-\eps/2$ and $1-\nu$) 
to an energy estimate, and so they are 
sub-principal terms for the analysis of the Cauchy problem. 
We also observe that the Muskat equation 
is parabolic as long as one controls the $L^\infty$-norm of $f_x$ only. 
This observation is related to our second goal, which is to solve the Cauchy problem 
in homogeneous Sobolev spaces 
instead of nonhomogeneous spaces. 
This is a natural result since the Muskat equation is invariant by 
the transformation $f\mapsto f+C$. This allows us to make an assumption 
only on the $L^\infty$-norm of the slope of the initial data, allowing initial data 
which are not bounded or not square integrable.

\begin{theorem} \label{T:Cauchy}
Consider $s\in (3/2,2)$ and an initial data 
$f_0$ in $\dot{H}^1(\xR)\cap \dot{H}^s(\xR)$. 
Then, there exists a positive time $T$ such that the Cauchy problem for \e{nM1} with initial 
data $f_0$ has a unique solution $f$ satisfying $f(t,x)=f_0(x)+u(t,x)$ with $u(0,x)=0$ and
$$
u \in C^0\big([0,T]; H^s(\xR)\big)\cap 
C^1\big([0,T];H^{s-1}(\xR)\big)
\cap L^{2}\big(0,T; H^{s+\mez}(\xR)\big),
$$
where $H^\sigma(\xR)$ denotes the nonhomogeneous Sobolev space 
$L^2(\xR)\cap \dot{H}^\sigma(\xR)$.
\end{theorem}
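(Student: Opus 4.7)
The plan is to treat equation \eqref{nM3} as a nonlinear parabolic equation of order one, to use the paralinearization supplied by Corollary~\ref{Coro:para} to isolate its explicit quasilinear structure, and to close energy estimates in $\dot H^s(\xR)$. The assumption $f_0\in \dot H^1\cap \dot H^s$ ensures that $\Lambda f_0\in L^2\cap \dot H^{s-1}=H^{s-1}$, so it is natural to look for $f=f_0+u$ with $u(0)=0$ and $u\in C^0([0,T];H^s(\xR))$. To make the calculations rigorous I would first solve a regularized problem, for instance
\begin{equation*}
\partial_t u^\eta+\Lambda u^\eta+\eta\Lambda^2 u^\eta=-\Lambda f_0+\mathcal{T}(f_0+u^\eta)(f_0+u^\eta),
\end{equation*}
by Picard iteration in $H^s(\xR)$; Theorem~\ref{T1}\,\ref{T1:low} together with the higher-regularity content of part~\ref{T1:high} provides the Lipschitz bounds needed to run the iteration.

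The core of the argument is a uniform-in-$\eta$ a priori estimate. Fix $\eps\in(0,1/2)$ with $3/2+\eps\leq s$. Corollary~\ref{Coro:para} combined with \eqref{nM3} and the identity $\Lambda^{1+\eps}\tilde{T}_ag=a\Lambda^{1+\eps}g-\tilde{T}_ag$ yields the quasilinear parabolic equation
\begin{equation*}
\Big(\partial_t-V(f)\partial_x+\tfrac{1}{1+f_x^2}\Lambda\Big)\Lambda^{1+\eps}f=G(f),
\end{equation*}
with $G(f)$ controlled in $L^2$ by \eqref{n408}, \eqref{n400} and \eqref{n401}. Applying $\Lambda^{s-1-\eps}$ and using the commutator bound \eqref{n3.1} together with the Hölder estimate \eqref{n401} to commute $\Lambda^{s-1-\eps}$ with $V(f)\partial_x$ and with $\tfrac{1}{1+f_x^2}\Lambda$, one obtains an analogous equation for $\Lambda^sf$ whose right-hand side remains bounded in $L^2$ in terms of $\|f\|_{\dot H^1\cap \dot H^s}$. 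Testing against $\Lambda^sf$ then gives
\begin{equation*}
\tfrac12\tfrac{d}{dt}\|\Lambda^sf\|_{L^2}^2+\int\frac{(\Lambda\Lambda^sf)(\Lambda^sf)}{1+f_x^2}\,\dx=\int V(f)\,\partial_x\Lambda^sf\cdot\Lambda^sf\,\dx+(\text{sub-principal}).
\end{equation*}
The second integral on the left is bounded from below by $c(\|f_x\|_{L^\infty})\|\Lambda^sf\|_{\dot H^{1/2}}^2$, modulo a commutator with $\tfrac{1}{1+f_x^2}$, and this provides the parabolic gain of half a derivative; the transport term is absorbed by Young's inequality and \eqref{n401}. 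Combined with a parallel $L^2$ estimate for $u$ obtained from Theorem~\ref{T1}\,\ref{T1:low}, one closes a differential inequality
\begin{equation*}
\frac{d}{dt}\|u\|_{H^s}^2+c\,\|u\|_{\dot H^{s+\frac12}}^2\leq \mathcal{F}\big(\|f\|_{\dot H^1\cap \dot H^s}\big)\,\|f\|_{\dot H^1\cap \dot H^s}^2,
\end{equation*}
yielding a time of existence depending only on $\|f_0\|_{\dot H^1\cap \dot H^s}$.

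Passing to the limit $\eta\to 0$ by weak compactness in $L^\infty_t H^s\cap L^2_t\dot H^{s+1/2}$ and strong compactness via Aubin--Lions delivers a solution with the stated regularity; the $C^0_tH^s$ continuity follows from the parabolic smoothing and the $C^1_tH^{s-1}$ regularity from the equation. Uniqueness and continuous dependence come from an $L^2$ energy estimate for the difference $\delta=f_1-f_2$ of two solutions, which satisfies a linear parabolic equation of the same type whose coefficients are controlled in terms of $\|f_i\|_{H^s}$, closed by Gronwall.

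The step I expect to be the most delicate is the effective positivity of the dissipation $\int\tfrac{(\Lambda v)v}{1+f_x^2}\,\dx$ under only $\|f_x\|_{L^\infty}$-control of the coefficient. A commutator analysis with $\Lambda^{1/2}$ is required to show that this integral dominates $c\,\|v\|_{\dot H^{1/2}}^2$ with an error strictly subordinate to this dissipation, so that the rough transport $V(f)\partial_x$ with $V(f)\in C^{0,\nu}$ and the remainder $R_\eps$ can both be absorbed. This is precisely where the sharp structural information in \eqref{n400}--\eqref{n3.1} is used, and it is what allows us to work in homogeneous Sobolev spaces with a hypothesis only on the slope of the initial data rather than on its curvature.
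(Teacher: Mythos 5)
Your proposal captures the right overall strategy---regularize, use the paralinearization from Corollary~\ref{Coro:para} and the Hilbert-transform commutator bound of Lemma~\ref{L:Hilbert} to close an $\dot H^s$ energy estimate with half a derivative of parabolic gain, then pass to the limit---but it diverges from the paper on three structural points, two of which are genuine gaps.

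First, the regularization. You add a viscosity $\eta\Lambda^2$ but leave $f_0$ unregularized, so $f=f_0+u^\eta$ is never better than $\dot H^1\cap\dot H^s$ with $s<2$. All the identities and estimates of Theorem~\ref{T1} and Corollary~\ref{Coro:para} that drive the a priori bound are stated for $f\in X=\cap_{\sigma\ge 1}\dot H^\sigma$, and the proof of Proposition~\ref{T1:prophigh} genuinely manipulates expressions such as $\partial_\alpha\mathcal{E}$ that require more regularity than $\dot H^s$ alone. The paper sidesteps this by using the Friedrichs projections $J_n$ both on the data and inside the nonlinearity; since $J_nf_n=f_n$, the approximate solutions are band-limited and automatically lie in $X$, so the paralinearization applies pointwise in time. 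To repair your version you would also have to mollify $f_0$ and then handle an extra limit, which is not discussed.

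Second, the passage to the limit. You invoke Aubin--Lions compactness. The paper explicitly remarks that ``one cannot apply standard compactness results since the equation is non-local'' and instead proves that the approximate solutions form a Cauchy sequence in $C^0([0,T];\dot H^{1/2})$, upgraded to $H^{s'}$, $s'<s$, by interpolation. On $\xR$, Aubin--Lions only yields strong convergence on compact sets, while the nonlinearity $\mathcal{T}(f)f$ is a singular integral over all of $\xR$; local strong convergence is not enough to pass to the limit there without an additional tightness or decay argument, which you do not supply. The Cauchy-sequence route avoids this entirely and is also exactly what gives uniqueness.

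Third, the form of the energy identity. You act with $\Lambda^s$ and test against $\Lambda^s f$, arriving at $\int (\Lambda\Lambda^s f)(\Lambda^s f)/(1+f_x^2)\,\dx$, which is not sign-definite and forces a commutator of $\Lambda^{1/2}$ with the rough coefficient $1/(1+f_x^2)$ to extract the dissipation; you flag this as the delicate step but do not carry it out. The paper avoids this altogether by distributing the derivatives asymmetrically, writing $(\Lambda^s\mathcal{T}(f_n)f_n,\Lambda^s f_n)=(\Lambda^{s-\mez}\mathcal{T}(f_n)f_n,\Lambda^{s+\mez}f_n)$ and applying the paralinearization with $\Lambda^{1+\eps}=\Lambda^{s-\mez}$, so that the elliptic term becomes $(\gamma_n\Lambda^{s+\mez}f_n,\Lambda^{s+\mez}f_n)$ and the dissipation appears as the manifestly nonnegative quantity $\int (\Lambda^{s+\mez}f_n)^2/(1+f_{nx}^2)\,\dx$ with no commutator needed. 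Relatedly, you state uniqueness via an ``$L^2$ energy estimate''; the same sign issue occurs there, which is why the paper performs the difference estimate in $\dot H^{1/2}$ (testing against $\Lambda g$), not $L^2$.

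In short, the outline is in the right spirit, but the choice of regularization conflicts with the hypotheses of the key lemmas, the compactness step is the one the paper identifies as not available, and the dissipation positivity needs the derivative-redistribution trick (or an explicit commutator estimate you do not provide) to close.
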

The latter result is proved in the last section. We conclude this introduction by fixing some notations.

\begin{notation}\label{N:21}
\begin{enumerate}[i)]
\item We denote by $f_x$ the spatial derivative of $f$ . 

\item $A \lesssim B$ means that there is $C>0$, depending only on fixed quantities, such that $A \leq C B$. 

\item Given $g=g(\alpha,x)$ and $Y$ a space of functions depending only on $x$, 
the notation 
$\lA g\rA_Y$ is a compact notation for $\alpha \mapsto \lA g(\alpha,\cdot)\rA_{Y}$.   
\end{enumerate}
\end{notation}

\section{Preliminaries}\label{S:preliminaries}

In this section, we recall or prove various 
results about Besov spaces which we will need throughout the article. 
We use the definition of these spaces originally given 
by Besov in~\cite{Besov}, using integrability properties of finite differences.

Given a real number $\alpha$, the finite difference operators $\delta_\alpha$ and $s_\alpha$ are defined by:
\begin{align*}
&\delta_{\alpha} f(x)=f(x)-f(x-\alpha),\\
&s_\alpha f(x)=2f(x)-f(x-\alpha)-f(x+\alpha).
\end{align*}
\begin{definition}
Consider three real numbers $(p,q,s)$ in $[1,\infty]^2 \times (0,2)$. 
The homogeneous Besov space $\dot B^{s}_{p,q}(\xR)$ consists of those 
tempered distributions $f$
whose Fourier transform is integrable near the origin and such that 
the following quantity $\lA f\rA_{\dot{B}^s_{p,q}}$ is finite: 
\begin{alignat}{2}
\lA f \rA_{\dot B^{s}_{p,q}} &=
\left\Vert \frac{\Vert  \delta_{\alpha} f\Vert_{L^{p}(\xR,\dx)}}{ \vert{\alpha}\vert^{s} } \right\Vert_{L^{q}(\mathbb R, \vert \alpha \vert^{-1} \dalpha)}
\label{Bs<1}\qquad &&\text{for }s\in(0,1),\\
\lA f \rA_{\dot B^{s}_{p,q}} &=   
\left\Vert \frac{\Vert s_\alpha f\Vert_{L^{p}(\xR;\dx)}}{ \vert{\alpha}\vert^{s} } \right\Vert_{L^{q}(\mathbb R, \vert \alpha \vert^{-1} \dalpha)}
\qquad &&\text{for }s\in[1,2).\label{Besov2}
\end{alignat}
\end{definition}
We refer the reader 
to the book of Peetre~\cite[chapter 8]{Peetre}  for 
the equivalence between these definitions and 
the one in terms of Littlewood-Paley decomposition (see also \cite[Prop.\ 9]{BourdaudMeyer} 
or~\cite[Theorems~$2.36$, $2.37$]{BCD} for the case $s\in (0,1]$). 

In this paper, we use only Besov spaces of the form
$$
\dot{B}^s_{2,2}(\xR),\quad \dot{B}^s_{\infty,2}(\xR),\quad \dot{B}^s_{2,1}(\xR).
$$
We will make extensive use of the fact that
$\lA \cdot\rA_{\dot H^{s}}$ and $\lA \cdot\rA_{\dot{B}^s_{2,2}}$ are equivalent for $s\in (0,2)$. 
Moreover, for $s\in (0,1)$,
\be\label{SE0}
\lA u\rA_{\dot H^{s}}^2=\frac{1}{4\pi c(s)}\lA u\rA_{\dot{B}^s_{2,2}}^2
\quad\text{with}\quad c(s)=\int_\xR \frac{1-\cos(t)}{\la t\ra^{1+2s}}\dt.
\ee 
We will also make extensive use of the fact that, for all $s$ in $(0,2)$,
\be\label{SE}
\dot H^{s+\mez} (\xR)\hookrightarrow \dot B^{s}_{\infty,2}(\mathbb R).
\ee
We will also use the following
\begin{lemma}\label{L:B21}
For any $s\in (0,1)$ and any $\delta>0$ such that $[s-\delta,s+\delta]\subset (0,1)$, 
$$
\dot{H}^{s-\delta}(\xR)\cap \dot{H}^{s+\delta}(\xR)\hookrightarrow \dot{B}^{s}_{2,1}(\xR).
$$
\end{lemma}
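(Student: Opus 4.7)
The plan is to bound the $\dot B^{s}_{2,1}$-norm by splitting the defining integral at $\la\alpha\ra=1$, and then applying Cauchy--Schwarz on each piece in such a way that the weighted $L^2$ integral of $\alpha\mapsto \lA \delta_\alpha f\rA_{L^2}$ that appears can be identified, via~\e{SE0}, with a homogeneous Sobolev norm of $f$. The higher regularity $\dot H^{s+\delta}$ will control the small-$\alpha$ part of the integral (where the weight is singular but $\delta_\alpha f$ is tame), while the lower regularity $\dot H^{s-\delta}$ will take care of the large-$\alpha$ tail (where only boundedness of $\lA\delta_\alpha f\rA_{L^2}$ can be expected).

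Concretely, starting from
$$
\lA f\rA_{\dot B^{s}_{2,1}}=\int_\xR \frac{\lA \delta_\alpha f\rA_{L^2}}{\la\alpha\ra^{1+s}}\dalpha,
$$
on the set $\{\la\alpha\ra\le 1\}$ I would split the weight as $\la\alpha\ra^{-(1+s)}=\la\alpha\ra^{-(\mez+s+\delta)}\cdot \la\alpha\ra^{-(\mez-\delta)}$ and apply Cauchy--Schwarz to obtain
$$
\int_{\la\alpha\ra\le 1}\frac{\lA\delta_\alpha f\rA_{L^2}}{\la\alpha\ra^{1+s}}\dalpha
\le
\bigg(\int_{\la\alpha\ra\le 1}\frac{\lA\delta_\alpha f\rA_{L^2}^2}{\la\alpha\ra^{1+2(s+\delta)}}\dalpha\bigg)^{\!\mez}
\bigg(\int_{\la\alpha\ra\le 1}\la\alpha\ra^{2\delta-1}\dalpha\bigg)^{\!\mez}.
$$
By~\e{SE0} applied to the index $s+\delta\in (0,1)$, the first factor is bounded by a constant times $\lA f\rA_{\dot H^{s+\delta}}$; the second factor is finite (of order $1/\sqrt{\delta}$) precisely because $2\delta>0$.

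On the complementary set $\{\la\alpha\ra\ge 1\}$ I would use the symmetric decomposition $\la\alpha\ra^{-(1+s)}=\la\alpha\ra^{-(\mez+s-\delta)}\cdot \la\alpha\ra^{-(\mez+\delta)}$. Cauchy--Schwarz now produces the $\dot H^{s-\delta}$ integrand on one side (again via~\e{SE0}, using $s-\delta\in (0,1)$) and the convergent integral $\int_{\la\alpha\ra\ge 1}\la\alpha\ra^{-(1+2\delta)}\dalpha$ on the other. Summing the two contributions yields the desired embedding
$$
\lA f\rA_{\dot B^{s}_{2,1}}\lesssim \lA f\rA_{\dot H^{s-\delta}}+\lA f\rA_{\dot H^{s+\delta}}.
$$

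There is no genuine obstacle here; the whole argument rests on a single Cauchy--Schwarz inequality with a tailored weight split. The only thing to verify is that both indices $s\pm\delta$ lie in the range $(0,1)$, so that~\e{SE0} legitimately identifies the two $L^2$-in-$\alpha$ integrals with $\dot H^{s\pm\delta}$-norms of $f$ — but this is exactly the hypothesis $[s-\delta,s+\delta]\subset(0,1)$ of the lemma.
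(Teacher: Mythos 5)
Your proof is correct and is essentially identical to the paper's: the same split at $\la\alpha\ra=1$, the same redistribution of a factor $\la\alpha\ra^{\pm\delta}$ before Cauchy--Schwarz, and the same identification of the resulting $L^2_\alpha$-integral with $\lA f\rA_{\dot H^{s\pm\delta}}$ via~\e{SE0}. The only cosmetic difference is that the paper writes the measure as $\dalpha/\la\alpha\ra$ while you fold the extra $\la\alpha\ra^{-1}$ into the exponents.
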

\begin{proof}
We have
\begin{align*}
\int_{\la\alpha\ra\le 1} 
 \frac{\Vert \delta_\alpha f\Vert_{L^{2}(\xR;\dx)}}{ \vert{\alpha}\vert^{s} } \frac{\dalpha}{\vert \alpha \vert}
&=\int_{\la\alpha\ra\le 1} 
\la\alpha\ra^\delta \frac{\Vert \delta_\alpha f\Vert_{L^{2}(\xR;\dx)}}{ \vert{\alpha}\vert^{s+\delta} } \frac{\dalpha}{\vert \alpha \vert}\\
&\le\left(\int_{\la\alpha\ra\le 1} 
\la\alpha\ra^{2\delta}\frac{\dalpha}{\vert \alpha \vert}\right)^\mez
\left(\int_{\xR}\frac{\Vert \delta_\alpha f\Vert_{L^{2}(\xR;\dx)}^2}{ \vert{\alpha}\vert^{2(s+\delta)} } 
\frac{\dalpha}{\vert \alpha \vert}\right)^\mez\\
&\le C(\delta)\lA f\rA_{\dot{B}^s_{2,2}}=C(\delta,s)\lA f\rA_{\dot{H}^{s+\delta}},
\end{align*}
and similarly
$$
\int_{\la\alpha\ra\ge 1} 
 \frac{\Vert \delta_\alpha f\Vert_{L^{2}(\xR;\dx)}}{ \vert{\alpha}\vert^{s} } \frac{\dalpha}{\vert \alpha \vert}
\le C'(\delta,s)\lA f\rA_{\dot{H}^{s-\delta}},
$$
which gives the  result.
\end{proof}
As an example of properties which are very simple to prove using 
the definition of Besov semi-norms in terms of finite differences, 
let us prove the first point in Theorem~\ref{T1}. Recall that, by notation,
$$
\mathcal{T}(f)g=-\frac{1}{\pi}\int \Delta_\alpha g_x \ \frac{(\Delta_\alpha f)^2}{1+(\Delta_\alpha f)^2}\dalpha,
$$
where $g_x\defn \partial_x g$.

\begin{prop}\label{P:continuity}
\begin{enumerate}[i)]
\item\label{Prop:low1} For all $f$ in $\dot{H}^{1}(\xR)$ and all $g$ in $\dot{H}^{\tdm}(\xR)$, 
the function
$$
\alpha\mapsto \Delta_\alpha g_x \ \frac{(\Delta_\alpha f)^2}{1+(\Delta_\alpha f)^2}
$$
belongs to $L^1_\alpha(\xR;L^2_x(\xR))$. Consequently, $\mathcal{T}(f)g$ belongs to $L^2(\xR)$. 
Moreover, there is a constant $C$ such that
\be\label{nTL2}
\lA \mathcal{T}(f)g\rA_{L^2}\le C \lA f\rA_{\dot{H}^1}\lA g\rA_{\dot{H}^\tdm}.
\ee

\item \label{Prop:low3} For all $\delta\in [0,1/2)$, there exists a constant $C>0$ such that, 
for all functions $f_1,f_2$ in $\dot{H}^{1-\delta}(\xR)\cap \dot{H}^{\tdm+\delta}(\xR)$, 
$$
\lA (\mathcal{T}(f_1)-\mathcal{T}(f_2))f_2\rA_{L^2}
\le C \lA f_1-f_2\rA_{\dot{H}^{1-\delta}}\lA f_2\rA_{\dot{H}^{\tdm+\delta}}.
$$

\item \label{Prop:low2} The map $f\mapsto \mathcal{T}(f)f$ is locally Lipschitz from 
$\dot{H}^1(\xR)\cap \dot{H}^{\tdm}(\xR)$ to $L^2(\xR)$.
\end{enumerate}
\end{prop}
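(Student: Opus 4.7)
The plan is to bound all three estimates by reducing the integrand to finite differences and then applying Minkowski/Hölder in $x$, followed by a weighted Cauchy--Schwarz in $\alpha$ tuned so that the two resulting factors are exactly Besov seminorms that the embeddings of Section~\ref{S:preliminaries} convert into the prescribed $\dot{H}^\sigma$ norms.

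For statement~(\ref{Prop:low1}), set $\phi(y)=y^2/(1+y^2)$ and start from $\mathcal{T}(f)g(x)=-\pi^{-1}\int \Delta_\alpha g_x(x)\,\phi(\Delta_\alpha f(x))\dalpha$. The elementary bound $\phi(y)\le |y|$ gives the pointwise majorant $|\mathcal{T}(f)g(x)|\le\pi^{-1}\int|\Delta_\alpha g_x(x)||\Delta_\alpha f(x)|\dalpha$; Minkowski in $L^2_x$ followed by Hölder then yields
$$\lA \mathcal{T}(f)g\rA_{L^2}\les \int \lA\Delta_\alpha g_x\rA_{L^2}\lA\Delta_\alpha f\rA_{L^\infty}\dalpha.$$
Writing $\Delta_\alpha u=\delta_\alpha u/\alpha$ and applying Cauchy--Schwarz against the measure $\dalpha/|\alpha|$ with the symmetric splitting $|\alpha|^{-1/2}\cdot|\alpha|^{-1/2}$ identifies the right-hand side as $\lA g_x\rA_{\dot{B}^{1/2}_{2,2}}\,\lA f\rA_{\dot{B}^{1/2}_{\infty,2}}$. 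The first factor is comparable to $\lA g\rA_{\dot{H}^{3/2}}$ by the equivalence recalled in~\eqref{SE0}, and the second is controlled by $\lA f\rA_{\dot{H}^1}$ via the Sobolev embedding~\eqref{SE}, proving~\eqref{nTL2}.

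For~(\ref{Prop:low3}), the key algebraic identity is
$$\phi(a)-\phi(b)=\frac{(a-b)(a+b)}{(1+a^2)(1+b^2)},$$
combined with the uniform estimate $|a+b|/[(1+a^2)(1+b^2)]\le 1$, which gives $|\phi(a)-\phi(b)|\le|a-b|$. Specialising to $a=\Delta_\alpha f_1$, $b=\Delta_\alpha f_2$ produces
$$|(\mathcal{T}(f_1)-\mathcal{T}(f_2))f_2(x)|\les \int |\Delta_\alpha (f_2)_x(x)|\,|\Delta_\alpha(f_1-f_2)(x)|\dalpha,$$
and I then rerun the argument of~(\ref{Prop:low1}), but now distribute the two $|\alpha|^{-1/2}$ weights asymmetrically as $|\alpha|^{-1/2-\delta}$ and $|\alpha|^{-1/2+\delta}$. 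The resulting factors are $\lA (f_2)_x\rA_{\dot{B}^{1/2+\delta}_{2,2}}\sim \lA f_2\rA_{\dot{H}^{3/2+\delta}}$ and $\lA f_1-f_2\rA_{\dot{B}^{1/2-\delta}_{\infty,2}}\les \lA f_1-f_2\rA_{\dot{H}^{1-\delta}}$, where the constraint $\delta<1/2$ is precisely what keeps the second Besov index positive so that~\eqref{SE} still applies.

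Part~(\ref{Prop:low2}) is then immediate from the decomposition
$$\mathcal{T}(f_1)f_1-\mathcal{T}(f_2)f_2=\mathcal{T}(f_1)(f_1-f_2)+(\mathcal{T}(f_1)-\mathcal{T}(f_2))f_2,$$
bounding the first summand through~(\ref{Prop:low1}) and the second through~(\ref{Prop:low3}) with $\delta=0$. The only genuinely subtle step is verifying that the crude pointwise bound $|\phi(a)-\phi(b)|\le|a-b|$ suffices, rather than a sharper one like $|\phi(a)-\phi(b)|\le|a-b|\min(|a|+|b|,1)$: the Besov machinery is already strong enough to produce the needed gain, at the price of discarding the parabolic cancellation built into~\eqref{nM3}---which is exactly why this is only a ``low-frequency'' estimate, and why the finer structure of $\mathcal{T}(f)g$ must be extracted separately in part~(\ref{T1:high}) of Theorem~\ref{T1}.
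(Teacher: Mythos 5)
Your argument is correct and essentially identical to the paper's: the bound $\phi(\Delta_\alpha f)\le|\Delta_\alpha f|$ (respectively the factorisation $\phi(a)-\phi(b)=(a-b)(a+b)/[(1+a^2)(1+b^2)]$ with the uniform bound $\le|a-b|$, which is exactly the paper's $M(\alpha,x)$ and $|M|\le 1$), followed by Minkowski, a symmetric (resp.\ $\delta$-shifted asymmetric) distribution of the $\alpha$-weights, Cauchy--Schwarz against $\dalpha/|\alpha|$, and the conversion $\dot B^{s}_{2,2}\sim\dot H^s$ together with the embedding~\eqref{SE}. The decomposition used for part~\ref{Prop:low2}) is also the same.
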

\begin{proof}
$\ref{Prop:low1})$ Since
$$
\lA \Delta_\alpha g_x  \frac{(\Delta_\alpha f)^2}{1+(\Delta_\alpha f)^2}
\rA_{L^2}\le \lA \Delta_\alpha g_x\rA_{L^2}\lA \Delta_\alpha f\rA_{L^\infty}
=\frac{\lA \delta_\alpha g_x\rA_{L^2}}{\la \alpha\ra}\frac{\lA \delta_\alpha f\rA_{L^\infty}}{\la\alpha\ra},
$$
by using the Cauchy-Schwarz inequality and the definition~\e{Bs<1} of 
the Besov semi-norms one finds that
\begin{align*}
\lA \mathcal{T}(f)g\rA_{L^2}
&\le \frac{1}{\pi}\int \frac{\lA \delta_\alpha g_x\rA_{L^2}}{\la \alpha\ra}
\frac{\lA \delta_\alpha f\rA_{L^\infty}}{\la\alpha\ra}
\dalpha\\
&\le \frac{1}{\pi}\int 
\frac{\lA \delta_\alpha g_x\rA_{L^2}}{\la \alpha\ra^{1/2}}
\frac{\lA \delta_\alpha f\rA_{L^\infty}}{\la\alpha\ra^{1/2}}
\frac{\dalpha}{\la\alpha\ra}\\
&\le \frac{1}{\pi}
\left(\int \frac{\lA \delta_\alpha g_x\rA_{L^2}^2}{\la \alpha\ra}\frac{\dalpha}{\la\alpha\ra}
\right)^\mez\left(\int\frac{\lA \delta_\alpha f\rA_{L^\infty}^2}{\la\alpha\ra}
\frac{\dalpha}{\la\alpha\ra}\right)^\mez\\
&\le \frac{1}{\pi}\lA g_x\rA_{\dot{B}^{\mez}_{2,2}}\lA f\rA_{\dot{B}^{\mez}_{\infty,2}}.
\end{align*}
Recalling that $\lA \cdot\rA_{\dot{B}^{\mez}_{2,2}}$ and 
$\lA \cdot\rA_{\dot{H}^{\mez}}$ are equivalent semi-norms, 
and using the Sobolev embedding~\e{SE}, we have
$$
\lA g_x\rA_{\dot{B}^{\mez}_{2,2}}\les \lA g\rA_{\dot{H}^{\tdm}},\quad 
\lA f\rA_{\dot{B}^{\mez}_{\infty,2}}\les \lA f\rA_{\dot{H}^1},
$$
and hence we obtain the wanted inequality~\e{nTL2}.

$\ref{Prop:low3})$ 
Write that
$$
(\mathcal{T}(f_1)-\mathcal{T}(f_2))f_2=
-\frac{1}{\pi}\int \Delta_\alpha f_{2x} \Delta_\alpha (f_1-f_2) M(\alpha,x)\dalpha
$$
where
$$
M(\alpha,x)=\frac{(\Delta_\alpha f_1)+\Delta_\alpha f_2}{(1+(\Delta_\alpha f_1)^2)(1+(\Delta_\alpha f_2)^2)}.
$$
Since $\la M(\alpha,x)\ra\le 1$, by repeating similar arguments to those used in the first part 
(balancing 
the powers of $\alpha$ in a different way), we get
\begin{align*}
\lA (\mathcal{T}(f_1)-\mathcal{T}(f_2))f_2\rA_{L^2}
&\le \frac{1}{\pi}\int \frac{\lA \delta_\alpha f_{2x}\rA_{L^2}}{\la \alpha\ra}
\frac{\lA \delta_\alpha (f_1-f_2)\rA_{L^\infty}}{\la\alpha\ra}
\dalpha\\
&\le \frac{1}{\pi}\int \frac{\lA \delta_\alpha f_{2x}\rA_{L^2}}{\la \alpha\ra^{1/2+\delta}}\frac{\lA \delta_\alpha (f_1-f_2)\rA_{L^\infty}}{\la\alpha\ra^{1/2-\delta}}
\frac{\dalpha}{\la\alpha\ra}\\
&\le \frac{1}{\pi}\lA f_{2x}\rA_{\dot{B}^{\mez+\delta}_{2,2}}\lA f_1-f_2\rA_{\dot{B}^{\mez-\delta}_{\infty,2}}.
\end{align*}
which implies
$$
\lA (\mathcal{T}(f_1)-\mathcal{T}(f_2))f_2\rA_{L^2}\le C \lA f_1-f_2\rA_{\dot{H}^{1-\delta}}\lA f_2\rA_{\dot{H}^{\tdm+\delta}}.
$$

$\ref{Prop:low2})$ 
Consider $f_1$ and $f_2$ in $\dot{H}^1(\xR)\cap \dot{H}^{\tdm}(\xR)$. 
Then
$$
\mathcal{T}(f_1)f_1-\mathcal{T}(f_2)f_2=\mathcal{T}(f_1)(f_1-f_2)+(\mathcal{T}(f_1)-\mathcal{T}(f_2))f_2.
$$
Then~\e{nTL2} implies that the $L^2$-norm of the 
first term is bounded by
$$
C\lA f_1\rA_{\dot{H}^1}\lA f_1-f_2\rA_{\dot{H}^{\tdm}}.
$$ 
We estimate the second term 
by using $\ref{Prop:low3})$ applied with $\delta=0$. It follows that
$$
\lA \mathcal{T}(f_1)f_1-\mathcal{T}(f_2)f_2\rA_{L^2}\les 
\big(\lA f_1\rA_{\dot{H}^1\cap\dot{H}^{\tdm}}
+\lA f_2\rA_{\dot{H}^1\cap\dot{H}^{\tdm}}\big) \lA f_1-f_2\rA_{\dot{H}^1\cap\dot{H}^{\tdm}},
$$
which completes the proof.
\end{proof}
We gather in the following proposition 
the nonlinear estimates which will be needed.

\begin{prop}\label{P:embeddings}
\begin{enumerate}[i)]
\item \label{nonlinear-i}
Let $s\in (0,1)$, then $L^\infty(\xR)\cap \dot{H}^{s}(\xR)$ is an algebra. 
Moreover, for 
all $u,v$ in $L^\infty(\xR)\cap \dot{H}^s(\xR)$,
\be\label{product}
\lA uv\rA_{\dot{H}^{s}}\le 2 \lA u\rA_{L^\infty}\lA v\rA_{\dot{H}^{s}}+2\lA v\rA_{L^\infty}\lA u\rA_{\dot{H}^{s}}.
\ee

\item \label{nonlinear-ii}  Consider a $C^\infty$ function $F\colon \xR\rightarrow \xR$ satisfying
$$
\forall (x,y)\in \xR^2,\qquad \la F(x)-F(y)\ra \le K \la x-y\ra.
$$
Then, for all $s\in (0,1)$ and all $u \in \dot{H}^{s}(\xR)$, 
one has $F(u)\in \dot{H}^{s}(\xR)$ together with the estimate
\be\label{comp}
\lA F(u)\rA_{\dot{H}^s}\le K\lA u\rA_{\dot{H}^s}.
\ee

\item \label{nonlinear-iii}Consider a $C^\infty$ function $F\colon \xR\rightarrow \xR$ and 
a real number $\sigma$ in $(1,2)$. Then, there exists a 
non-decreasing function $\mathcal{F}\colon \xR\rightarrow \xR$ such that, for 
all $u \in \dot{H}^{\sigma-1}(\xR)\cap \dot{H}^{\sigma}(\xR)$ 
one has $F(u)\in \dot{H}^{\sigma}(\xR)$ together with the estimate
\be\label{comp2}
\lA F(u)\rA_{\dot{H}^\sigma}\le \mathcal{F}(\lA u\rA_{L^\infty})
\Big(\lA u\rA_{\dot{H}^{\sigma-1}}+\lA u\rA_{\dot{H}^{\sigma}}\Big).
\ee
\end{enumerate}
\end{prop}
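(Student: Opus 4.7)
The plan is to reduce the $\dot H^\sigma$-estimate to a product estimate in $\dot H^{\sigma-1}$ by differentiating: since $\sigma > 1$ one has
\[
\|F(u)\|_{\dot H^\sigma} = \|\partial_x F(u)\|_{\dot H^{\sigma-1}} = \|F'(u)\, u_x\|_{\dot H^{\sigma-1}},
\]
and $\sigma - 1 \in (0,1)$, so the target norm is accessible through the finite-difference characterization \eqref{Bs<1}, namely $\|g\|_{\dot H^{\sigma-1}}^2 \sim \int \|\delta_\alpha g\|_{L^2}^2 |\alpha|^{-(2\sigma-1)}\dalpha$. Applied with $g = F'(u) u_x$, the finite difference splits as
\[
\delta_\alpha\big(F'(u) u_x\big)(x) = F'(u(x))\, \delta_\alpha u_x(x) + \delta_\alpha(F'(u))(x)\, u_x(x-\alpha).
\]

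The first piece is the easy contribution: the smoothness of $F$ and the boundedness of $u$ yield $\|F'(u)\|_{L^\infty} \le \mathcal F(\|u\|_{L^\infty})$, so after squaring and integrating against the Besov weight one gets the clean bound $\mathcal F(\|u\|_{L^\infty})\|u_x\|_{\dot H^{\sigma-1}} = \mathcal F(\|u\|_{L^\infty})\|u\|_{\dot H^\sigma}$, exactly as desired.

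The main obstacle is the second piece $\delta_\alpha(F'(u)) \cdot u_x(\cdot-\alpha)$, because $u_x$ lies only in $\dot H^{\sigma-1}$ and, in general, not in $L^\infty$, so part \ref{nonlinear-i} does not apply directly with $u_x$ as a factor. My plan is first to exploit the Lipschitz character of $F'$ on the range $[-M, M]$ of $u$, $M = \|u\|_{L^\infty}$: extending $F'$ to a globally Lipschitz function with constant $\|F''\|_{L^\infty([-M,M])}$ outside $[-M, M]$, the pointwise counterpart of part \ref{nonlinear-ii} gives
\[
|\delta_\alpha F'(u)(x)| \le \mathcal F(\|u\|_{L^\infty})\,|\delta_\alpha u(x)|.
\]
It then suffices to control the quantity
\[
\int \big\|\delta_\alpha u \cdot u_x(\cdot-\alpha)\big\|_{L^2}^2\, |\alpha|^{-(2\sigma-1)}\dalpha
\]
by $(\|u\|_{\dot H^{\sigma-1}} + \|u\|_{\dot H^\sigma})^2$, up to a multiplicative constant.

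For this last step I would use Hölder with a pair of conjugate exponents $(p, q)$ satisfying $1/p + 1/q = 1/2$, placing $u_x$ in $L^p(\xR)$ via the Sobolev embedding $\dot H^{\sigma-1}(\xR) \hookrightarrow L^p(\xR)$ (which gives $1/p = 3/2 - \sigma$ when $\sigma \in (1, 3/2)$; for $\sigma \in (3/2, 2)$, Lemma~\ref{L:B21} applied to $u_x\in\dot H^{\sigma-2}\cap \dot H^{\sigma-1}$ together with the standard inclusion $\dot B^{1/2}_{2,1}(\xR) \hookrightarrow L^\infty(\xR)$ yields the stronger bound $u_x \in L^\infty$), and placing $\delta_\alpha u$ in a companion $L^q$-Besov-type norm $\dot B^\tau_{q,2}$. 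Integration against the Besov weight $|\alpha|^{-(2\sigma-1)}\dalpha$ reproduces this Besov norm, which a final Sobolev embedding converts back to an $L^2$-based Sobolev norm controlled by $\|u\|_{\dot H^{\sigma-1}} + \|u\|_{\dot H^\sigma}$ via interpolation on the chain $\dot H^{\sigma-1}\cap \dot H^\sigma$. The delicate point is checking that the chosen $(p, q, \tau)$ remain in their admissible ranges throughout, which is conveniently handled by splitting the argument into the two subcases $\sigma \in (1, 3/2]$ and $\sigma \in (3/2, 2)$.
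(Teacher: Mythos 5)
Your starting point (differentiate, then split the finite difference of $F'(u)\,u_x$ and use part~\ref{nonlinear-ii} on $\delta_\alpha F'(u)$) is a natural one and the treatment of the first piece $F'(u)\,\delta_\alpha u_x$ is correct. However, the estimate you obtain for the second piece does not close: it is quadratic, not linear, in the Sobolev semi-norms, and this is a genuine gap, not a bookkeeping issue.

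Concretely, after H\"older and Sobolev you arrive at a bound of the shape
\[
\mathcal F(\lA u\rA_{L^\infty})\,\lA u_x\rA_{L^p}\,\lA u\rA_{\dot B^{\sigma-1}_{q,2}}
\lesssim \mathcal F(\lA u\rA_{L^\infty})\,\lA u\rA_{\dot H^{\sigma}}\,\lA u\rA_{\dot H^{\tau}}
\]
for some $\tau\in[\sigma-1,\sigma]$. This is controlled by
$\mathcal F(\lA u\rA_{L^\infty})\bigl(\lA u\rA_{\dot H^{\sigma-1}}+\lA u\rA_{\dot H^{\sigma}}\bigr)^2$,
which is strictly weaker than the claimed estimate~\eqref{comp2}. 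A scaling count confirms the obstruction is structural: for $u_\lambda(x)=u(\lambda x)$ one has
$\lA (u_\lambda)_x\rA_{L^p}=\lambda^{1-1/p}\lA u_x\rA_{L^p}$ while $\lA u_\lambda\rA_{L^\infty}=\lA u\rA_{L^\infty}$, so in any Gagliardo--Nirenberg interpolation
$\lA u_x\rA_{L^p}\le C\lA u\rA_{L^\infty}^{1-\theta}\lA u\rA_{\dot H^\sigma}^\theta$
the exponent is forced to be $\theta=(1-1/p)/(\sigma-\mez)$, which equals $1$ exactly at the endpoint $1/p=3/2-\sigma$ you are using. No interpolation with the $L^\infty$-norm can lower the total Sobolev homogeneity of your two factors below $2$. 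Using $\lA u_x\rA_{L^\infty}$ instead (for $\sigma>3/2$) has the same defect, since $\lA u_x\rA_{L^\infty}$ is not controlled by $\lA u\rA_{L^\infty}$.

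This is precisely the reason the paper's proof passes through a Littlewood--Paley decomposition rather than finite differences. Writing $F(u)=F(S_0u)+\sum_{j\geq0}m_j\Delta_j\tilde u$, the high-frequency sum is handled by Meyer's multiplier lemma with linear homogeneity $\mathcal F(\lA u\rA_{L^\infty})\lA u\rA_{\dot H^\sigma}$, and the product rule~\eqref{product} is applied \emph{only} to the low-frequency block $F(S_0u)$. There the decisive step is Bernstein's inequality $\lA \partial_x S_0 u\rA_{L^\infty}\lesssim \lA u\rA_{L^\infty}$, which is what allows $u_x$ to be absorbed into $\mathcal F(\lA u\rA_{L^\infty})$ rather than costing a Sobolev semi-norm. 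That step has no analogue in your finite-difference framework because $u$ is not frequency-localized. To repair your argument you would have to introduce a low/high frequency split of $u_x$, at which point you are essentially reproducing the paper's proof.

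A smaller remark: the pair $(p,q)$ you propose degenerates at $\sigma=3/2$ ($p=\infty$ is the forbidden endpoint of the Sobolev embedding $\dot H^{1/2}\hookrightarrow L^p$), so the boundary between your two subcases is not covered; one would need a perturbed exponent there. But this is secondary to the homogeneity problem above.
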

\begin{remark}
$i)$ 
The inequality~\e{product} is the classical Kato--Ponce estimate~(\cite{KP}). 
We will use it only when $0<s<1$, for which one has a straightforward proof (see below). 

$ii)$ Statement $\ref{nonlinear-ii})$ is also elementary and classical (see~\cite{BourdaudMeyer}). 
Notice that~\e{comp2} is a sub-linear estimate, which means that the 
constant $K$ depends only on $F$ and not on $u$ (which is false in general for $s>1$). 

$iii)$ The usual estimate for composition implies that 
$$
\lA F(u)\rA_{\dot{H}^\sigma}\le \mathcal{F}(\lA u\rA_{L^\infty})\lA u\rA_{L^2\cap \dot{H}^{\sigma}}.
$$
The bound \e{comp2} improves the latter estimate 
in that one requires less control of the low frequency component. 
This will play a role in the proof of Lemmas~\ref{Lemma2.2} and~\ref{Lemma2.3}.
\end{remark}
\begin{proof}
$\ref{nonlinear-i})$ 
Since $\delta_{\alpha}(uv)=u \delta_\alpha v +(\tau_\alpha v)\delta_\alpha u$ 
where $\tau_\alpha v(x)=v(x-\alpha)$, we have
$$
\lA \delta_\alpha (uv)\rA_{L^2}\le
\lA u\rA_{L^\infty}\lA \delta_\alpha v\rA_{L^2}+\lA v\rA_{L^\infty}\lA \delta_\alpha u\rA_{L^2}.
$$
Directly from the definition~\e{Bs<1}, we deduce that
$$
\lA uv\rA_{\dot{B}^s_{2,2}}\le 2\lA u\rA_{L^\infty}\lA v\rA_{\dot{B}^s_{2,2}}+2\lA v\rA_{L^\infty}\lA  u\rA_{\dot{B}^s_{2,2}}.
$$
This implies \e{product} by virtue of the identity~\e{SE0} 
on the equivalence of $\lA \cdot\rA_{\dot{H}^s}$ 
and $\lA \cdot\rA_{\dot{B}^s_{2,2}}$.

$\ref{nonlinear-ii})$ Similarly, the inequality \e{comp} follows directly from the fact that 
$$
\lA \delta_\alpha F(u)\rA_{L^2} \le K \lA \delta_\alpha u\rA_{L^2}.
$$

$\ref{nonlinear-iii})$ 
We adapt the classical proof 
of the composition rule in nonhomogeneous Sobolev spaces, 
which is based on the Littlewood-Paley decomposition. 
Namely, choose a function 
$\Phi\in C^\infty_0(\{\xi;\la\xi\ra < 1\})$ which is equal to $1$ when $|\xi|\le 1/2$ and set 
$\phi(\xi)=\Phi(\xi/2)-\Phi(\xi)$ which is supported in the annulus $\{\xi;1/2\le |\xi|\le 2\}$. 
Then, for all $\xi\in\xR$, one has $\Phi(\xi)+\sum_{j\in \xN}\phi(2^{-j}\xi)=1$, 
which one can use to decompose tempered distribution. 
For $u\in \mathcal{S}'(\xR)$, we set $\Delta_{-1}u=\mathcal{F}^{-1}(\Phi(\xi)\widehat{u})$ 
and 
$\Delta_j u=\mathcal{F}^{-1}(\phi(2^{-j}\xi)\widehat{u})$ for $j\in\xN$. 
We also use the notation $S_j u =\sum_{-1\le p\le j-1} \Delta_pu$ for $j\ge 0$ (so that $S_0u=\Delta_{-1}u=\Phi(D_x)u$). 

The classical proof (see~\cite{AlinhacGerard,BCD,PGLR}) of the composition rule consists in 
splitting $F(u)$ as 
\begin{align*}
F(u)&=F(S_0 u)+F(S_1 u)-F(S_0u)+\cdots+F(S_{j+1}u)-F(S_ju)+\cdots\\
&=F(S_0u)+\sum_{j\in\xN} m_j \Delta_j u
\qquad \text{with}\quad m_j=\int_0^1 F'(S_ju+y\Delta_j u)\dy\\
&=F(S_0u)+\sum_{j\in\xN} m_j \Delta_j \tilde{u} \qquad \text{with}\quad\tilde{u}=u-\Phi(2D_x)u,
\end{align*}
where we used $\Delta_j \circ \Phi(2D_x)=0$ for $j\ge 0$. 
Then, the Meyer's multiplier 
lemma (see~\cite[Theorem 2]{MeyerBourbaki} or \cite[Lemma 2.2]{AlinhacGerard}) implies that 
$$
\Big\lVert \sum_{j\ge 0} m_j \Delta_j \tilde{u}\Big\rVert_{H^\sigma}
\le \mathcal{F}(\lA u\rA_{L^\infty})\lA \tilde{u}\rA_{H^\sigma}, 
$$
where, to clarify notations, we insist on the fact that above $H^\sigma$ is the nonhomogeneous Sobolev space. 
Since $\lA \tilde{u}\rA_{H^\sigma}\le \lA u\rA_{\dot{H}^\sigma}$, we see that 
the contribution of $\sum m_j \Delta_j \tilde{u}$ 
is bounded by the right-hand side of \e{comp2}. 
This shows that the only difficulty is to estimate the low frequency component $F(S_0u)$. 
We claim that
\be\label{ClaimLF}
\lA F(S_0u)\rA_{\dot{H}^\sigma}\le \mathcal{F}(\lA u\rA_{L^\infty})\lA u\rA_{\dot{H}^{\sigma-1}}.
\ee
To see this, we start with
$$
\lA F(S_0u)\rA_{\dot{H}^\sigma}
=\lA \partial_x \big(F(S_0u)\big)\rA_{\dot{H}^{\sigma-1}}
=\lA F'(S_0u)\partial_x S_0u \rA_{\dot{H}^{\sigma-1}},
$$
and then use the product rule~\e{product} with $s=\sigma-1\in (0,1)$,
\begin{align*}
\lA F'(S_0u)\partial_x S_0u \rA_{\dot{H}^{\sigma-1}}
&\le 
2\lA F'(S_0u)\rA_{L^\infty} \lA \partial_x S_0u \rA_{\dot{H}^{\sigma-1}}\\
&\quad+2\lA F'(S_0u)\rA_{\dot{H}^{\sigma-1}}\lA \partial_x S_0u \rA_{L^\infty}.
\end{align*}
Since $\la \xi\Phi(\xi)\ra\le 1$ one has the obvious inequality
$$
\lA \partial_x S_0u \rA_{\dot{H}^{\sigma-1}}\le 
\lA u \rA_{\dot{H}^{\sigma-1}}.
$$
On the other hand, since the support of the Fourier 
transform of $S_0u$ is included in the ball of center $0$ and radius $1$, it follows from the 
Bernstein's inequality that
$$
\lA S_0u \rA_{L^\infty}\le C_1\lA u\rA_{L^\infty},\qquad 
\lA \partial_x S_0u \rA_{L^\infty}\le C_2 \lA u\rA_{L^\infty}.
$$
The first estimate above also implies that
$$
\lA F'(S_0u)\rA_{L^\infty}\le \mathcal{F}_1(\lA S_0 u\rA_{L^\infty})\le \mathcal{F}_2(\lA u\rA_{L^\infty})
$$
where $\mathcal{F}_1(r)= \sup_{y\in [-r,r]}\la F'(y)\ra$ and $\mathcal{F}_2(r)=\mathcal{F}_1(C_1r)$. 
It thus remains only to estimate $\lA F'(S_0u)\rA_{\dot{H}^{\sigma-1}}$. 
Notice that we may apply the composition rule 
given in statement~$\ref{nonlinear-ii})$ since the index $\sigma-1$ belongs to $(0,1)$ 
and since $F'$ is Lipschitz on an open set containing $S_0u(\xR)$. 
The composition rule \e{comp} implies that
$$
\lA F'(S_0u)\rA_{\dot{H}^{\sigma-1}}\le K \lA S_0u\rA_{\dot{H}^{\sigma-1}}\le K \lA u\rA_{\dot{H}^{\sigma-1}}
$$
with
$$
K=\sup_{[-2\lA S_0u\rA_{L^\infty},2\lA S_0u\rA_{L^\infty}]}\la F''\ra\le \mathcal{F}_3(\lA u\rA_{L^\infty}).
$$
This proves that the $\dot{H}^\sigma$-norm of $F(S_0u)$ satisfies \e{ClaimLF} and hence 
it is bounded by the right-hand side of \e{comp2}, which completes the proof of statement 
$\ref{nonlinear-iii})$.
\end{proof}

For later purposes, we prove the following commutator estimate with the Hilbert transform.

\begin{lemma}\label{L:Hilbert}
Let~$0<\theta<\nu<1$. There exists a constant~$K$ 
such that for all $f\in C^{0,\nu}(\xR)$, and all $u$ in the nonhomogeneous space 
$H^{-\theta}(\xR)$,
\begin{equation}\label{commutator:estimateusual}
\left\lVert\mathcal{H}(fu)-f\mathcal{H} u\right\rVert_{L^2}
\le K\left\lVert f\right\rVert_{C^{0,\nu}}
\left\lVert u\right\rVert_{H^{-\theta}}.
\end{equation}
\end{lemma}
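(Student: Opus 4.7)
My plan is to reduce to a dual estimate and then exploit the finite-difference characterization of $\dot H^\theta$. Since $\mathcal{H}^\etoile=-\mathcal{H}$ and $f$ is real, the commutator $[\mathcal{H},f]\defn \mathcal{H}\circ f-f\circ\mathcal{H}$ is formally self-adjoint on $L^2$, so by the usual $H^{-\theta}$--$H^{\theta}$ duality
$$
\lA[\mathcal{H},f]u\rA_{L^2}=\sup_{\lA\phi\rA_{L^2}=1}\la\langle u,[\mathcal{H},f]\phi\rangle\ra
\le \lA u\rA_{H^{-\theta}}\,\sup_{\lA\phi\rA_{L^2}=1}\lA[\mathcal{H},f]\phi\rA_{H^{\theta}}.
$$
It is therefore enough to establish the dual statement $\lA[\mathcal{H},f]\phi\rA_{H^{\theta}}\le K\lA f\rA_{C^{0,\nu}}\lA\phi\rA_{L^2}$ for every $\phi\in L^{2}$. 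The $L^2$-part of the right-hand side is free: starting from the finite-difference form
$$
[\mathcal{H},f]\phi(x)=-\frac{1}{\pi}\pv\int\frac{\delta_\alpha f(x)}{\alpha}\phi(x-\alpha)\dalpha,
$$
the $L^2$-boundedness of $\mathcal{H}$ gives $\lA[\mathcal{H},f]\phi\rA_{L^2}\le 2\lA f\rA_{L^\infty}\lA \phi\rA_{L^2}$.

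For the $\dot H^\theta$-part I use $\lA g\rA_{\dot H^\theta}^2\simeq \int\lA\delta_\beta g\rA_{L^2}^2\la\beta\ra^{-1-2\theta}d\beta$ together with the algebraic identity (which follows by expanding $\delta_\alpha f(x)\phi(x-\alpha)-\delta_\alpha f(x-\beta)\phi(x-\beta-\alpha)$ in the integrand):
$$
\delta_\beta[\mathcal{H},f]\phi=[\mathcal{H},\delta_\beta f]\phi+[\mathcal{H},\tau_\beta f]\delta_\beta\phi,
\qquad \tau_\beta f(x)\defn f(x-\beta).
$$
The first summand is straightforward: $\lA[\mathcal{H},\delta_\beta f]\phi\rA_{L^2}\le 2\lA\delta_\beta f\rA_{L^\infty}\lA\phi\rA_{L^2}\le 2\lA f\rA_{C^{0,\nu}}\min(\la\beta\ra^\nu,1)\lA\phi\rA_{L^2}$, and the weight $\int\min(\la\beta\ra^{2\nu},1)\la\beta\ra^{-1-2\theta}d\beta$ converges exactly when $0<\theta<\nu<1$.

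The delicate piece is $[\mathcal{H},\tau_\beta f]\delta_\beta\phi$, because the crude bound $2\lA f\rA_{L^\infty}\lA\delta_\beta\phi\rA_{L^2}$ does not vanish as $\beta\to0$ for generic $\phi\in L^2$. For $\la\beta\ra\ge 1$ it already suffices, since $\int_{\la\beta\ra\ge 1}\la\beta\ra^{-1-2\theta}d\beta<\infty$. For $\la\beta\ra\le 1$ I split the inner $\alpha$-integration at $\la\alpha\ra=\la\beta\ra$. On $\la\alpha\ra\le\la\beta\ra$, the Hölder bound $\la\delta_\alpha\tau_\beta f(x)/\alpha\ra\le \lA f\rA_{C^{0,\nu}}\la\alpha\ra^{\nu-1}$ combined with Minkowski and $\lA\delta_\beta\phi\rA_{L^2}\le 2\lA\phi\rA_{L^2}$ produces, after integration in $\alpha$, the desired factor $\la\beta\ra^\nu$. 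On $\la\alpha\ra\ge\la\beta\ra$, I write $\delta_\beta\phi(y-\alpha)=\phi(y-\alpha)-\phi(y-\alpha-\beta)$, perform the change of variables $\alpha\mapsto\alpha-\beta$ in the second integral, and then use the two algebraic identities
$$
\delta_\alpha f(y-\beta)-\delta_{\alpha-\beta}f(y-\beta)=\delta_\beta f(y-\alpha),\qquad
\frac{1}{\alpha}-\frac{1}{\alpha-\beta}=-\frac{\beta}{\alpha(\alpha-\beta)}
$$
to recombine the two integrals. The pointwise bound on the resulting kernel is, after using Hölder continuity once more, of size $\lA f\rA_{C^{0,\nu}}\la\beta\ra^\nu/\la\alpha\ra+\lA f\rA_{C^{0,\nu}}\la\beta\ra\la\alpha\ra^{\nu-2}$, and an additional integration against $\phi$ yields $\lA[\mathcal{H},\tau_\beta f]\delta_\beta\phi\rA_{L^2}\les\lA f\rA_{C^{0,\nu}}\la\beta\ra^\nu\log(e/\la\beta\ra)\lA\phi\rA_{L^2}$. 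Both contributions are then integrable against $\la\beta\ra^{-1-2\theta}d\beta$ precisely under the hypothesis $\theta<\nu$.

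The main obstacle is this last step: the naive $L^\infty\times L^2$ bound on $[\mathcal{H},\tau_\beta f]\delta_\beta\phi$ does not see $\beta$ at all, so one must unpack the commutator into its integral representation and exhibit the cancellation between the $\phi$-translates with the help of the two identities above, in order to recover a $\la\beta\ra^\nu$-gain strong enough to beat the Besov weight $\la\beta\ra^{-1-2\theta}$ at the origin.
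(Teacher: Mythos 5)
Your argument is correct and takes a genuinely different route from the paper's. The paper proves this lemma with Bony's paradifferential machinery: it introduces a smooth Fourier-multiplier proxy $\mathcal{H}_0$ for $\mathcal{H}$, decomposes the multiplication operator $f^{\flat}$ as a paraproduct $T_f$ plus a Littlewood--Paley remainder, and then invokes three standard paradifferential facts, namely $\lVert T_f\rVert_{H^\sigma\to H^\sigma}\lesssim\lVert f\rVert_{L^\infty}$, the commutator estimate $\lVert[\mathcal{H}_0,T_f]\rVert_{H^\sigma\to H^{\sigma+\nu}}\lesssim\lVert f\rVert_{C^{0,\nu}}$, and a direct bound on $f^{\flat}-T_f$. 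You instead dualize, using that $[\mathcal{H},f]$ is self-adjoint on $L^2$ for real $f$, to reduce the claim to $\lVert[\mathcal{H},f]\phi\rVert_{H^{\theta}}\lesssim\lVert f\rVert_{C^{0,\nu}}\lVert\phi\rVert_{L^2}$, and then control the $\dot H^{\theta}$-seminorm with the finite-difference characterization of Besov spaces, the finite-difference Leibniz identity $\delta_\beta[\mathcal{H},f]\phi=[\mathcal{H},\delta_\beta f]\phi+[\mathcal{H},\tau_\beta f]\delta_\beta\phi$, and an explicit physical-space cancellation in the kernel of the second summand. This keeps the whole proof in the same finite-difference language as the rest of the paper and is arguably more faithful to the paper's announced goal of avoiding general paradifferential calculus; ironically, this lemma is the one spot where the paper does resort to it. Two technical points, neither of which threatens the argument: first, the kernel piece of size $\lVert f\rVert_{C^{0,\nu}}\lvert\beta\rvert^{\nu}/\lvert\alpha\rvert$ produced by the cancellation is not dominated by an $L^1_\alpha$ function on $\{\lvert\alpha\rvert\ge\lvert\beta\rvert\}$, so the ``additional integration against $\phi$'' there must really be the $L^2$-boundedness of the truncated Hilbert transform applied to $\delta_\beta f\cdot\phi$ — which in fact yields the clean bound $\lVert f\rVert_{C^{0,\nu}}\lvert\beta\rvert^{\nu}\lVert\phi\rVert_{L^2}$ without the $\log(e/\lvert\beta\rvert)$ loss you concede (a loss that would in any case be absorbed since $\theta<\nu$ is strict); second, the hypothesis $\nu<1$ plays no role in the $\beta$-integrability you check, only $0<\theta<\nu$ is used.
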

\begin{proof}
We establish this estimate 
by using the para-differential calculus of Bony~\cite{Bony}. 
We use the Littlewood-Paley decomposition (see the proof of Proposition~\ref{P:embeddings}) 
and denote by~$T_f$ the operator of para-multiplication by~$f$, so that 
$$
T_f u=\sum_{j\ge 1}S_{j-1}(f)\Delta_j u.
$$
Denote by $f^{\flat}$ the multiplication operator $ u\mapsto fu$ and 
introduce $\mathcal{H}_0$, the Fourier multiplier with symbol 
$-i(1-\Phi(\xi))\xi/\la\xi\ra$ where $\Phi\in C^\infty_0(\xR)$ is such that 
$\Phi(\xi)=1$ on a neighborhood of the origin. With these notations, one can 
rewrite the commutator~$\left[ \mathcal{H}, f^{\flat} \right]$ as
\begin{align}
\left[ \mathcal{H}, f^{\flat} \right]
&=\left[\mathcal{H},T_{f}\right]+\mathcal{H}(f^{\flat}-T_{f})-(f^{\flat}-T_{f})\mathcal{H}\notag\\
&=\left[\mathcal{H}_0,T_{f}\right]+(\mathcal{H}-\mathcal{H}_0)T_{f}
-T_f(\mathcal{H}-\mathcal{H}_0)+\mathcal{H}(f^{\flat}-T_{f})-(f^{\flat}-T_{f})\mathcal{H}.\label{rhs:para}
\end{align}
Notice that $\mathcal{H}-\mathcal{H}_0$ is a smoothing operator (that is an operator bounded 
from $H^\sigma$ to $H^{\sigma+t}$ for any real numbers $\sigma,t\in\mathbb{R}$). 
We then use two classical estimates for paradifferential operators (see~\cite{Bony,MePise}). 
Firstly, 
$$
\forall \sigma\in\mathbb{R},\quad 
\left\lVert T_f\right\rVert _{{H^{\sigma}}\rightarrow{H^{\sigma}}}\le 
c(\sigma)\left\lVert f\right\rVert_{L^\infty},
$$
so
\begin{align*}
\lA (\mathcal{H}-\mathcal{H}_0)T_{f}\rA_{H^{-\theta}\rightarrow L^2}&\le 
\lA \mathcal{H}-\mathcal{H}_0\rA_{H^{-\theta}\rightarrow L^2}\lA T_{f}\rA_{H^{-\theta}\rightarrow H^{-\theta}}\les 
\lA f\rA_{L^\infty},\\
\lA T_{f}(\mathcal{H}-\mathcal{H}_0)\rA_{H^{-\theta}\rightarrow L^2}&\le 
\lA T_{f}\rA_{L^2\rightarrow L^{2}}
\lA \mathcal{H}-\mathcal{H}_0\rA_{H^{-\theta}\rightarrow L^2}\les 
\lA f\rA_{L^\infty}.
\end{align*}
Secondly, since $\mathcal{H}_0$ is a Fourier multiplier 
whose symbol is a smooth function of order~$0$ (which means that its 
$k$th derivative is bounded 
by $C_k(1+|\xi|)^{-k}$), one has
$$
\forall \sigma\in\mathbb{R},\quad
\left\lVert\left[\mathcal{H}_0,T_f \right]\right\rVert _{{H^{\sigma}}\rightarrow{H^{\sigma+\nu}}}\le 
c(\nu,\sigma)\left\lVert f\right\rVert_{C^{0,\nu}}.
$$
In particular, 
$$
\left\lVert\left[\mathcal{H}_0,T_f \right]\right\rVert_{{H^{-\nu}}\rightarrow{L^2}}\les 
\left\lVert f\right\rVert_{C^{0,\nu}}.
$$
It remains only to estimate the last two terms in the right-hand side of \e{rhs:para}. We claim that
$$
\blA \mathcal{H}(f^{\flat}-T_{f})\brA_{H^{-\theta}\rightarrow L^2}+\blA (f^{\flat}-T_{f})\mathcal{H}
\brA_{H^{-\theta}\rightarrow L^2}\les\lA f\rA_{C^{0,\nu}}.
$$
Since $\mathcal{H}$ is bounded from $H^\sigma$ to itself for any $\sigma \in \xR$, it 
is enough to prove that
$$
\blA f^{\flat}-T_{f}\brA_{H^{-\theta}\rightarrow L^2}\les\lA f\rA_{C^{0,\nu}}.
$$
To do so, observe that
$$
fg-T_{f}g=\sum_{j,p\ge -1}(\Delta_j f)(\Delta_{p}g)-\sum_{-1\le j\le p-2}(\Delta_j f)(\Delta_{p}g)
=\sum_{j\ge -1} (S_{j+2}g)\Delta_{j}f.
$$
Then, using the Bernstein's inequality and the characterization of H\"older spaces in 
terms of Littlewood-Paley decomposition, it follows from the assumption $\theta<\nu$ that the series 
$\sum 2^{j(\theta-\nu)}$ converges, so
\begin{align*}
\lA fg-T_{f}g\rA_{L^2}&\le \sum \lA S_{j+2}g\rA_{L^2}\lA \Delta_{j}f\rA_{L^\infty}\\
&\les \sum 2^{\theta (j+2)}\lA g\rA_{H^{-\theta}}2^{-j\nu}\lA f\rA_{C^{0,\nu}}\les \lA g\rA_{H^{-\theta}}\lA f\rA_{C^{0,\nu}}.
\end{align*}

By combining the previous estimates, we have 
$\lA \left[ \mathcal{H}, f^{\flat} \right]\rA_{H^{-\theta}\rightarrow L^2}\les \lA f\rA_{C^{0,\nu}}$, 
which gives the result.
\end{proof}

\section{Commutator estimate}\label{S:commutator}

In this section we prove statement~$\ref{T1:commutator})$ in Theorem~\ref{T1}. 
Namely, we prove the following proposition.

\begin{prop}
Let $0<\eps<1/2$. There is a non-decreasing function $\mathcal{F}\colon \xR_+\rightarrow \xR_+$ 
such that, for all functions $f,g$ in $\cap_{\sigma\ge 1}\dot{H}^\sigma(\xR)$, 
\be\label{n3.2}
\lA\Lambda^{1+\eps}\mathcal{T}(f)g-\mathcal{T}(f)\Lambda^{1+\eps}g\rA_{L^2}
\le \mathcal{F}\big(\lA f\rA_{\dot{H}^1\cap \dot{H}^{\frac{3}{2}+\eps}}\big)
\lA f\rA_{\dot{H}^1\cap \dot{H}^{\frac{3}{2}+\eps}}\lA g \rA_{\dot H^{\tdm+\eps}\cap \dot H^2}.
\ee
\end{prop}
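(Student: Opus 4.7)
The plan is to slide $\Lambda^{1+\eps}$ through the integral defining $\mathcal{T}(f)g$. Since $\Lambda^{1+\eps}$, $\partial_x$ and $\Delta_\alpha$ are Fourier multipliers in the $x$-variable they commute pairwise, which yields the identity
$$
\Lambda^{1+\eps}\mathcal{T}(f)g-\mathcal{T}(f)\Lambda^{1+\eps}g=-\frac{1}{\pi}\int\bigl[\Lambda^{1+\eps},\,F(\Delta_\alpha f)\bigr]\bigl(\partial_x\Delta_\alpha g\bigr)\dalpha,
$$
where $F(t)\defn t^2/(1+t^2)$ is a smooth bounded function vanishing at $0$ with bounded derivatives of every order. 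The task is thus reduced to estimating, for each fixed $\alpha$, a single $L^2$-commutator $[\Lambda^{1+\eps},a_\alpha]v_\alpha$ with $a_\alpha\defn F(\Delta_\alpha f)$ and $v_\alpha\defn\partial_x\Delta_\alpha g$, and then integrating the result in $\alpha$.

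For the pointwise (in $\alpha$) estimate I would invoke the sharp Kato--Ponce commutator inequality, in the refinement proved recently by Li and cited in the introduction, in the form
$$
\lA[\Lambda^{1+\eps},a_\alpha]v_\alpha\rA_{L^2}
\les \lA a_\alpha\rA_{\dot{H}^{1+\eps}}\lA v_\alpha\rA_{L^\infty}
+\lA \partial_x a_\alpha\rA_{L^\infty}\lA v_\alpha\rA_{\dot{H}^\eps}.
$$
The bound $\lA\partial_x a_\alpha\rA_{L^\infty}\le \lA F'\rA_{L^\infty}\lA\Delta_\alpha f_x\rA_{L^\infty}$ is immediate, while for $\lA a_\alpha\rA_{\dot H^{1+\eps}}$ I would apply the refined composition rule \e{comp2} of Proposition \ref{P:embeddings} with $\sigma=1+\eps\in(1,2)$: since $F$ is smooth with bounded derivatives,
$$
\lA a_\alpha\rA_{\dot H^{1+\eps}}\le \mathcal F\bigl(\lA\Delta_\alpha f\rA_{L^\infty}\bigr)\bigl(\lA\Delta_\alpha f\rA_{\dot H^{\eps}}+\lA\Delta_\alpha f\rA_{\dot H^{1+\eps}}\bigr).
$$
The uniform bound $\lA\Delta_\alpha f\rA_{L^\infty}\le \lA f_x\rA_{L^\infty}\les\lA f\rA_{\dot H^{3/2+\eps}}$ coming from the Sobolev embedding $\dot H^{1/2+\eps}\hookrightarrow L^\infty$ renders the factor $\mathcal F$ harmless.

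The final step is to integrate the pointwise bound against $\dalpha$ and recover the norms on the right-hand side of \e{n3.2}. Each factor $\lA\delta_\alpha u\rA_Y/|\alpha|$ produced by $\Delta_\alpha$ is a Besov density in $\alpha$; writing $|\alpha|^{-2}=|\alpha|^{-1-a}|\alpha|^{-1+a}$ with a carefully chosen $a$, a Cauchy--Schwarz in $L^2(\dalpha/|\alpha|)$ together with the Besov characterization \e{Bs<1} of the $\dot H^s$-norms, the Sobolev embedding \e{SE} for the $L^\infty$-valued Besov norms of $f_x$, and Lemma \ref{L:B21} when needed, distributes the lost derivatives symmetrically between the $f$- and the $g$-factors. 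The choice $a=\eps$ places $f$ in $\dot H^{3/2+\eps}$ and $g$ in $\dot H^2$, in agreement with \e{n3.2}. The main obstacle is that for $s\ge 1$ the homogeneous Sobolev norm $\dot H^s$ is not characterized by first differences $\delta_\alpha$ alone, so $\lA\delta_\alpha f\rA_{\dot H^{1+\eps}}$ cannot be integrated directly against a single weight to reproduce a norm of $f$. To bypass this I would split the $\alpha$-integral into $|\alpha|\le 1$ and $|\alpha|\ge 1$ and bound $\lA\delta_\alpha f\rA_{\dot H^{1+\eps}}$ directly at the Fourier level via a $\min(|\alpha|\lA f\rA_{\dot H^{2+\eps}},\,\lA f\rA_{\dot H^{1+\eps}})$-type interpolation, exploiting the embedding $\dot H^1\cap\dot H^{3/2+\eps}\hookrightarrow\dot H^{1+\eps}$. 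It is precisely this splitting, combined with the refined composition rule \e{comp2}, that allows the top regularity on $g$ in \e{n3.2} to stay at $\dot H^2$ rather than $\dot H^{2+\eps}$.
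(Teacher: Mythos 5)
Your reduction of the commutator to the $\alpha$-integral
$$
\Lambda^{1+\eps}\mathcal{T}(f)g-\mathcal{T}(f)\Lambda^{1+\eps}g
=-\frac{1}{\pi}\int\bigl[\Lambda^{1+\eps},F(\Delta_\alpha f)\bigr]\bigl(\Delta_\alpha g_x\bigr)\dalpha
$$
is correct and is exactly how the paper starts. The paper then treats the commutator by hand: it peels off the term $\Delta_\alpha g_x\,\Lambda^{1+\eps}F(\Delta_\alpha f)$ (their $R_1(f)g$) and estimates the Kenig--Ponce--Vega trilinear remainder $\Lambda^{1+\eps}(uv)-u\Lambda^{1+\eps}v-v\Lambda^{1+\eps}u$ (their $R_2(f)g$) with Li's fractional Leibniz rule on $L^4\times L^4\to L^2$. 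Your plan packages the same two contributions into a single homogeneous Kato--Ponce commutator estimate; this is an admissible shortcut, provided you cite the homogeneous $L^2\times L^\infty$ commutator form from Li (or D'Ancona), and it eliminates the explicit $L^4$ Sobolev step. The ingredients that carry the proof (the composition rule~\e{comp2} with $\sigma\in(1,2)$, the Besov characterization~\e{Bs<1}, Cauchy--Schwarz in $\dalpha/|\alpha|$ with a split weight) are identical.

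Where your proposal goes wrong is the final paragraph. The obstacle you name --- that $\lA\delta_\alpha f\rA_{\dot H^{1+\eps}}$ ``cannot be integrated directly against a single weight'' because $\dot H^{s}$ for $s\ge1$ is not characterized by first differences --- is a misdiagnosis. The object in play is $\lA\Delta_\alpha f\rA_{\dot H^{1+\eps}}$ for \emph{fixed} $\alpha$, and since $\Lambda^{1+\eps}$ is a Fourier multiplier it commutes with the translation $\delta_\alpha$, so $\lA\Delta_\alpha f\rA_{\dot H^{1+\eps}}=\lA\delta_\alpha\Lambda^{1+\eps}f\rA_{L^2}/|\alpha|$. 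One then distributes the weight as $|\alpha|^{-1/2}\cdot|\alpha|^{-1/2}\cdot\dalpha/|\alpha|$ and Cauchy--Schwarz gives
$$
\int\lA\Delta_\alpha g_x\rA_{L^\infty}\lA\Delta_\alpha f\rA_{\dot H^{1+\eps}}\dalpha
\le\lA g_x\rA_{\dot B^{1/2}_{\infty,2}}\lA\Lambda^{1+\eps}f\rA_{\dot B^{1/2}_{2,2}}
\les\lA g\rA_{\dot H^2}\lA f\rA_{\dot H^{3/2+\eps}},
$$
which is exactly~\e{goalT2-2} in the paper: the characterization~\e{Bs<1} is applied with index $1/2\in(0,1)$ to $\Lambda^{1+\eps}f$, not with index $1+\eps$ to $f$, so no second differences are needed and no splitting of the $\alpha$-integral is required. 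Your proposed workaround --- splitting at $|\alpha|=1$ and invoking a $\min(|\alpha|\lA f\rA_{\dot H^{2+\eps}},\lA f\rA_{\dot H^{1+\eps}})$ interpolation --- would import the norm $\lA f\rA_{\dot H^{2+\eps}}$, which is not controlled by $\lA f\rA_{\dot H^{1}\cap\dot H^{3/2+\eps}}$, so that route does not close. (Also note that the exponent you should balance with in this particular piece is $1/2$, not $\eps$; the choice $a=\eps$ is what one uses in the \emph{other} piece, $\lA\Delta_\alpha f_x\rA_{L^\infty}\lA\Delta_\alpha g_x\rA_{\dot H^\eps}$.) Drop the splitting, commute $\Lambda$-powers through $\delta_\alpha$, and your argument becomes a correct and slightly streamlined version of the paper's.
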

\begin{proof}
Recall that
$$
\mathcal{T}(f)g=-\frac{1}{\pi}\int \frac{(\Delta_\alpha f)^2}{1+(\Delta_\alpha f)^2}\Delta_\alpha g_x\dalpha.
$$
Since 
$$
\Lambda^{1+\eps}\Delta_\alpha g_x=\Delta_\alpha \big(\Lambda^{1+\eps}g_x\big),
$$
we have
$$
\Lambda^{1+\eps}\mathcal{T}(f)g =\mathcal{T}(f)\Lambda^{1+\eps}g+ R_1(f)g + R_2(f)g,
$$
where
\be
R_1(f)g=-\frac{1}{\pi}\int \big(\Delta_\alpha g_x\big) 
\Lambda^{1+\eps}  \bigg(\frac{(\Delta_\alpha f)^2}{1+(\Delta_\alpha f)^2}\bigg)
\dalpha\label{T2},
\ee
and
\begin{align*}
R_2(f)g&=
-\frac{1}{\pi}\int \big(\Lambda^{1+\eps} \big(u_\alpha v_\alpha\big)
-u_\alpha\Lambda^{1+\eps}v_\alpha - v_\alpha \Lambda^{1+\eps} u_\alpha\big)
\dalpha\quad \text{with}\\
u_\alpha&=\Delta_\alpha g_x,\qquad v_\alpha=\frac{(\Delta_\alpha f)^2}{1+(\Delta_\alpha f)^2}.
\end{align*}

We shall estimate these two terms separately. 
Classical results from 
paradifferential calculus (see \cite{Bony,CoMe,MePise}) would allow us to estimate them 
provided that we work in nonhomogeneous Sobolev spaces. 
In the homogeneous spaces we are considering, we shall see that one can 
derive similar results by using only elementary nonlinear estimates.

We begin with the study of $R_1(f)g$. 
\begin{lemma}\label{Lemma2.2}
There exists a non-decreasing function $\mathcal{F}\colon \xR_+\rightarrow \xR_+$ such that
$$
\lA R_1(f)g\rA_{L^2}\le \mathcal{F}\big(\lA f\rA_{\dot{H}^1\cap \dot{H}^{\tdm+\eps}}\big)
\lA f\rA_{\dot{H}^1\cap \dot{H}^{\tdm+\eps}}
\lA g \rA_{\dot H^{\tdm+\eps}\cap \dot H^2}.
$$
\end{lemma}
\begin{proof}
By definition
$$
R_1(f)g=-\frac{1}{\pi}\int \big(\Delta_\alpha g_x\big) 
\Lambda^{1+\eps}  \bigg(\frac{(\Delta_\alpha f)^2}{1+(\Delta_\alpha f)^2}\bigg)
\dalpha,
$$
so
\begin{equation}\label{T2-0}
\lA R_1(f)g\rA_{L^2} \les 
 \int \Vert \Delta_{\alpha}g_x\Vert_{L^{\infty}}
 \lA \frac{(\Delta_\alpha f)^2}{1+(\Delta_\alpha f)^2}\rA_{\dot{H}^{1+\eps}}\dalpha.
 \end{equation}
The Sobolev embedding $L^2(\xR)\cap \dot{H}^{\mez+\eps}(\xR)\hookrightarrow L^\infty(\xR)$ implies that, 
for all $\alpha$ in $\xR$,
\be\label{o1}
\sup_{x\in\xR}\la \Delta_\alpha f(x)\ra\le \sup_{x\in\xR}\la f_x(x)\ra\les 
\lA f_x\rA_{L^2\cap \dot{H}^{\mez+\eps}}=\lA f\rA_{\dot{H}^1\cap \dot{H}^{\tdm+\eps}},
\ee
so that the composition rule~\e{comp2} implies that
\be\label{T2-1}
\lA \frac{(\Delta_\alpha f)^2}{1+(\Delta_\alpha f)^2}\rA_{\dot{H}^{1+\eps}}
\le \mathcal{F}\big(\lA f\rA_{\dot{H}^1\cap \dot{H}^{\tdm+\eps}}\big)\Big(
\lA \Delta_\alpha f\rA_{\dot{H}^\eps}+\lA \Delta_\alpha f\rA_{\dot{H}^{1+\eps}}\Big).
\ee
We claim that we have the 
two following inequalities
\begin{align}
&\int \Vert \Delta_{\alpha}g_x\Vert_{L^{\infty}}
 \lA \Delta_\alpha f\rA_{\dot{H}^{\eps}}\dalpha\les \lA g \rA_{\dot H^{\tdm+\eps}}
 \lA f \rA_{\dot H^{1}}, \label{goalT2-1}\\
&\int \Vert \Delta_{\alpha}g_x\Vert_{L^{\infty}}
 \lA \Delta_\alpha f\rA_{\dot{H}^{1+\eps}}\dalpha\les \lA g\rA_{\dot H^2} \lA f \rA_{\dot{H}^{\tdm+\eps}}\label{goalT2-2}.
\end{align}
Let us prove \e{goalT2-1}. Directly from the definition of $\Delta_\alpha$, we have
\begin{align*}
\int \Vert \Delta_{\alpha}g_x\Vert_{L^{\infty}} 
\lA \Delta_\alpha f\rA_{\dot H^{\eps}}\dalpha
&=   \int \frac{\Vert \delta_{\alpha}g_{x}\Vert_{L^{\infty}}}{\vert \alpha \vert}
\frac{\Vert \delta_{\alpha}\Lambda^{\eps}f\Vert_{L^2}}{\vert \alpha \vert} \dalpha\\
&= \int \frac{\Vert \delta_{\alpha}g_{x}\Vert_{L^{\infty}}}{\vert \alpha \vert^{\eps}} \frac{\Vert \delta_{\alpha}\Lambda^{\eps}f\Vert_{L^2}}{\vert \alpha \vert^{1-\eps}} \frac{\dalpha}{\la\alpha\ra}.
\end{align*}
So, using the Cauchy-Schwarz inequality,
$$
\int \Vert \Delta_{\alpha}g_x\Vert_{L^{\infty}} 
\lA \Delta_\alpha f\rA_{\dot H^{\eps}}\dalpha\le 
\left(\int \frac{\Vert \delta_{\alpha}g_{x}\Vert_{L^{\infty}}^2}{\vert \alpha \vert^{2\eps}} \frac{\dalpha}{|\alpha|}\right)^{\mez}
\left(\int 
\frac{\Vert \delta_{\alpha}\Lambda^{\eps}f\Vert_{L^2}^2}{\vert \alpha \vert^{2(1-\eps)}} \frac{\dalpha}{\la\alpha\ra}\right)^\mez
$$
and hence, using the definition of Besov semi-norms (see~\e{Bs<1}), 
$$
\int \Vert \Delta_{\alpha}g_x\Vert_{L^{\infty}} 
\lA \Delta_\alpha f\rA_{\dot H^{\eps}}\dalpha\les 
\Vert g_x \Vert_{\dot B^{\eps}_{\infty,2}} 
\Vert \Lambda^{\eps} f \Vert_{\dot B^{1-\eps}_{2,2}}.
$$
By using~\e{SE0} and~\e{SE}, we obtain that
$$
\int \Vert \Delta_{\alpha}g_x\Vert_{L^{\infty}} 
\lA \Delta_\alpha f\rA_{\dot H^{\eps}}\dalpha\les 
\Vert g \Vert_{\dot H^{\tdm+\eps}} 
\Vert f \Vert_{\dot H^{1}},
$$
which is the first claim~\e{goalT2-1}. 
To prove the second claim~\e{goalT2-2}, 
we repeat the same arguments except that we 
balance the powers of $\alpha$ in a different way: 
\begin{align*}
\int \Vert \Delta_{\alpha}g_x\Vert_{L^{\infty}}
 \lA \Delta_\alpha f\rA_{\dot{H}^{1+\eps}}\dalpha&=
\int \frac{\Vert \delta_{\alpha}g_{x}\Vert_{L^{\infty}}}{\vert \alpha \vert^{1/2}} \frac{\Vert \delta_{\alpha}\Lambda^{1+\eps}f\Vert_{L^2}}{\vert \alpha \vert^{1/2}} \frac{\dalpha}{\la\alpha\ra}\\
&\le \left(\int \frac{\Vert \delta_{\alpha}g_{x}\Vert_{L^{\infty}}^2}{\vert \alpha \vert} \frac{\dalpha}{|\alpha|}\right)^{\mez}
\left(\int 
\frac{\Vert \delta_{\alpha}\Lambda^{1+\eps}f\Vert_{L^2}^2}{\vert \alpha \vert} \frac{\dalpha}{\la\alpha\ra}\right)^\mez\\
&\le \lA g_x \rA_{\dot B^{\mez}_{\infty,2}} \lA \Lambda^{1+\eps} f \rA_{\dot B^{\mez}_{2,2}}\\
&\les \lA g \rA_{\dot H^2} \lA f \rA_{\dot{H}^{\tdm+\eps}},
\end{align*}
which proves the claim~\e{goalT2-2}. Now, by combining the two claims 
\e{goalT2-1}, \e{goalT2-2} with \e{T2-0} and \e{T2-1}, we obtain that
$$
\lA R_1(f)g\rA_{L^2}\le \mathcal{F}\big(\lA f\rA_{\dot{H}^1\cap \dot{H}^{\tdm+\eps}}\big)
\lA f\rA_{\dot{H}^1\cap \dot{H}^{\tdm+\eps}}
\lA g \rA_{\dot H^{\tdm+\eps}\cap \dot H^2},
$$
which is the desired result.
\end{proof}

We now move to the second remainder term $R_2(f)g$. 
\begin{lemma}\label{Lemma2.3}
There exists a non-decreasing function $\mathcal{F}\colon \xR_+\rightarrow \xR_+$ such that
$$
\lA R_2(f)g\rA_{L^2}\le \mathcal{F}\big(\lA f\rA_{\dot{H}^1\cap \dot{H}^{\tdm+\eps}}\big)
\lA f\rA_{\dot{H}^1\cap \dot{H}^{\tdm+\eps}}
\lA g \rA_{\dot H^{\tdm+\eps}\cap \dot H^2}.
$$
\end{lemma}
\begin{proof}
We use the classical Kenig--Ponce--Vega commutator estimate
\begin{equation}\label{KPV}
\Vert \Lambda^{s}( uv) -u \Lambda^s v - v \Lambda^s u \Vert_{L^{r}} \leq C \lA \Lambda^{s_{1}} u \rA_{L^{p_1}} \lA \Lambda^{s_{2}} v \rA_{L^{p_2}}
\end{equation}
where $s=s_1+s_2$ and $1/r=1/p_1+1/p_2$. 
Kenig, Ponce and Vega considered the case $s<1$. 
Since, for our purpose we need $s>1$, 
we will use the recent improvement  by Li~\cite{Li} (see also D'Ancona~\cite{Dancona}) 
showing that \e{KPV} holds under 
the assumptions
$$
s=s_1+s_2\in (0,2),\quad s_j\in (0,1),
\quad \frac{1}{r}=\frac{1}{p_1}+\frac{1}{p_2},\quad 2\le p_j<\infty.
$$
With $p_1=4$, $p_2=4$, $r=2$, $s=1+\eps$, $s_1=\frac{3\eps}{2}$, $s_2=1-\frac{\eps}{2}$, this implies that
$$
\lA R_2(f)g\rA_{L^2}\les   \int \blA \Lambda^{\frac{3\eps}{2}}\Delta_{\alpha} g_{x} \brA_{L^{4}} \lA \Lambda^{1-\frac{\eps}{2}}\frac{(\Delta_\alpha f)^2}{1+(\Delta_\alpha f)^2} \rA_{L^4} \dalpha.
$$
We now use the Sobolev inequality
$$
\lA u\rA_{L^4}\les \blA \Lambda^{\frac{1}{4}} u\brA_{L^2},
$$
to obtain
$$
\lA R_2(f)g\rA_{L^2}\les   \int \blA \Lambda^{\frac14+\frac{3\eps}{2}}\Delta_{\alpha} g_{x} \brA_{L^{2}} \lA \Lambda^{\frac54-\frac{\eps}{2}}\frac{(\Delta_\alpha f)^2}{1+(\Delta_\alpha f)^2} \rA_{L^2} \dalpha.
$$
By combining the composition rule~\e{comp2} (applied with $\sigma=\frac54-\frac{\eps}2\in (1,2)$) and \e{o1}, 
we obtain that 
$$
\lA R_2(f)g\rA_{L^2}\les  \mathcal{F}(M)
\int \blA \Lambda^{\frac14+\frac{3\eps}{2}}\Delta_{\alpha} g_{x} \brA_{L^{2}} 
\Big(\blA \Lambda^{\frac14-\frac{\eps}{2}}\Delta_\alpha f \brA_{L^2}
+\blA \Lambda^{\frac54-\frac{\eps}{2}}\Delta_\alpha f \brA_{L^2}\Big) \dalpha
$$
where $M=\lA f\rA_{\dot{H}^1\cap \dot{H}^{\tdm+\eps}}$. 
We now proceed as in the previous proof. More precisely, 
we balance the powers of $\alpha$, use 
the Cauchy-Schwarz inequality, 
the definition of the Besov semi-norms~\e{Bs<1} and the 
Sobolev embedding to obtain that
\begin{align*}
&\int \blA \Lambda^{\frac14+\frac{3\eps}{2}}\Delta_{\alpha} g_{x} \brA_{L^{2}} 
\blA \Lambda^{\frac14-\frac{\eps}{2}}\Delta_\alpha f \brA_{L^2} \dalpha\\
&\qquad\qquad=\int \frac{\blA \Lambda^{\frac14+\frac{3\eps}{2}}\delta_{\alpha} g_{x} \brA_{L^2}}{\la\alpha\ra}
\frac{\blA \Lambda^{\frac14-\frac{\eps}{2}}\delta_\alpha f \brA_{L^2}}{\la\alpha\ra}\dalpha\\
&\qquad\qquad=\int \frac{\blA \Lambda^{\frac14+\frac{3\eps}{2}}\delta_{\alpha} g_x \brA_{L^2}}{\la\alpha\ra^{1/4-\eps/2}}
\frac{\blA \Lambda^{\frac14-\frac{\eps}{2}}\delta_\alpha f \brA_{L^2}}{\la\alpha\ra^{3/4+\eps/2}}\frac{\dalpha}{\la\alpha\ra}\\
&\qquad\qquad\le \lA \Lambda^{\frac14+\frac{3\eps}{2}}g_x\rA_{\dot{B}^{\frac14-\eps/2}_{2,2}}
\lA \Lambda^{\frac14-\frac{\eps}{2}} f\rA_{\dot{B}^{\frac34+\eps/2}_{2,2}}\\
&\qquad\qquad\les \lA g\rA_{\dot{H}^{\tdm+\eps}}
\lA f\rA_{\dot{H}^1}.
\end{align*}
One estimates the second term in a similar way. We begin by writing that
\begin{align*}
&\int \blA \Lambda^{\frac14+\frac{3\eps}{2}}\Delta_{\alpha} g_{x} \brA_{L^{2}} 
\blA \Lambda^{\frac54-\frac{\eps}{2}}\Delta_\alpha f \brA_{L^2} \dalpha\\
&\qquad\qquad=\int \frac{\blA \Lambda^{\frac14+\frac{3\eps}{2}}\delta_{\alpha} g_{x} \brA_{L^2}}{\la\alpha\ra}
\frac{\blA \Lambda^{\frac54-\frac{\eps}{2}}\delta_\alpha f \brA_{L^2}}{\la\alpha\ra}\dalpha\\
&\qquad\qquad=\int \frac{\blA \Lambda^{\frac14+\frac{3\eps}{2}}\delta_{\alpha} g_x \brA_{L^2}}{\la\alpha\ra^{3/4-3\eps/2}}
\frac{\blA \Lambda^{\frac54-\frac{\eps}{2}}\delta_\alpha f \brA_{L^2}}{\la\alpha\ra^{1/4+3\eps/2}}\frac{\dalpha}{\la\alpha\ra}.
\end{align*}
Since $\eps$ belongs to $(0,1/2)$ we have $3/4-3\eps/2>0$ and $1/4+3\eps/2<1$. Therefore one can use 
the definition \e{Bs<1} of the Besov semi-norms to deduce that
\begin{align*}
\int \blA \Lambda^{\frac14+\frac{3\eps}{2}}\Delta_{\alpha} g_{x} \brA_{L^{2}} 
\blA \Lambda^{\frac54-\frac{\eps}{2}}\Delta_\alpha f \brA_{L^2} \dalpha
&\le \blA \Lambda^{\frac{1}{4}+\frac{3\eps}{2}}g_x\brA_{\dot{B}^{\frac{3}{4}-\frac{3\eps}{2}}_{2,2}}
\blA \Lambda^{\frac{5}{4}-\frac{\eps}{2}} f\brA_{\dot{B}^{\frac{1}{4}+\frac{3\eps}{2}}_{2,2}}\\
&\les \lA g\rA_{\dot{H}^{2}}\lA f\rA_{\dot{H}^{\frac{3}{2}+\eps}}.
\end{align*}
By combining the above inequalities, we have proved that
$$
\lA R_2(f)g\rA_{L^2}\le \mathcal{F}\big(\lA f\rA_{\dot{H}^1\cap \dot{H}^{\tdm+\eps}}\big)
\lA f\rA_{\dot{H}^1\cap \dot{H}^{\tdm+\eps}}
\lA g\rA_{\dot{H}^{\tdm+\eps}\cap \dot{H}^{2}},
$$
which concludes the proof.
\end{proof}
This completes the proof of the proposition.
\end{proof}

\section{High frequency estimate}\label{S:high}

We now prove the second point of Theorem~\ref{T1} whose statement is recalled in the 
next proposition.

\begin{prop}\label{T1:prophigh}
For all $0<\nu<\eps<1/2$, there exists a positive constant $C>0$ 
such that, for all functions $f,g$ in $\cap_{\sigma\ge 1}\dot{H}^\sigma(\xR)$, 
$$
\mathcal{T}(f)g=\frac{f_x^2}{1+f_x^2}\Lambda g+V(f)\partial_x g+R(f,g)
$$
where
$$
\lA R(f,g)\rA_{L^2}\le C\lA f\rA_{\dot{H}^{\tdm+\eps}}\lA g\rA_{\dot{B}^{1-\eps}_{2,1}},
\qquad 
\lA V(f)\rA_{C^{0,\nu}}
\le C\lA f\rA_{\dot{H}^1\cap \dot{H}^{\frac{3}{2}+\eps}}^2.
$$
\end{prop}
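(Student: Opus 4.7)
The plan is to decompose $\mathcal{T}(f)g$ in two stages. First I extract the parabolic part $\gamma(f)\Lambda g$; then I isolate a sub-principal transport term $V(f)\partial_x g$, arriving at the explicit candidate $V(f) = -\mathcal{H}(\gamma(f))$ via a Fourier analysis of the midpoint asymptotics of $\Delta_\alpha f$.

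For the first stage, the identity $-\frac{1}{\pi}\int \Delta_\alpha g_x\,d\alpha = \Lambda g$, which follows immediately from~\eqref{defiHilbertLambda}, yields
\[
\mathcal{T}(f)g = \gamma(f)\Lambda g + I, \qquad I := -\frac{1}{\pi}\int \Delta_\alpha g_x\,[\gamma_\alpha(f) - \gamma(f)]\,d\alpha,
\]
with $\gamma_\alpha(f)(x) := (\Delta_\alpha f(x))^2/(1+(\Delta_\alpha f(x))^2)$. Since $\gamma_\alpha(f)\to\gamma(f)$ as $\alpha\to 0$, the integrand of $I$ vanishes at $\alpha = 0$, so $I$ is already better behaved than $\mathcal{T}(f)g$.

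The second stage is the main obstacle. The key is the \emph{midpoint approximation}: symmetrising $s\mapsto 1-s$ in the representation $\Delta_\alpha f(x) = \int_0^1 f_x(x-s\alpha)\,ds$ expresses $\Delta_\alpha f(x) - f_x(x-\alpha/2)$ as an $s$-average of symmetric second differences $s_{t\alpha} f_x$ of $f_x$, controlled by $\|f\|_{\dot H^{3/2+\eps}}$ with a favourable power of $|\alpha|$. By Lipschitz continuity of $F(y) = y^2/(1+y^2)$, this gives $\gamma_\alpha(f)(x) = \gamma(f)(x-\alpha/2) + \eta_\alpha(x)$, where $\eta_\alpha$ is handled through the composition rule~\eqref{comp2}. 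The leading part of $I$ becomes
\[
\widetilde I := -\frac{1}{\pi}\int \Delta_\alpha g_x\,\bigl[\gamma(f)(x-\alpha/2) - \gamma(f)(x)\bigr]\,d\alpha,
\]
and a direct Fourier computation, using the classical identities $\mathrm{pv}\int e^{-i\alpha k}/\alpha\,d\alpha = -i\pi\,\mathrm{sgn}(k)$ and $\int(1-e^{-i\alpha k})/\alpha^2\,d\alpha = \pi|k|$, shows that $\widetilde I$ is the bilinear Fourier multiplier (in the variable $\eta$ for $\gamma(f)$ and $\xi$ for $g$) with symbol $\xi\bigl[\mathrm{sgn}(\xi+\eta/2) - \mathrm{sgn}(\eta) - \mathrm{sgn}(\xi)\bigr]$. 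For $|\xi|\gg|\eta|$ one has $\mathrm{sgn}(\xi+\eta/2) = \mathrm{sgn}(\xi)$ and the symbol reduces to $-\xi\,\mathrm{sgn}(\eta)$, which corresponds via the Fourier identifications $-\xi\leftrightarrow i\partial_x$ and $\mathrm{sgn}(\eta)\leftrightarrow i\mathcal{H}$ to the bilinear operator $(a,g)\mapsto -\mathcal{H}(a)\,\partial_x g$. This identifies $V(f) = -\mathcal{H}(\gamma(f))$.

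The estimate~\eqref{n401} on $V(f)$ then follows from the boundedness of $\mathcal{H}$ on $C^{0,\nu}$, the refined Lipschitz bound $|F(y_1) - F(y_2)|\le 2\max(|y_1|,|y_2|)\,|y_1 - y_2|$ applied to $y_i = f_x$ (yielding the quadratic bound $\|\gamma(f)\|_{C^{0,\nu}}\lesssim \|f_x\|_{L^\infty}\|f_x\|_{C^{0,\nu}}$), and the one-dimensional Sobolev embedding $\dot{H}^{1/2+\eps}\hookrightarrow L^\infty\cap C^{0,\nu}$. For~\eqref{n400}, the remainder $R(f,g)$ splits into two pieces: the midpoint-error contribution $-\frac{1}{\pi}\int \Delta_\alpha g_x\,\eta_\alpha(x)\,d\alpha$, estimated by balancing $|\alpha|$-powers against Besov seminorms of $f$ and $g$ as in Proposition~\ref{P:continuity} so that the loss on $g$ is exactly $1-\eps$; and the high-frequency symbol correction arising from the deviation $\mathrm{sgn}(\xi+\eta/2) - \mathrm{sgn}(\xi)$, which is supported where $|\eta|\gtrsim|\xi|$ and amounts to a paraproduct-type operator controllable by commutator techniques analogous to Lemma~\ref{L:Hilbert}.
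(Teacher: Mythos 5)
Your plan is genuinely different from the paper's. The paper works entirely in physical space: it splits $\gamma_\alpha(f)=\frac{(\Delta_\alpha f)^2}{1+(\Delta_\alpha f)^2}$ into its even and odd parts $\mathcal{E}(\alpha,\cdot)$, $\mathcal{O}(\alpha,\cdot)$ in $\alpha$, then performs an integration by parts in $\alpha$ in each piece (using $\Delta_\alpha g_x-\bar\Delta_\alpha g_x=-\partial_\alpha(s_\alpha g)/\alpha$ for the even part and $g_x(x-\alpha)=\partial_\alpha\delta_\alpha g$ for the odd part), and reads off the principal terms $\gamma(f)\Lambda g$ and $V\partial_x g$ with $V(x)=-\frac{1}{\pi}\int\mathcal O(\alpha,x)/\alpha\,d\alpha$; the remainders are then controlled pointwise in $\alpha$ via the bounds on $\mathcal{E}$, $\partial_\alpha\mathcal{E}$, $\mathcal{O}/\alpha$, $\partial_\alpha\mathcal{O}$, which lead to finite-difference integrals of $s_\alpha g$ and $\delta_\alpha g$ that pair against the $\dot B^{1-\eps}_{2,1}$ norm of $g$. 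You instead identify the transport term by a bilinear Fourier-symbol computation, obtaining $V(f)=-\mathcal H(\gamma(f))$. The symbol computation itself is correct, and since the statement only asserts existence of some $(V,R)$ with the stated bounds, this is a legitimate alternative candidate.

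However, the argument for the remainder estimate does not go through as sketched. The crucial feature of the paper's proof is the integration by parts in $\alpha$, which \emph{lowers the order of the finite difference of $g$ from $\Delta_\alpha g_x$ (a difference of $g_x$) to $s_\alpha g/\alpha$ or $\delta_\alpha g/\alpha$ (differences of $g$)}; this is exactly what makes the bound $\lA g\rA_{\dot B^{1-\eps}_{2,1}}$ — a seminorm of $g$ of order below $1$ — accessible. In your midpoint-error term $-\frac{1}{\pi}\int\Delta_\alpha g_x\,\eta_\alpha\,d\alpha$ the factor is still $\Delta_\alpha g_x=\delta_\alpha g_x/\alpha$. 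No matter how you balance the $|\alpha|$-powers, $\lA\delta_\alpha g_x\rA_{L^2}$ paired with $d\alpha/|\alpha|^{\,\cdot}$ gives Besov seminorms of $g_x$, i.e.\ $\dot H^\sigma$-type quantities of $g$ at order $\geq 1$, not $\lA g\rA_{\dot B^{1-\eps}_{2,1}}$; the analogy with Proposition~\ref{P:continuity} does not help because there the loss on $g$ is $3/2$, not $1-\eps$. You would need to integrate by parts in $\alpha$ as the paper does before you can balance. The second piece — the ``high-frequency symbol correction'' with symbol $\xi[\sign(\xi+\eta/2)-\sign(\xi)]$ supported in $|\eta|\gtrsim|\xi|$ — is a genuinely rough bilinear multiplier (discontinuous along $\xi=-\eta/2$), and ``controllable by commutator techniques analogous to Lemma~\ref{L:Hilbert}'' is an assertion, not an argument: Lemma~\ref{L:Hilbert} concerns a linear commutator with $\mathcal H$, not a bilinear multiplier of this type, and $L^2$-boundedness with the precise Besov budget would need a separate Littlewood–Paley analysis which you have not set up. Finally, a smaller but real issue: $\mathcal H$ is not bounded on $C^{0,\nu}$ (it fails on $L^\infty$), only on the homogeneous seminorm $\dot C^{0,\nu}$; to control $\lA V\rA_{L^\infty}$ you would have to route through $\lA\gamma(f)\rA_{L^2\cap\dot H^{\mez+\eps}}$ and Sobolev embedding rather than Hölder boundedness of $\mathcal H$, which is a different argument than the one stated. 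As written, the proposal identifies a plausible candidate $V$ but leaves the key estimate~\eqref{n400} unproven.
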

We shall prove this proposition in this section by 
using a symmetrization argument which consists in replacing 
the finite differences $\delta_\alpha f(x)=f(x)-f(x-\alpha)$ 
by the symmetric finite differences $2f(x)-f(x-\alpha)-f(x+\alpha)$. 
To do so, it will be convenient to introduce a few notations. 

\begin{notation}\label{N:deltas}
Given a function $f=f(x)$ and a real number $\alpha$, we define 
the functions $\bar\delta_\alpha f$, $\bar\Delta_\alpha f$, $s_\alpha f$, 
$S_\alpha f$ and $D_\alpha f$ by:
\begin{align*}
&\bar\delta_\alpha f(x)=f(x)-f(x+\alpha),\\
&s_\alpha f(x)=\delta_\alpha f(x)+\bar\delta_\alpha f(x)=2f(x)-f(x-\alpha)-f(x+\alpha),
\end{align*}
and
\begin{align*}
&\bar\Delta_\alpha f(x)=\frac{f(x)-f(x+\alpha)}{\alpha},\\
&S_\alpha f(x)=\Delta_\alpha f(x)+\bar\Delta_\alpha f(x)=\frac{s_\alpha f(x)}{\alpha}=\frac{2 f(x)-f(x+\alpha)-f(x-\alpha)}{\alpha},\\
&D_{\alpha} f(x)=\Delta_\alpha f(x)-\bar\Delta_\alpha f(x)=\frac{f(x+\alpha)-f(x-\alpha)}{\alpha}.
\end{align*}
\end{notation}
\begin{lemma}\label{L:partialalpha}
One has
\begin{equation}\label{n:Dalpha}
 D_{\alpha} f =  2 f_x-\frac{1}{\alpha} \int_0^\alpha s_{\eta}f_{x}  \deta,
 \end{equation}
where $s_{\eta}f_{x}(x)=2f_x(x)-f_x(x+\eta)-f_x(x-\eta)$. Furthermore,
\be\label{n:paSD1}
\partial_{\alpha}(D_\alpha f)=-S_{\alpha}f_{x}+\frac{1}{\alpha^2}{\int_{0}^{\alpha} s_{\eta}f_{x} \deta},
\ee
and
\be\label{n:paSD2}
\partial_{\alpha}(S_\alpha f)=\bar\Delta_{\alpha} f_x-\Delta_{\alpha} f_x-\frac{S_{\alpha}f}{\alpha}.
\ee
\end{lemma}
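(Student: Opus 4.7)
The three identities are all consequences of the fundamental theorem of calculus and careful bookkeeping of signs; there is no deep obstacle, so my plan is simply to record the computation in the right order.

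First I would establish~\e{n:Dalpha} by integrating $f_x$ from $x$ to $x\pm\alpha$. More precisely, the identities
$$
f(x+\alpha)-f(x)=\int_0^\alpha f_x(x+\eta)\deta,\qquad f(x)-f(x-\alpha)=\int_0^\alpha f_x(x-\eta)\deta
$$
give, after subtracting,
$$
\alpha D_\alpha f(x)=f(x+\alpha)-f(x-\alpha)=\int_0^\alpha\bigl(f_x(x+\eta)+f_x(x-\eta)\bigr)\deta.
$$
Using the definition $s_\eta f_x(x)=2f_x(x)-f_x(x+\eta)-f_x(x-\eta)$ to substitute $f_x(x+\eta)+f_x(x-\eta)=2f_x(x)-s_\eta f_x(x)$ yields $\alpha D_\alpha f(x)=2\alpha f_x(x)-\int_0^\alpha s_\eta f_x(x)\deta$, which is exactly~\e{n:Dalpha}.

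Next, for~\e{n:paSD1}, I would differentiate~\e{n:Dalpha} in $\alpha$. Writing $\Phi(\alpha)\defn\int_0^\alpha s_\eta f_x\deta$, the product rule gives
$$
\partial_\alpha\Bigl(\frac{\Phi(\alpha)}{\alpha}\Bigr)=\frac{s_\alpha f_x}{\alpha}-\frac{\Phi(\alpha)}{\alpha^2},
$$
and recalling that $S_\alpha u=s_\alpha u/\alpha$ by Notation~\ref{N:deltas}, so that $s_\alpha f_x/\alpha=S_\alpha f_x$, one gets
$$
\partial_\alpha D_\alpha f=-S_\alpha f_x+\frac{1}{\alpha^2}\int_0^\alpha s_\eta f_x\deta,
$$
which is~\e{n:paSD1}.

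Finally, for~\e{n:paSD2} I would differentiate the defining formula $S_\alpha f(x)=(2f(x)-f(x+\alpha)-f(x-\alpha))/\alpha$ directly. The quotient rule gives
$$
\partial_\alpha S_\alpha f=\frac{-f_x(x+\alpha)+f_x(x-\alpha)}{\alpha}-\frac{2f(x)-f(x+\alpha)-f(x-\alpha)}{\alpha^2},
$$
and the second term is exactly $-S_\alpha f/\alpha$. For the first, I would simply observe that
$$
\bar\Delta_\alpha f_x-\Delta_\alpha f_x=\frac{f_x(x)-f_x(x+\alpha)}{\alpha}-\frac{f_x(x)-f_x(x-\alpha)}{\alpha}=\frac{f_x(x-\alpha)-f_x(x+\alpha)}{\alpha},
$$
which matches, giving~\e{n:paSD2}. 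The only thing to be a little careful about is the sign conventions in Notation~\ref{N:deltas} (in particular $\bar\delta_\alpha=f(x)-f(x+\alpha)$, not $f(x+\alpha)-f(x)$); apart from that the proof is a line-by-line calculation with no analytical content.
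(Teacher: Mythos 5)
Your proof is correct and takes essentially the same route as the paper's: establish \e{n:Dalpha} by recognizing $\int_0^\alpha(f_x(x+\eta)+f_x(x-\eta))\deta$ as $f(x+\alpha)-f(x-\alpha)$ via the fundamental theorem of calculus, then obtain \e{n:paSD1} by differentiating \e{n:Dalpha} and \e{n:paSD2} by differentiating the defining formula for $S_\alpha f$. One tiny slip: the two FTC identities you write down must be \emph{added}, not subtracted, to produce $f(x+\alpha)-f(x-\alpha)=\int_0^\alpha\bigl(f_x(x+\eta)+f_x(x-\eta)\bigr)\deta$; the displayed equation is nevertheless correct.
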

\begin{proof}
The formula \e{n:Dalpha} can be verified by two direct calculations: one is
$$
\frac{1}{\alpha} \int_0^\alpha 2f_x(x)\deta=2f_x(x),
$$
and the other is
\begin{align*}
\frac{1}{\alpha} \int_0^\alpha( f_x(x-\eta)+f_x(x+\eta))\deta&=
\frac{1}{\alpha} \int_0^\alpha \partial_\eta (f(x+\eta)-f(x-\eta))\deta\\
&=\frac{1}{\alpha}(f(x+\alpha)-f(x-\alpha)).
\end{align*}
Now, the value for $ \partial_{\alpha}(D_\alpha f)$ in \e{n:paSD1} follows by differentiating \e{n:Dalpha}. 

The formula for $\partial_{\alpha}(S_\alpha f)$ follows 
from the definition of $S_\alpha f$ and the chain rule.
\end{proof}
Recall that
$$
\mathcal{T}(f)g=
-\frac{1}{\pi}\int \frac{(\Delta_\alpha f)^2}{1+(\Delta_\alpha f)^2}\Delta_\alpha g_x
\dalpha.
$$
The idea is to decompose the factor
$$
\frac{(\Delta_\alpha f)^2}{1+(\Delta_\alpha f)^2}
$$
into its even and odd components with respect to the variable $\alpha$. We
define
\begin{align}
\mathcal{E}(\alpha,\cdot)&=\mez \frac{(\Delta_\alpha f)^2}{1+(\Delta_\alpha f)^2}+\mez 
\frac{(\bar\Delta_\alpha f)^2}{1+(\bar\Delta_\alpha f)^2}
,\label{defi:E}\\
\mathcal{O}(\alpha,\cdot)&=\mez \frac{(\Delta_\alpha f)^2}{1+(\Delta_\alpha f)^2}
-\mez \frac{(\bar\Delta_\alpha f)^2}{1+(\bar\Delta_\alpha f)^2}
,\label{defi:O}
\end{align}
where the dots in the notations $\mathcal{E}(\alpha,\cdot)$ and $\mathcal{O}(\alpha,\cdot)$ are 
placeholders for the variable~$x$ (notice that 
$(\bar\Delta_\alpha f)^2=(\Delta_{-\alpha}f)^2$ and 
$\bar\Delta_\alpha f=-\Delta_{-\alpha}f$). 
Then,
$$
\mathcal{T}(f)g=- \frac{1}{\pi}
\int   \Delta_{\alpha} g_{x}\  \mathcal{E}(\alpha,\cdot) \dalpha  \\
-\frac{1}{\pi} \int  \Delta_{\alpha} g_{x}\ \mathcal{O}(\alpha,\cdot) \dalpha,
$$
and hence, since $\alpha \mapsto \mathcal{E}(\alpha,\cdot)$ 
is even, this yields $\mathcal{T}(f)g= \mathcal{T}_{e}(f)g+\mathcal{T}_{o}(f)g$ with
\begin{align*}
\mathcal{T}_{e}(f)g&=-\frac{1}{2\pi}\int   \big(\Delta_{\alpha} g_{x} - \bar\Delta_{\alpha} g_{x}\big) \
\mathcal{E}(\alpha,\cdot) \dalpha,  \\
\mathcal{T}_{o}(f)g&=-\frac{1}{\pi}
\int \Delta_{\alpha} g_{x}\ \mathcal{O}(\alpha,\cdot)\dalpha.
\end{align*}
The following result is the key point of the proof where 
we identify the main contribution of the nonlinearity.

\begin{prop}
There exists a constant $C$ such that
\be\label{T11}
\begin{aligned}
&\lA \mathcal{T}_{e}(f)g - \frac{f_x^2}{1+f_x^2}\Lambda g\rA_{L^2}
\le C\lA f\rA_{\dot{H}^{\tdm+\eps}}\lA g\rA_{\dot{B}^{1-\eps}_{2,1}},\\[1ex]
&\lA \mathcal{T}_{o}(f)g- V\partial_x g\rA_{L^2}\le C\lA f\rA_{\dot{H}^{\tdm+\eps}}\lA g\rA_{\dot{B}^{1-\eps}_{2,1}},
\end{aligned}
\ee
where
\be\label{T12}
V(x)=-\frac{1}{\pi}\int_\xR \frac{\mathcal{O}(\alpha,x)}{\alpha} \dalpha.
\ee
\end{prop}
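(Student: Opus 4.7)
The plan is to treat $\mathcal{T}_e$ and $\mathcal{T}_o$ separately, in each case extracting the announced principal term and bounding the remainder in $L^2$ by a Minkowski--Cauchy--Schwarz argument analogous to the proof of Proposition~\ref{P:continuity}.

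For the even part, the change of variable $\alpha\mapsto-\alpha$ and the identity $\Delta_{-\alpha}g_x = -\bar\Delta_\alpha g_x$ let one rewrite the principal value integral defining $\Lambda=\mathcal{H}\partial_x$ in symmetric form as $\Lambda g = -\frac{1}{2\pi}\int D_\alpha g_x\,\dalpha$, which matches the symmetric form of $\mathcal{T}_e$. Since $\Phi(f_x(x))$ with $\Phi(y):=y^2/(1+y^2)$ is independent of $\alpha$, this yields the clean identity
\[
\mathcal{T}_e(f)g - \frac{f_x^2}{1+f_x^2}\Lambda g = -\frac{1}{2\pi}\int D_\alpha g_x\cdot\bigl[\mathcal{E}(\alpha,x) - \Phi(f_x(x))\bigr]\,\dalpha.
\]
Because $\Phi$ is even and $-\bar\Delta_\alpha f(x) = \Delta_\alpha f(x+\alpha)$, one has $\mathcal{E}(\alpha,x) = \frac{1}{2}[\Phi(\Delta_\alpha f(x)) + \Phi(\Delta_\alpha f(x+\alpha))]$, and the Lipschitz bound $|\Phi(a)-\Phi(b)|\le|a-b|$ then gives a pointwise estimate of $\mathcal{E}(\alpha,x) - \Phi(f_x(x))$ by first-order finite differences of $f_x$ on scale $|\alpha|$. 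Applying Minkowski in $\alpha$ followed by Cauchy--Schwarz with a split of the $|\alpha|$-weights as in~\eqref{nTL2}, together with~\eqref{SE} and Lemma~\ref{L:B21}, absorbs the resulting $\alpha$-integrals into $\|f\|_{\dot{H}^{3/2+\eps}}$ and $\|g\|_{\dot{B}^{1-\eps}_{2,1}}$.

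For the odd part, substituting the definitions of $\mathcal{T}_o$ and $V$ and using $\Delta_\alpha g_x - g_x(x)/\alpha = -g_x(x-\alpha)/\alpha$ gives
\[
\mathcal{T}_o(f)g - V(x)g_x(x) = \frac{1}{\pi}\int g_x(x-\alpha)\,\frac{\mathcal{O}(\alpha,x)}{\alpha}\,\dalpha.
\]
Since $\mathcal{O}/\alpha$ is even in $\alpha$, symmetrizing yields $\frac{1}{2\pi}\int[g_x(x+\alpha)+g_x(x-\alpha)]\mathcal{O}/\alpha\,\dalpha$; recognising the bracket as $\partial_\alpha[g(x+\alpha)-g(x-\alpha)]$ and integrating by parts in $\alpha$ (the boundary term at $\alpha=\pm\infty$ vanishing because $\mathcal{O}/\alpha=O(|\alpha|^{-2})$ while $g(x+\alpha)-g(x-\alpha)=O(|\alpha|^{1/2})$ for $g\in\dot{H}^1$) transfers the derivative onto the kernel:
\[
\mathcal{T}_o(f)g - V(x)g_x(x) = -\frac{1}{2\pi}\int\bigl[g(x+\alpha)-g(x-\alpha)\bigr]\,\partial_\alpha\!\Bigl[\frac{\mathcal{O}(\alpha,x)}{\alpha}\Bigr]\,\dalpha.
\]
The $g$-factor is now a first-order difference naturally paired with $\|g\|_{\dot{B}^{1-\eps}_{2,1}}$, while $\partial_\alpha[\mathcal{O}/\alpha]$ expands into combinations of finite differences of $f$ and $f_x$ whose $L^\infty_x$-norm (with appropriate $|\alpha|$-weight) is controlled by $\|f\|_{\dot{H}^{3/2+\eps}}$; essentially the same bookkeeping will also produce the Hölder estimate~\eqref{n401} for $V$. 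A Minkowski--Cauchy--Schwarz argument with the appropriate weight split then closes the second bound.

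The main technical obstacle is obtaining the pointwise (in $x$) estimates for $\mathcal{E}(\alpha,x)-\Phi(f_x(x))$ and for $\partial_\alpha[\mathcal{O}(\alpha,x)/\alpha]$ with exactly the right scale behaviour in $|\alpha|$, so that the $\alpha$-integration reproduces Besov/Sobolev semi-norms with the precise indices $1-\eps$ for $g$ and $3/2+\eps$ for $f$ without any loss. Once the correct weight-splitting is identified, the rest of the argument is a routine adaptation of the scheme of Proposition~\ref{P:continuity}.
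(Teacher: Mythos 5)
Your treatment of the odd part is essentially the paper's argument in a lightly symmetrized form: you split off $V\partial_x g$, observe $g_x(x+\alpha)+g_x(x-\alpha)=\partial_\alpha[g(x+\alpha)-g(x-\alpha)]$, and integrate by parts in $\alpha$ to move the derivative onto $\mathcal{O}/\alpha$. Since $\alpha^{-1}\bigl(\alpha^{-1}\mathcal{O}-\partial_\alpha\mathcal{O}\bigr)$ is odd in $\alpha$, your remainder $\frac{1}{2\pi}\int\bigl(\delta_\alpha g-\bar\delta_\alpha g\bigr)\alpha^{-1}\bigl(\alpha^{-1}\mathcal{O}-\partial_\alpha\mathcal{O}\bigr)\dalpha$ coincides with the paper's $\frac{1}{\pi}\int\alpha^{-1}\delta_\alpha g\,\bigl(\alpha^{-1}\mathcal{O}-\partial_\alpha\mathcal{O}\bigr)\dalpha$ after discarding the even part of $\delta_\alpha g$, so that piece is sound.

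The even part has a genuine gap: you skip the integration by parts in $\alpha$, and without it the stated Besov norm of $g$ cannot be recovered. Your identity
\[
\mathcal{T}_e(f)g-\frac{f_x^2}{1+f_x^2}\Lambda g
=-\frac{1}{2\pi}\int D_\alpha g_x\,\bigl[\mathcal{E}(\alpha,x)-\Phi(f_x(x))\bigr]\dalpha
\]
is correct, and so is the pointwise bound $\la\mathcal{E}-\Phi(f_x)\ra\lesssim|\alpha|^{-1}|s_\alpha f|+\ldots$, which carries the expected $|\alpha|^\eps$-type weight when $f\in\dot H^{3/2+\eps}$. But $\lA D_\alpha g_x\rA_{L^2}=\lA\delta_{2\alpha}g_x\rA_{L^2}/|\alpha|$, i.e.\ a first-order difference of $g_x$, not of $g$. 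After Minkowski and Cauchy--Schwarz, no choice of weight split turns $\lA\delta_\alpha g_x\rA_{L^2}/|\alpha|^{1-\eps}$ into $\lA g\rA_{\dot B^{1-\eps}_{2,1}}$: what one actually gets is $\lA g_x\rA_{\dot B^{1-\eps}_{2,2}}\sim\lA g\rA_{\dot H^{2-\eps}}$, a full derivative more than the Proposition allows. The paper's crucial move at \eqref{nE1} is precisely to write $\Delta_\alpha g_x-\bar\Delta_\alpha g_x=-\alpha^{-1}\partial_\alpha(s_\alpha g)$ and integrate by parts in $\alpha$, thereby replacing the $\alpha$-derivative of a finite difference of $g_x$ by the finite difference $s_\alpha g$ of $g$ itself, paired with $\mathcal{E}$, $\partial_\alpha\mathcal{E}$ and $Q$. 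That is exactly the step you omit; it is not a weight-splitting issue. Notice the inconsistency in your own proposal: you perform the corresponding integration by parts for $\mathcal{T}_o$ but not for $\mathcal{T}_e$, even though the mechanism is the same in both cases.
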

\begin{proof}
$i)$ 
The main difficulty is to extract the elliptic component from $\mathcal{T}_{e}(f)g$. 
To uncover it, we shall perform 
an integration by parts in $\alpha$. 
The first key point is that
\begin{align*}
\Delta_{\alpha} g_x - \bar \Delta_{\alpha} g_x
&=\frac{g_x(\cdot+\alpha)-g_x(\cdot-\alpha)}{\alpha}\\
&=\frac{\partial_{\alpha}\left(g(\cdot+\alpha)+g(\cdot-\alpha)-2g(\cdot)\right)}{\alpha}\\
&=-\frac{\partial_{\alpha}(s_\alpha g)}{\alpha}.
\end{align*}
Consequently, directly from the definition of $\mathcal{T}_{e}(f)g$, 
by integrating by parts in $\alpha$, we obtain that
\be\label{nE1}
\begin{aligned}
\mathcal{T}_{e}(f)g
&=\frac{1}{2\pi}\int  \frac{\partial_{\alpha}( s_\alpha g)}{\alpha}
\mathcal{E}(\alpha,\cdot) \dalpha\\
&=\frac{1}{2\pi}\int \frac{s_\alpha g}{\alpha^2}  \mathcal{E}(\alpha,\cdot)\dalpha
-\frac{1}{2\pi}\int\frac{ s_\alpha g}{\alpha} \partial_\alpha \mathcal{E}(\alpha,\cdot)\dalpha.
\end{aligned}
\ee
We now have to estimate the coefficients $\mathcal{E}(\alpha,\cdot)$ 
and $\partial_\alpha\mathcal{E}(\alpha,\cdot)$.
\begin{lemma}
$i)$ We have 
\be\label{nQ1}
\mathcal{E}(\alpha,x)=\frac{f_x(x)^2}{1+f_x(x)^2}+Q(\alpha,x)
\ee
for some function $Q$ satisfying
\be\label{nQ2}
\la Q(\alpha,x)\ra\les \frac{\la s_\alpha f(x)\ra}{\la\alpha\ra}+ \la \frac{1}{\alpha} \int_0^\alpha s_{\eta}f_{x}(x) \deta\ra.
\ee
$ii)$ Furthermore,
\be\label{nE3}
\la \partial_\alpha \mathcal{E}(\alpha,x)\ra 
\le C \left\{
\frac{\la \bar\delta_{\alpha} f_x(x)\ra}{\la\alpha\ra}+
\frac{\la \delta_{\alpha} f_x(x)\ra}{\la\alpha\ra}+\frac{\la s_{\alpha}f(x)\ra}{\la\alpha\ra^2}+
\la \frac{1}{\alpha^2}{\int_{0}^{\alpha} s_{\eta}f_{x} (x)\deta}\ra\right\}
\ee
for some fixed constant $C$.
\end{lemma}
\begin{proof}
$i)$ We introduce the function
$$
F(a)=\frac{a^2}{1+a^2}.
$$
Then we have the identity \e{nQ1} with
\be\label{n700-1}
Q\defn \mez \left(F(\Delta_\alpha f)+F(\bar\Delta_\alpha f)\right)-F(f_x).
\ee
Since $F''$ is bounded, 
the Taylor formula implies that, 
for all $(a,b)\in \xR^2$,
$$
\la \mez (F(a)+F(b))-F\left(\frac{a+b}{2}\right)\ra\le \frac{\lA F''\rA_{L^\infty}}{8} \la a-b\ra^2.
$$
On the other hand, since 
$F$ is bounded, one has the obvious inequality
$$
\la \mez (F(a)+F(b))-F\left(\frac{a+b}{2}\right)\ra\le 2 \lA F\rA_{L^\infty}.
$$
By combining these two inequalities, we find that
$$
\la \mez (F(a)+F(b))-F\left(\frac{a+b}{2}\right)\ra\les \la a-b\ra.
$$
Since $F$ is even we have $F(b)=F(-b)$ and hence
$$
\la \mez (F(a)+F(b))
-F\left(\frac{a-b}{2}\right)\ra\les \la a+b\ra.
$$
We now apply this inequality with $a=\Delta_\alpha f$ and $b=\bar\Delta_\alpha f$. 
Since, by definition, $D_\alpha f=\Delta_\alpha f-\bar\Delta_\alpha f$, 
$S_\alpha f=\Delta_\alpha f+\bar\Delta_\alpha f$, we conclude that
\be\label{n700}
\la \mez (F(\Delta_\alpha f)+F(\bar\Delta_\alpha f))
-F\left(\mez D_\alpha f\right)\ra\les \la S_\alpha f\ra.
\ee
We now use the fact that $F$ 
is Lipschitz to infer from \e{n:Dalpha} that
\be\label{n701}
\la F\left(\mez D_\alpha f\right)
-F(f_x)\ra\les \la \frac{1}{\alpha}
\int_0^\alpha s_\eta f_x\deta\ra.
\ee
In light of \e{n700-1}, by using the triangle inequality, it follows from 
\e{n700} and 
\e{n701} that
$$
\la Q\ra\les \la S_\alpha f\ra+\la \frac{1}{\alpha}
\int_0^\alpha s_\eta f_x\deta\ra,
$$
which gives the result~\e{nQ2}.

$ii)$ Since
$$
\mathcal{E}(\alpha,\cdot)=\mez F(\Delta_\alpha f)+\mez F(\bar\Delta_\alpha f),
$$
and since $F'$ is bounded, the chain rule implies that 
$$
\la \partial_\alpha\mathcal{E}(\alpha,\cdot)\ra 
\les \la \partial_\alpha \Delta_\alpha f\ra+\la \partial_\alpha \bar\Delta_\alpha f\ra.
$$
By combining this estimate with the identities
$$
2\Delta_\alpha f=S_\alpha f+D_\alpha f, \quad 
2\bar\Delta_\alpha f=S_\alpha f-D_\alpha f,
$$
we deduce that 
$\la \partial_\alpha\mathcal{E}(\alpha,\cdot)\ra 
\les \la \partial_\alpha S_\alpha f\ra+\la \partial_\alpha D_\alpha f\ra$. 
Then the second estimate \e{nE3} follows from the 
values for $\partial_\alpha S_\alpha f$ and $\partial_\alpha D_\alpha f$ given by Lemma~\ref{L:partialalpha}.
\end{proof}

It follows directly from \e{nE1} and \e{nQ1}  that
\be\label{nE4}
\begin{aligned}
\mathcal{T}_{e}(f)g&=\frac{1}{2\pi}\frac{f_x^2}{1+f_x^2}
\int \frac{s_\alpha g}{\alpha^2} \dalpha\\
&\quad 
 +\frac{1}{2\pi}\int\frac{s_\alpha g}{\alpha} \left(\frac{Q(\alpha,\cdot)}{\alpha}-\partial_\alpha \mathcal{E}(\alpha,\cdot)\right)\dalpha.
\end{aligned}
\ee
Observe that
\begin{align*}
\int  \frac{s_{\alpha}g}{\alpha^2} \dalpha 
&=-\int s_{\alpha}g \partial_\alpha \left(\frac 1\alpha\right)\dalpha\\
&= \int \frac{\partial_\alpha s_{\alpha}g}{\alpha}\dalpha
=\int\frac{g_x(x-\alpha)-g_x(x+\alpha)}{\alpha}\dalpha\\
&=-2\int \Delta_\alpha g_x\dalpha= 
2\pi \Lambda g,
\end{align*}
where we used~\eqref{defiHilbertLambda}. So, the first term in the right-hand side of \e{nE4} is the wanted elliptic component
$$
\frac{f_x^2}{1+f_x^2}\Lambda g.
$$
To conclude the proof of the first statement in \e{T11}, 
it remains only to prove that the second term in the right-hand side 
of \e{nE4} is a remainder term. Putting for shortness
$$
I=\lA \int\frac{s_\alpha g}{\alpha} \left(\frac{Q(\alpha,\cdot)}{\alpha}-\partial_\alpha \mathcal{E}(\alpha,\cdot)\right)\dalpha\rA_{L^2},
$$
we will prove that 
\be\label{ndesired1}
I\les \lA f\rA_{\dot{H}^{\tdm+\eps}}\lA g\rA_{\dot{B}^{1-\eps}_{2,1}}.
\ee 
The $L^\infty$-norm of $\frac{Q(\alpha,\cdot)}{\alpha}-\partial_\alpha \mathcal{E}(\alpha,\cdot)$ is controlled 
from \e{nQ2} and~\e{nE3}. We have
$$
 I \les  I_1  +  I_2  + I_3 + I_4 
$$
with 
\be\label{n:I1234}
\begin{aligned}
I_1
&= \int \frac{\Vert s_{\alpha}g  \Vert_{L^{2}}}{\vert \alpha \vert^{1-\eps}}\frac{\Vert \bar\delta_{\alpha}f_x \Vert_{L^{\infty}}}{\vert \alpha \vert^{\eps}}  \frac{\dalpha}{\la \alpha\ra}, \\
I_2
&= \int \frac{\Vert s_{\alpha}g \Vert_{L^{2}}}{\vert \alpha \vert^{1-\eps}}\frac{\Vert \delta_{\alpha}f_x \Vert_{L^{\infty}}}{\vert \alpha \vert^{\eps}}  \frac{\dalpha}{\la \alpha\ra}, \\
I_3
&=  \int  \frac{\Vert s_{\alpha}g \Vert_{L^{2}}}{\vert \alpha \vert^{1-\eps}}
\frac{\Vert s_{\alpha}f \Vert_{L^{\infty}}}{\vert \alpha \vert^{1+\eps}}   \frac{\dalpha}{\la \alpha\ra}, \\
I_4 &=  \int  \frac{\Vert s_{\alpha}g \Vert_{L^{2}}}{\la\alpha\ra^{1-\eps}}
\frac{1}{\la\alpha\ra^{1+\eps}}   \la\int_0^\alpha \Vert s_{\eta}f_x \Vert_{L^{\infty}} \deta \ra\frac{\dalpha}{\la\alpha\ra},
\end{aligned}
\ee
where, as above, we have distributed the powers of $\la\alpha\ra$ in a balanced way. 
Using the Cauchy-Schwarz inequality and the definition~\e{Bs<1} of Besov semi-norms, 
it follows that
$$
I_1+I_2\le  \Vert g \Vert_{\dot B^{1-\eps}_{2,2}} \Vert f_{x} \Vert_{\dot B^{\eps}_{\infty,2}}. 
$$
and, similarly, it results from~\e{Besov2} that
$$
I_3\le  \Vert g \Vert_{\dot B^{1-\eps}_{2,2}} \Vert f \Vert_{\dot B^{1+\eps}_{\infty,2}}. 
$$
Consequently, the Sobolev embeddings
$$
\dot B^{1-\eps}_{2,1}\hookrightarrow \dot B^{1-\eps}_{2,2},\qquad
\dot H^{\tdm+\eps}(\xR)\hookrightarrow \dot B^{\eps}_{\infty,2}(\xR),
$$
imply that $I_1+I_2+I_3\les \lA g\rA_{\dot{B}^{1-\eps}_{2,1}}\lA f\rA_{\dot{H}^{\tdm+\eps}}$. 

To estimate $I_4$, the key point consists in using 
the Cauchy-Schwarz inequality to verify that
$$
\frac{1}{\la\alpha\ra^{1+\eps}}   \la\int_0^\alpha \Vert s_{\eta}f_x \Vert_{L^{\infty}} \deta \ra
\les \left(\int_0^\infty \frac{\Vert s_{\mu}f_x \Vert^{2}_{L^{\infty}}}{\mu^{2\eps}} \frac{\dmu}{\mu} \right)^{\mez}
$$
(notice that the variable $\eta$ above could be negative, while $\mu$ here is always positive). 
It follows from~\e{Bs<1} that 
$$
\left(\int_0^\infty \frac{\Vert s_{\mu}f_x \Vert^{2}_{L^{\infty}}}{\mu^{2\eps}} \frac{\dmu}{\mu} \right)^{\mez}
\les \lA f_x\rA_{\dot{B}^{\eps}_{\infty,2}}\les \lA f\rA_{\dot{H}^{\tdm+\eps}}, 
$$
we obtain, using again~\e{Bs<1} with $(p,q,s)=(2,1,1-\eps)$,
\be\label{n9}
I_4  \les  \lA f\rA_{\dot{H}^{\tdm+\eps}}
\int\frac{\Vert s_{\alpha}g \Vert_{L^{2}}}{\la\alpha\ra^{1-\eps}}\frac{\dalpha}{\la \alpha\ra}
\les\lA f\rA_{\dot{H}^{\tdm+\eps}}\lA g\rA_{\dot{B}^{1-\eps}_{2,1}}.
\ee
This completes the proof of \e{ndesired1} and hence the proof of the desired result~\e{T11}.

$ii)$ It remains to study $\mathcal{T}_{o}(f)g$. Recall that
$$\mathcal{T}_{o}(f)g=-\frac{1}{\pi}
\int \big( \Delta_{\alpha} g_{x}\big) \mathcal{O}(\alpha,\cdot)\dalpha,
$$
where $\mathcal{O}(\alpha,\cdot)$ is given by \e{defi:O}. 
By splitting the factor $\Delta_\alpha g_x$ into two parts
$$
\Delta_\alpha g_x(x)=\frac{g_x(x)}{\alpha}-\frac{g_x(x-\alpha)}{\alpha},
$$
we obtain at once that
$$
\mathcal{T}_{o}(f)g=V\partial_x g +B
$$
where $V$ is given by~\e{T12} and where the remainder $B$ is given by
\begin{equation*}
B(x)=\frac{1}{\pi} \int \frac{1}{\alpha}g_{x}(x-\alpha)\mathcal{O}(\alpha,x) \dalpha.
\end{equation*}
The analysis of $B$ is based on the observation that
$$
g_{x}(x-\alpha)=\partial_{\alpha}(g(x)-g(x-\alpha))=\partial_\alpha (\delta_\alpha g),
$$
which allows to integrate by parts in $\alpha$, to obtain
$$
B=\frac{1}{\pi} \int \frac{\delta_\alpha g }{\alpha}\left(\frac{1}{\alpha}\mathcal{O}(\alpha,\cdot)- \partial_\alpha  \mathcal{O}(\alpha,\cdot) \right)\dalpha.
$$
Consequently, by writing
$$
\lA B \rA_{L^2} \les  
\int  \frac{\Vert \delta_\alpha g \Vert_{L^{2}}}{\la \alpha\ra} 
\lA \frac{1}{\alpha}\mathcal{O}(\alpha,\cdot)- \partial_\alpha  \mathcal{O}(\alpha,\cdot)
\rA_{L^\infty}\frac{\dalpha}{\la\alpha\ra},
$$
we are back to the situation already treated in the first step. 
The estimate for $\partial_\alpha \mathcal{O}$ is proved by repeating the 
arguments used to prove the estimate~\e{nE3} for $\partial_\alpha \mathcal{E}$. 
To bound $\alpha^{-1}\mathcal{O}(\alpha,x)$, remembering the expression 
of $\mathcal{O}(\alpha,x)$ given by \e{defi:O}, it is sufficient to notice that
\be\label{esti:O}
\begin{aligned}
\la \frac{\mathcal{O}(\alpha,\cdot)}{\alpha}\ra
&=\frac{1}{2\la\alpha\ra} \la  \frac{(\Delta_\alpha f)^2}{1+(\Delta_\alpha f)^2}
-\frac{(\bar\Delta_\alpha f)^2}{1+(\bar\Delta_\alpha f)^2}\ra \\
&\le \frac{1}{2\la\alpha\ra}\la \frac{\Delta_\alpha f-\bar\Delta_\alpha f}{(1+(\Delta_\alpha f)^2)(1+(\bar\Delta_\alpha f)^2}
\ra
\la \Delta_\alpha f+\bar\Delta_\alpha f\ra\\
&\le \frac{\la S_\alpha f\ra}{\la\alpha\ra}\cdot
\end{aligned}
\ee
This gives that 
$\la \mathcal{O}(\alpha,x)\ra\les \la s_\alpha f(x)\ra/\la\alpha\ra^2$. 
Therefore, we obtain that the $L^\infty$-norm of $\frac{\mathcal{O}(\alpha,\cdot)}{\alpha}-\partial_\alpha \mathcal{O}(\alpha,\cdot)$ is estimated by the right-hand side of~\e{nE3}. 
Then we may repeat the arguments used in the proof of the first step to estimate $I$. 
We call the attention 
to the fact that, previously, in \e{n:I1234}, the expressions involved the more favorable 
symmetric differences $s_\alpha g$ instead of $\delta_\alpha g$. 
However, this is not important for our purpose since, to estimate $I_1,\ldots,I_4$, we used only the characterization of Besov norms valid for $0<s<1$, 
which involves only the finite differences  $\delta_\alpha f$. 
This proves that $\lA B \rA_{L^2}$ is controlled by the right-hand side of \e{ndesired1}, 
which implies that $B\sim 0$. 
\end{proof}

\begin{lemma} \label{hold}
Let $0<\nu<\eps<1/2$. 
There exists a positive constant $C>0$ such that
$$
\lA V\rA_{C^{0,\nu}}=\lA V\rA_{L^\infty}+\sup_{y\in \xR}\bigg(\frac{\la V(x+y)-V(x)\ra}{\la y\ra^\nu}\bigg)
\le C\lA f\rA_{\dot{H}^1\cap \dot{H}^{\frac{3}{2}+\eps}}^2.
$$
\end{lemma}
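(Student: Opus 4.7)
The starting point is the factorization (recorded in \eqref{esti:O})
\[
\mathcal{O}(\alpha,x)=\mez\,D_\alpha f(x)\,S_\alpha f(x)\,G(\alpha,x),\qquad |G(\alpha,x)|\le 1,
\]
which reduces pointwise estimates of $V$ to bilinear-in-$f$-finite-difference integrals of the form $\int|D_\alpha f||S_\alpha f|/|\alpha|\,\dalpha$. The Sobolev embedding $L^2\cap\dot H^{\mez+\eps}\hookrightarrow L^\infty$ gives $\lA f_x\rA_{L^\infty}\les\lA f\rA_{\dot H^1\cap\dot H^{\tdm+\eps}}$, so $|D_\alpha f|\le 2\lA f_x\rA_{L^\infty}$ uniformly.

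For the $L^\infty$ bound I split the integral at $|\alpha|=1$. On $\{|\alpha|\le 1\}$ I bound $|D_\alpha f|\le 2\lA f_x\rA_{L^\infty}$ and apply Cauchy-Schwarz to $|S_\alpha f|/|\alpha|=|s_\alpha f|/\alpha^2$ with weights $|\alpha|^{-(1+\eps)}$ and $|\alpha|^{\eps}$ in $d\alpha/|\alpha|$, so that one factor yields the convergent integral $\int_{|\alpha|\le 1}|\alpha|^{2\eps-1}\dalpha$ and the other the Besov quantity $\lA f\rA_{\dot B^{1+\eps}_{\infty,2}}^2\les\lA f\rA_{\dot H^{\tdm+\eps}}^2$ via \eqref{SE}. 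On $\{|\alpha|\ge 1\}$ I use $|\alpha|^{-3}\le|\alpha|^{-2}$ together with $s_\alpha f=\delta_\alpha f+\delta_{-\alpha}f$ and $\alpha D_\alpha f=\delta_\alpha f-\delta_{-\alpha}f$, apply Cauchy-Schwarz directly, and bound the contribution by $\lA f\rA_{\dot B^{\mez}_{\infty,2}}^2\les\lA f\rA_{\dot H^1}^2$. Combining the two regimes yields $\lA V\rA_{L^\infty}\les\lA f\rA_{\dot H^1\cap\dot H^{\tdm+\eps}}^2$.

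For the Hölder seminorm I set $g=\tau_y f-f$ with $\tau_y f(x)=f(x+y)$ and expand $\mathcal{O}(\alpha,x+y)-\mathcal{O}(\alpha,x)$ by a first-order Taylor expansion of $F(a)=a^2/(1+a^2)$. Writing $A=\Delta_\alpha f(x)$ and $B=\bar\Delta_\alpha f(x)$ and using the pointwise bounds $|F'(c)|\le 2|c|$ and $|F''(c)|\le C$, the integrand decomposes as a sum of terms each bilinear in finite differences of $f$ and $g$, of the shape
\[
|\Delta_\alpha g|\bigl(|D_\alpha f|+|D_\alpha g|\bigr)+|D_\alpha g|\bigl(|\bar\Delta_\alpha f|+|\bar\Delta_\alpha g|\bigr).
\]
Each such piece has the same structure as the integrand of $V$, with exactly one copy of $f$ replaced by $g$ (or both replaced, yielding a better quadratic-in-$g$ remainder). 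Applying the Cauchy-Schwarz scheme of the previous paragraph with the weight exponent shifted from $\eps$ to $\eps-\nu>0$ produces bounds of the form $\lA f\rA_{\dot H^1\cap\dot H^{\tdm+\eps}}\lA g\rA_{\dot H^1\cap\dot H^{\tdm+\eps-\nu}}$. The elementary identity $\lA\tau_y h-h\rA_{\dot H^s}\le C|y|^\nu\lA h\rA_{\dot H^{s+\nu}}$ for $\nu\in[0,1]$, together with Sobolev interpolation within $\dot H^1\cap\dot H^{\tdm+\eps}$, then gives $\lA g\rA_{\dot H^1\cap\dot H^{\tdm+\eps-\nu}}\les|y|^\nu\lA f\rA_{\dot H^1\cap\dot H^{\tdm+\eps}}$, which closes the estimate.

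The main technical obstacle is this Hölder bound: the expansion of $\mathcal{O}_{\tau_y f}-\mathcal{O}_f$ must be arranged so that only bilinear products of $f$- and $g$-finite differences appear, never a trilinear product of $f$-derivatives. This relies crucially on the sharpened pointwise bound $|F'(c)|\le 2|c|$ rather than the crude uniform $|F'|\les 1$ when estimating the intermediate $F'$-type coefficients in the Taylor expansion. The strict inequality $\nu<\eps$ then provides exactly the cushion needed for the residual Sobolev index $\tdm+\eps-\nu$ to remain above $\tdm$, so that all Cauchy-Schwarz weights stay integrable and all Besov norms can be closed in terms of $\dot H^1$ and $\dot H^{\tdm+\eps}$.
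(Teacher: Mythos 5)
Your proposal is correct and follows essentially the same strategy as the paper's proof: the core bilinear structure $\mathcal{O}(\alpha,x)\lesssim |D_\alpha f||S_\alpha f|$, the balanced-weight Cauchy--Schwarz into Besov semi-norms and the embedding $\dot H^{\sigma+\mez}\hookrightarrow\dot B^\sigma_{\infty,2}$, and the Plancherel estimate $\lA\tau_y h-h\rA_{\dot H^\sigma}\lesssim|y|^\nu\lA h\rA_{\dot H^{\sigma+\nu}}$ to convert the Hölder quotient into a derivative loss of order $\nu$, with the cushion $\nu<\eps$ keeping all indices in the admissible range.

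Two implementation differences are worth noting, neither of which affects correctness. For the $L^\infty$ bound, you split at $|\alpha|=1$ and trade $D_\alpha f$ for $\lA f_x\rA_{L^\infty}$ on short scales; the paper avoids the splitting entirely by putting both factors into Besov norms directly, distributing the weight as $|\alpha|^{1-\eps}$ and $|\alpha|^{1+\eps}$, which gives $\lA f\rA_{\dot B^{1-\eps}_{\infty,2}}\lA f\rA_{\dot B^{1+\eps}_{\infty,2}}$ in a single pass. For the Hölder seminorm, you argue via the mean-value theorem for $F(a)=a^2/(1+a^2)$ and the sharpened bound $|F'(c)|\le 2|c|$; the paper avoids Taylor expansion by using the explicit factorization $\mathcal{O}=M_\alpha(f)S_\alpha f$ (which already carries the good factor $D_\alpha f$) together with the product difference rule $\tau_y(uv)-uv=(\tau_y u-u)\tau_y v+u(\tau_y v-v)$ and the global Lipschitz property of $(a,b)\mapsto\frac{1}{2}\frac{a-b}{(1+a^2)(1+b^2)}$. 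The paper's route is a bit cleaner because it separates cleanly into exactly two bilinear terms, whereas the Taylor expansion route requires careful bookkeeping of where the intermediate points $\xi_1,\xi_2$ sit; but your argument is morally equivalent and the stated pointwise bounds suffice to close it.

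One small typo in your write-up: after Cauchy--Schwarz on $\{|\alpha|\le1\}$ you get $\lA f\rA_{\dot B^{1+\eps}_{\infty,2}}$ to the first power (not squared), and the second factor of $\lA f\rA$ comes from $\lA f_x\rA_{L^\infty}\lesssim\lA f\rA_{\dot H^1\cap\dot H^{\tdm+\eps}}$.
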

\begin{proof}
As already seen, we have
$$
V(x)=-\frac{1}{\pi}\int_\xR \frac{\mathcal{O}(\alpha,x)}{\alpha} \dalpha,
$$
where
$$
\mathcal{O}(\alpha,\cdot)=M_\alpha(f)S_\alpha f\quad\text{with}\quad 
M_\alpha(f)=\mez \frac{\Delta_\alpha f-\bar\Delta_\alpha f}{(1+(\bar\Delta_\alpha f)^2)(1+(\bar\Delta_\alpha f)^2}.
$$
Since $\la M_\alpha (f)\ra\le \la \Delta_\alpha f\ra+\la\bar\Delta_\alpha f\ra$, we obtain that
\begin{align*}
\lA V\rA_{L^\infty}&\le 2 \int \big(\lA \Delta_\alpha f\rA_{L^\infty}+\lA\bar\Delta_\alpha f\rA_{L^\infty}\big)\lA S_\alpha f\rA_{L^\infty}\frac{\dalpha}{\la\alpha\ra}\\
&\le \int \frac{\lA \delta_\alpha f\rA_{L^\infty}+\lA\bar\delta_\alpha f\rA_{L^\infty}}{\la\alpha\ra^{1-\eps}}
\frac{\lA s_\alpha f\rA_{L^\infty}}{\la \alpha\ra^{1+\eps}}\frac{\dalpha}{\la\alpha\ra}\\
&\le \lA f\rA_{\dot{B}^{1-\eps}_{\infty,2}}\lA f\rA_{\dot{B}^{1+\eps}_{\infty,2}}\les 
\lA f\rA_{\dot{H}^{1}\cap \dot{H}^{\tdm+\eps}}^2,
\end{align*}
where we used the Cauchy-Schwarz inequality, the definitions~\e{Bs<1} and~\e{Besov2} of the Besov semi-norms, and the Sobolev embedding.

We now have to estimate the H\"older-modulus of continuity of $V$. Given $y\in \xR$ and a function $u=u(x)$, 
we introduce the function $[u]_y$ defined by
$$
[u]_y(x)=\frac{u(x+y)-u(x)}{\la y\ra^{\nu}}\cdot
$$
We want to estimate the $L^\infty$-norm of $[V]_y$ uniformly in $y\in \xR$. 
Notice that
$$
[\mathcal{O}(\alpha,\cdot)]_y=M_\alpha(f)S_\alpha ([f]_y)+[M_\alpha (f)]_y \tau_y(S_\alpha f)
$$
where $\tau_y u(x)=u(x+y)$. The contribution of the first term is estimated as above: 
by setting $\delta=\eps-\nu>0$, we have
\begin{align*}
\int \lA M_\alpha(f)S_\alpha ([f]_y)\rA_{L^\infty}\dalpha
&\le 
\int \big(\lA \Delta_\alpha f\rA_{L^\infty}+\lA\bar\Delta_\alpha f\rA_{L^\infty}\big)
\lA S_\alpha [f]_y\rA_{L^\infty}\frac{\dalpha}{\la\alpha\ra}\\
&\le \int \frac{\lA \delta_\alpha f\rA_{L^\infty}+\lA\bar\delta_\alpha f\rA_{L^\infty}}{\la\alpha\ra^{1-\delta}}
\frac{\lA s_\alpha [f]_y\rA_{L^\infty}}{\la \alpha\ra^{1+\delta}}\frac{\dalpha}{\la\alpha\ra}\\
&\le \lA f\rA_{\dot{B}^{1-\delta}_{\infty,2}}\lA [f]_y\rA_{\dot{B}^{1+\delta}_{\infty,2}}.
\end{align*}
Now, using Plancherel theorem and the inequality $\la e^{iy\xi}-1\ra\le \la y\xi\ra^\nu$, we have
$$
\lA [f]_y\rA_{\dot{B}^{1+\delta}_{\infty,2}}\les \lA [f]_y\rA_{\dot{H}^{\tdm+\delta}}\les \lA f\rA_{\dot{H}^{\tdm+\delta+\nu}}
=\lA f\rA_{\dot{H}^{\tdm+\eps}},
$$
since $\delta+\nu=\eps$. 
On the other hand, since
$$
\la [M_\alpha (f)]_y\ra\les \la \Delta_\alpha [f]_y\ra+\la \bar\Delta_\alpha [f]_y\ra,
$$
by repeating the previous arguments, we get
\begin{align*}
\int \lA [M_\alpha (f)]_y \tau_y(S_\alpha f)\rA_{L^\infty}\dalpha
&\le \int \frac{\lA \delta_\alpha [f]_y\rA_{L^\infty}+\lA\bar\delta_\alpha [f]_y\rA_{L^\infty}}{\la\alpha\ra^{1-\nu}}
\frac{\lA s_\alpha f\rA_{L^\infty}}{\la \alpha\ra^{1+\nu}}\frac{\dalpha}{\la\alpha\ra}\\
&\le \lA [f]_y\rA_{\dot{B}^{1-\nu}_{\infty,2}}\lA f\rA_{\dot{B}^{1+\nu}_{\infty,2}}
\les \lA f\rA_{\dot{H}^{\tdm}}\lA f\rA_{\dot{H}^{\tdm+\nu}}.
\end{align*}
This concludes the proof of Lemma~\ref{hold} 
\end{proof}
This completes the proof of Theorem~\ref{T1}.

\section{Cauchy problem}

In this section we prove Theorem~\ref{T:Cauchy} about the Cauchy problem. 

We prove the uniqueness by estimating the difference of two solutions. 
With regards to the existence, we construct solutions to the Muskat equation 
as limits of solutions to a sequence of approximate nonlinear systems, 
following here \cite{ABZ1,ABZ3,LannesJAMS,LannesLivre}. 
We split the analysis in three parts.
\begin{enumerate}
\item Firstly, we prove that the Cauchy problem for these approximate 
systems are well-posed locally in time by means of an ODE argument.
\item Secondly, we use Theorem~\ref{T1} and an elementary $L^2$-estimate for 
the paralinearized equation to prove that 
the solutions of the later approximate systems are 
bounded in $C^0([0,T];\dot{H}^1(\xR)\cap\dot{H}^s(\xR))$ 
on a uniform time interval.
\item The third task consists in showing that these approximate solutions converge to a limit which is 
a solution of the Muskat equation. To do this, one cannot apply standard compactness results since 
the equation is non-local. Instead, we prove that 
the solutions form a Cauchy sequence in an appropriate space, by estimating 
the difference of two solutions.
\end{enumerate}

\subsection{Approximate systems}\label{S:approximate}
To define approximate systems, we use a 
version of Galerkin's method based on Friedrichs mollifiers. 
We find convenient to use smoothing operators 
which are projections and consider, for $n\in \xN\setminus\{0\}$, the operators $J_n$ defined by 
\begin{alignat*}{3}
\widehat{J_n u}(\xi)&=\hat{u}(\xi) \quad &&\text{for} &&\la \xi\ra\le n,\\
\widehat{J_n u}(\xi)&=0 \quad &&\text{for} &&\la \xi\ra> n.
\end{alignat*}
Notice that $J_n$ is a projection, $J_n^2=J_n$. This will allow us
 to simplify some technical arguments. 
 
Now we consider the following approximate Cauchy problems:
\begin{equation}\label{A3}
\left\{
\begin{aligned}
&\partial_t f+\Lambda f=J_n\big(\mathcal{T}(f)f\big),\\
& f\arrowvert_{t=0}=J_n f_{0}.
\end{aligned}
\right.
\end{equation} 
The following lemma states that this system has smooth local in time solutions.

\begin{lemma}
For all $f_0\in \dot{H}^{1}(\xR)$, and any $n\in \xN\setminus\{0\}$, the initial value 
problem~\e{A3} has a unique maximal solution, for some time $T_n>0$, 
of the form $f_n=J_nf_0+u_n$ where $u_n\in C^{1}([0,T_n[;H^{\infty}(\xR))$ 
is such that $u_n(0)=0$. Moreover, either 
\be\label{A4}
T_n=+\infty\qquad\text{or}\qquad \limsup_{t\rightarrow T_n} \lA u_n(t)\rA_{L^2}=+\infty.
\ee
\end{lemma}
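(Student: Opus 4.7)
The plan is to recast the approximate system as an ODE in the Banach space
$$
E_n \defn \{ u\in L^2(\xR) \,:\, \supp \hat u \subset \{|\xi|\le n\}\},
$$
equipped with the $L^2$ norm. Writing $f=J_nf_0+u$ with $u(0)=0$, the equation for $u$ becomes
$$
\partial_t u = \Phi_n(u)\defn -\Lambda u -\Lambda(J_nf_0)+J_n\big(\mathcal{T}(J_nf_0+u)(J_nf_0+u)\big).
$$
Since $\Lambda$ is a Fourier multiplier that commutes with $J_n$ and the nonlinear contribution is already projected by $J_n$, the space $E_n$ is invariant under $\Phi_n$, so any $L^2$-solution starting at $0$ stays in $E_n$. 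Note that $\Lambda(J_nf_0)$ is a genuine $L^2$ function because $\||\xi|\widehat{J_nf_0}\|_{L^2}\le \lA f_0\rA_{\dot H^1}$, even though $J_nf_0$ itself need not be in $L^2$.

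The main step is to verify that $\Phi_n$ is locally Lipschitz from $E_n$ to $E_n$. The linear piece $\Lambda u$ is controlled by Bernstein's inequality, $\lA \Lambda u\rA_{L^2}\le n\lA u\rA_{L^2}$ for $u\in E_n$. For the nonlinear piece, Proposition~\ref{P:continuity}$.\ref{Prop:low2})$ tells us that $f\mapsto \mathcal{T}(f)f$ is locally Lipschitz from $\dot H^1\cap \dot H^{\tdm}$ to $L^2$. For $u,v\in E_n$, all Sobolev norms are equivalent on the frequency-truncated subspace: Bernstein gives
$$
\lA J_nf_0+u\rA_{\dot H^1\cap \dot H^{\tdm}}\le \lA f_0\rA_{\dot H^1}+ n^{\tdm}\lA u\rA_{L^2},
\qquad \lA u-v\rA_{\dot H^1\cap \dot H^{\tdm}}\le n^{\tdm}\lA u-v\rA_{L^2}.
$$
Combined with boundedness of $J_n$ on $L^2$, this yields a local Lipschitz estimate
$$
\lA \Phi_n(u)-\Phi_n(v)\rA_{L^2}\le C(n,\lA f_0\rA_{\dot H^1},R)\lA u-v\rA_{L^2}
$$
on every ball $\lA u\rA_{L^2},\lA v\rA_{L^2}\le R$.

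With this in hand, the Cauchy--Lipschitz (Picard--Lindel\"of) theorem in the Banach space $E_n$ produces a unique maximal solution $u_n\in C^1([0,T_n);E_n)$ with $u_n(0)=0$. Because $u_n(t)\in E_n$ is frequency-truncated to $|\xi|\le n$, Bernstein's inequality gives $\lA u_n(t)\rA_{H^s}\le C_n(s)\lA u_n(t)\rA_{L^2}$ for every $s\ge 0$, so $u_n\in C^1([0,T_n);H^\infty(\xR))$ and $f_n=J_nf_0+u_n$ satisfies~\e{A3}. The blow-up alternative~\e{A4} is the standard consequence of maximality: if $T_n<+\infty$ and $\lA u_n(t)\rA_{L^2}$ remained bounded as $t\to T_n$, one could reapply the local existence result at a time arbitrarily close to $T_n$ to extend the solution beyond $T_n$, contradicting maximality.

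The only (minor) obstacle is the book-keeping around Bernstein's inequality: the nonlinear Lipschitz constant blows up like a power of $n$, which is harmless at this stage since $n$ is fixed, but it is precisely this loss that makes the subsequent uniform-in-$n$ estimate the genuine work of the next subsection. The rest is a textbook ODE argument in a Banach space.
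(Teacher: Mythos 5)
Your proof is correct and arrives at the same conclusion, but via a genuinely different route from the paper. The paper introduces the auxiliary ``twisted'' system
$$
\partial_t f+J_n\Lambda f=J_n\big(\mathcal{T}(J_n f)J_n f\big),\qquad f\arrowvert_{t=0}=J_n f_0,
$$
and, setting $u=f-J_nf_0$, obtains a vector field $F_n(u)=-\Lambda J_n u-\Lambda J_n f_0+J_n(\cdots)$ that is locally Lipschitz on all of $L^2(\xR)$, precisely because each $\Lambda$ is shielded by a smoothing $J_n$. Cauchy--Lipschitz in $L^2$ then applies, and only a posteriori does one check that $(I-J_n)f_n\equiv 0$, so that the twisted solution also solves \eqref{A3}. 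You instead work directly in the closed subspace $E_n=\{u\in L^2:\ \supp\hat u\subset\{|\xi|\le n\}\}$, where Bernstein's inequality already makes $\Lambda$ and all the Sobolev norms tame, so the vector field coming straight from \eqref{A3} is locally Lipschitz on $E_n$ with no need for the twisted system. Both strategies are valid; the paper's buys you a clean Lipschitz bound that does not depend on $n$ through Bernstein's constants (only through $J_n$ being smoothing), whereas yours avoids the auxiliary system and the $J_n^2=J_n$ bookkeeping, at the cost of constants blowing up in $n$ --- which, as you rightly note, is irrelevant at fixed $n$.

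One small point worth making explicit in your write-up. You assert that ``any $L^2$-solution starting at $0$ stays in $E_n$'' by invoking invariance of $E_n$ under $\Phi_n$, but for the uniqueness statement as claimed (unique among $f_n$ with $u_n\in C^1([0,T_n);H^\infty)$), you actually need that \emph{any} $H^\infty$-valued solution of \eqref{A3} with $u_n(0)=0$ is automatically $E_n$-valued. This follows by applying $I-J_n$ to the equation: $(I-J_n)u_n$ solves $\partial_t w+\Lambda w=0$ with $w(0)=0$, and the $L^2$-energy identity then forces $w\equiv 0$. The paper faces exactly the same issue (a solution of \eqref{A3} must be shown to solve the twisted system before the $L^2$-Cauchy--Lipschitz uniqueness applies), and handles it with the same informality, so this is at worst a shared omission rather than a gap specific to your approach.
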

\begin{proof}
We begin by studying an auxiliary system. Consider the following Cauchy problem
\begin{equation}\label{A3-twisted}
\left\{
\begin{aligned}
&\partial_t f+J_n\Lambda f=J_n\big(\mathcal{T}(J_n f)J_n f\big),\\
& f\arrowvert_{t=0}=J_n f_{0}.
\end{aligned}
\right.
\end{equation} 
Set $u=f-J_nf_0$. Then the Cauchy problem \eqref{A3-twisted} has the form
\begin{equation}\label{edo}
\partial_t u= F_n(u),\quad u\arrowvert_{t=0}=0,
\end{equation}
where 
$$
F_n(u)=-\Lambda J_n u-\Lambda J_n f_0+J_n\big(\mathcal{T}(J_n(f_0+u))J_n(f_0+u)\big)
$$
(we have used $J_n^2=J_n$ to simplify the expression of $F$). 
The operator $J_n$ is a smoothing operator: it is bounded from $\dot{H}^1(\xR)$ into $\dot{H}^\mu(\xR)$ for any $\mu\ge 1$, and from 
$L^2(\xR)$ into $H^\mu(\xR)$ for any $\mu\ge 0$. Consequently, 
if $u$ belongs to $L^{2}(\xR)$, then $J_n(f_0+u)$ belongs to $\dot{H}^\mu(\xR)$ for any 
$\mu\ge 1$. Thus, it follows from statement~$\ref{Prop:low1})$ in Proposition~\ref{P:continuity} and the assumption $f_0\in \dot{H}^1(\xR)$ 
that $F_n$ maps $L^2(\xR)$ into itself. This shows that~\eqref{edo} is in fact an ODE with values 
in a Banach space for any $n\in\xN\setminus\{0\}$. 
The key point is that statement~$\ref{Prop:low2})$ in Proposition~\ref{P:continuity} implies that 
the function $F_n$ is locally Lipschitz 
from $L^{2}(\xR)$ to itself. 
Consequently, the Cauchy-Lipschitz theorem gives 
the existence of a unique maximal solution~$u_n$ 
in~$C^{1}([0,T_n[;L^{2}(\xR))$. Then the function $f_n=J_nf_0+u_n$ is a solution to \e{A3-twisted}. 
Since $J_n^2=J_n$, we check that the function $(I-J_n)f_n$ solves
$$
\partial_t (I-J_n)f_n=0,\quad (I-J_n)f_n\arrowvert_{t=0}=0.
$$
This shows that $(I-J_n)f_n=0$, so $J_nf_n=f_n$. Consequently, 
the fact that $f_n$ solves \e{A3-twisted} implies 
that $f_n$ is also a solution to~\e{A3}. 

The alternative \e{A4} is a consequence of the usual 
continuation principle for ordinary differential equations. 
Eventually, integrating~\e{edo} in time and using the fact that $J_n$ is a smoothing operator, 
we obtain that $u_n$ belongs to $C^0([0,T_n[;H^{\infty}(\xR))$. Using again~\e{edo}, we conclude 
that $\partial_t u_n$ belong to $C^0([0,T_n[;H^{\infty}(\xR))$. 
\end{proof}

\subsection{A priori estimate for the approximate systems}

In this paragraph we prove two {\em a priori} estimates which will play a key role to prove uniform 
estimates for the solutions $(f_n)$ and also to estimate the differences between two such solutions. 
We begin with the following estimate in $L^2(\xR)\cap \dot{H}^s(\xR)$.

\begin{prop}\label{Prop:aprioriHs}
For all real number $s\in (3/2,2)$, there exists 
a positive constant $C>0$ and 
a non-decreasing function $\mathcal{F}\colon\xR\rightarrow\xR$ such that, any 
$n\in \xN\setminus\{0\}$, for any $T\in (0,T_n)$,  the norm 
$$
M_n(T)=\sup_{t\in [0,T]}\lA f_n(t)-f_0\rA_{L^2\cap \dot{H}^s}^2
$$
satisfies
\be\label{apriori}
\begin{aligned}
&M_n(T)+\frac{C}{1+K^2}\int_0^T\lA f_n(t)\rA_{\dot{H}^{s+\mez}}^2\dt\\
&\qquad\qquad\qquad\le (2+T)^2\lA f_0\rA_{\dot{H}^1\cap \dot{H}^s}^2
+T\mathcal{F}\big(\sup_{t\in [0,T]}\lA f_n(t)\rA_{\dot{H}^1\cap \dot{H}^s}^2\big),
\end{aligned}
\ee
where
\be\label{boundK}
K\defn \sup_{(t,x)\in [0,T]\times\xR}\la \partial_x f_n(t,x)\ra.
\ee
\end{prop}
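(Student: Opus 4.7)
The plan is to derive separately an $L^2$-estimate for $u_n = f_n - f_0$ and an $\dot{H}^s$-estimate for $f_n$, then combine them. For the $L^2$ part, I would test the equation $\partial_t u_n + \Lambda u_n = -\Lambda f_0 + J_n\mathcal{T}(f_n)f_n$ against $u_n$: the fixed forcing $-\Lambda f_0$ is absorbed by Young's inequality against the dissipation $\|\Lambda^{1/2}u_n\|_{L^2}^2$, and the nonlinear contribution is bounded via Theorem~\ref{T1}(i) by $C\|f_n\|_{\dot{H}^1}\|f_n\|_{\dot{H}^{3/2}}$, with $\|f_n\|_{\dot{H}^{3/2}}$ dominated by interpolation from $\|f_n\|_{\dot{H}^1 \cap \dot{H}^s}$ since $s > 3/2$. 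Integration in time yields the $L^2$ part of $M_n(T)$, the $(2+T)^2\|f_0\|^2$ coefficient coming from the time-integrated contribution of $\Lambda f_0$ together with the triangle-type bound $\|f_n\|_{L^2 \cap \dot{H}^s}^2 \le 2\|u_n\|_{L^2 \cap \dot{H}^s}^2 + 2\|f_0\|_{\dot{H}^s}^2$.

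For the $\dot{H}^s$ part---the core of the argument---I would apply $\Lambda^s$ to the approximate equation and test against $\Lambda^s f_n$. Using that $J_n$ is a self-adjoint projection with $J_n f_n = f_n$, this gives
\[
\frac{1}{2}\frac{d}{dt}\|\Lambda^s f_n\|_{L^2}^2 + \|\Lambda^{s+\frac{1}{2}} f_n\|_{L^2}^2 = \langle \mathcal{T}(f_n)f_n, \Lambda^{2s} f_n\rangle.
\]
Substituting the paralinearization $\mathcal{T}(f_n)f_n = \gamma(f_n)\Lambda f_n + V(f_n)\partial_x f_n + R(f_n,f_n)$ from Theorem~\ref{T1}(ii), the crucial step is to symmetrize the $\gamma$-contribution, which should read
\[
\int \gamma(f_n)\,\Lambda f_n \cdot \Lambda^{2s} f_n\, dx = \int \gamma(f_n)\bigl(\Lambda^{s+\frac{1}{2}} f_n\bigr)^2 dx + E,
\]
where $E$ is a commutator term controlled using the H\"older regularity $\gamma(f_n) \in C^{0,\nu}$ (a consequence of $f_n \in \dot{H}^s$ with $s > 3/2$ via Sobolev embedding). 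Since $1 - \gamma(f_n) = 1/(1+\partial_x f_n^2) \ge 1/(1+K^2)$, moving this symmetrized piece to the left-hand side produces the parabolic gain with sharp coefficient $1/(1+K^2)$ in front of $\|\Lambda^{s+\frac{1}{2}} f_n\|_{L^2}^2$.

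The remaining pieces are then handled as lower-order perturbations. The transport-type term $\int V(f_n)\partial_x f_n \cdot \Lambda^{2s} f_n\, dx$ is controlled using $V(f_n) \in C^{0,\nu}$ (Lemma~\ref{hold}) and Cauchy--Schwarz, redistributing derivatives so that each side carries $\Lambda^{s+\frac{1}{2}}$; a small portion can then be absorbed into the parabolic dissipation via Young's inequality. The remainder $\int R(f_n, f_n) \cdot \Lambda^{2s} f_n\, dx$ is decomposed via Theorem~\ref{T1}(iii) and Corollary~\ref{Coro:para}, which control $R_\eps(f_n)$ in $H^{1+\eps}$; pairing this against $\Lambda^{s+\frac{1}{2}} f_n$ is legitimate provided $\eps \ge s - \tfrac{3}{2}$, which is achievable since $s < 2$ allows $\eps$ close to $1/2$. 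Everything left behind is bounded by $\mathcal{F}(\|f_n\|_{\dot{H}^1 \cap \dot{H}^s})$, and integration in time combined with the $L^2$ estimate produces the claimed bound.

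The main obstacle is precisely this coercive symmetrization extracting the sharp coefficient $1/(1+K^2)$: the commutator $[\gamma(f_n), \Lambda^{s-\frac{1}{2}}]$ must be controlled using only the H\"older regularity of $\gamma(f_n)$, and each accompanying remainder term must be verified to live at order strictly below $s + \tfrac{1}{2}$ so that it can be absorbed into the dissipation. This is what dictates the careful coordination between the paralinearization in Theorem~\ref{T1}(ii), the commutator estimate Theorem~\ref{T1}(iii), and the high-regularity remainder bound in Corollary~\ref{Coro:para}.
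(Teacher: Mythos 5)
Your overall architecture matches the paper's: treat the $L^2$ and $\dot H^s$ components separately, paralinearize, extract the coercive piece with coefficient $1/(1+K^2)$, and control the rest as lower order. But the two sub-steps you describe diverge from the paper's execution in ways that deserve comment.

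For the low-frequency estimate, you propose an energy estimate, absorbing the forcing $-\Lambda f_0$ by Young's inequality against the dissipation $\|\Lambda^{1/2} u_n\|_{L^2}^2$. As stated this requires controlling $\|\Lambda^{1/2} f_0\|_{L^2} = \|f_0\|_{\dot H^{1/2}}$, which is not among the hypotheses ($f_0$ is only assumed in $\dot H^1\cap\dot H^s$, $s>3/2$). The paper sidesteps this by using the Duhamel representation $f_n(t)=e^{-t\Lambda}J_nf_0 + \int_0^t e^{-(t-t')\Lambda}\mathcal{T}_n(f_n)\,dt'$, estimating $(e^{-t\Lambda}-I)J_nf_0$ directly in $L^2$ by $t\|f_0\|_{\dot H^1}$. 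An energy estimate can be rescued by the cruder bound $|\langle\Lambda f_0, u_n\rangle|\le \|f_0\|_{\dot H^1}\|u_n\|_{L^2}$ and Gronwall, but then nothing is gained over Duhamel.

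For the $\dot H^s$ step, the key divergence is the symmetrization. You substitute the paralinearization of Theorem~\ref{T1}~\ref{T1:high}) applied to the pair $(f_n,f_n)$, obtaining $\gamma(f_n)\Lambda f_n$ as the principal term, and then manually commute $\Lambda^{s-1/2}$ past $\gamma(f_n)$ to produce the coercive integral $\int\gamma(f_n)(\Lambda^{s+1/2}f_n)^2\,dx$. This requires a commutator estimate of the form $\|[\Lambda^{s-1/2},\gamma]u\|_{L^2}\lesssim\|\gamma\|_{C^{0,\nu}}\|u\|_{\dot H^{s-1/2-\nu}}$. Such an estimate is standard in paradifferential calculus, but it is \emph{not} among the tools proved in the paper --- Lemma~\ref{L:Hilbert} handles only the commutator $[\mathcal H, f^\flat]$ --- and one of the paper's stated aims is to avoid relying on general paradifferential results. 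The paper avoids this commutator entirely: it writes $\Lambda^{1+\eps}\mathcal T(f_n)f_n = \mathcal T(f_n)\Lambda^{1+\eps}f_n + [\Lambda^{1+\eps},\mathcal T(f_n)]f_n$ with $\eps=s-3/2$, applies Theorem~\ref{T1}~\ref{T1:high}) to the pair $(f_n,\Lambda^{1+\eps}f_n)$, so that the principal term comes out directly as $\gamma(f_n)\Lambda^{s+1/2}f_n$ with no further commutation needed, and controls the bracket by Theorem~\ref{T1}~\ref{T1:commutator}). Your route is not wrong, but as written it leaves a genuine gap (the unproved $[\Lambda^{s-1/2},\gamma]$ estimate) that the paper's organization of the paralinearization is specifically designed to circumvent.

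Finally, a small point: you treat $\eps$ as a parameter to be chosen close to $1/2$, but the clean choice is $\eps=s-3/2$ exactly, which makes $\Lambda^{1+\eps}=\Lambda^{s-1/2}$ and lines up all the exponents without slack.
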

\begin{proof}
Set $\mathcal{T}_n(f)=J_n\big(\mathcal{T}(f)f\big)$. 
We estimate the $\lA \cdot\rA_{L^2}$-norm 
and $\lA \cdot\rA_{\dot{H}^s}$-norm by different methods. 

\smallbreak

\noindent{\em First step : low-frequency estimate.} Since
\be\label{n1509}
\partial_t f_n+\Lambda f_n=J_n\mathcal{T}(f_n)f_n,\quad f_n\arrowvert_{t=0}=J_nf_0,
\ee
we have
$$
f_n(t)=\exp\left(-t\Lambda\right)J_nf_0+\int_0^t \exp\left(-(t-t')\Lambda\right)\mathcal{T}_n(f_n)(t')\dt',
$$
so
$$
f_n(t)-J_nf_0
=\left(\exp\left(-t\Lambda\right)-I\right)J_nf_0+\int_0^t \exp\left(-(t-t')\Lambda\right)\mathcal{T}_n(f_n)(t')\dt',
$$
where $I$ denotes the identity operator. Using the Fourier transform and Plancherel identity, one 
obtains immediately that
$$
\lA \left(\exp\left(-t\Lambda\right)-I\right)J_nf_0\rA_{L^2}\le \lA t\Lambda J_nf_0\rA_{L^2}
\le  T\lA f_0\rA_{\dot{H}^1}.
$$
On the other hand, 
$$
\lA \exp\left(-(t-t')\Lambda\right)\mathcal{T}_n(f_n)(t')\rA_{L^2}\le \lA \mathcal{T}_n(f_n)(t')\rA_{L^2}.
$$
Consequently, 
$$
\lA f_n(t)-J_nf_0\rA_{L^2}\le T\lA f_0\rA_{\dot{H}^1}+T\sup_{t'\in [0,T]}
\lA \mathcal{T}_n(f_n)(t')\rA_{L^2}.
$$
Now we want to replace the left hand side 
of the above inequality by $\lA f_n(t)-f_0\rA_{L^2}$. To do so, notice that, 
since the spectrum of $J_nf_0-f_0$ is contained in $\{\la\xi\ra\ge 1\}$, we have
$$
\lA J_nf_0-f_0\rA_{L^2}\le \lA f_0\rA_{\dot{H}^1}.
$$
By combining the above estimates, we deduce that
$$
\lA f_n(t)-f_0\rA_{L^2}\le (1+T)\lA f_0\rA_{\dot{H}^1}+T\sup_{t'\in [0,T]}
\lA \mathcal{T}_n(f_n)(t')\rA_{L^2}.
$$
Now, we estimate the $L^2$-norm of the nonlinearity $\mathcal{T}_n(f_n)$ by means of the 
first statement in Theorem~\ref{T1}. 
We conclude that, for $T<1$,
$$
\lA f_n(t)-f_0\rA_{L^2}^2\le  2(1+T)^2\lA f_0\rA_{\dot{H}^1}^2+C T^2\sup_{[0,T]}\lA f_n\rA_{\dot{H}^1}^2
\sup_{[0,T]}\lA f_n\rA_{\dot{H}^{\tdm}}^2.
$$
This is in turn estimated by the right side of \e{apriori}. This concludes the first step.

\bigbreak

\noindent{\em Second step : High frequency estimate.} 
Denote by $(\cdot,\cdot)$ the scalar product in $L^2(\xR)$. To estimate the $\dot{H}^s$-norm of $f_n$, we 
make act $\Lambda^s$ on the equation, and then take its scalar product with $\Lambda^s f_n$. We get
$$
(\partial_t \Lambda^s f_n,\Lambda^{s}f_n)+(\Lambda^{s+1} f_n,\Lambda^{s} f_n)
= \big(\Lambda^{s}\mathcal{T}_n(f_n),\Lambda^{s}f_n\big).
$$
Since the Muskat equation is parabolic of order one, we will be able to gain one half-derivative. 
We exploit this parabolic regularity 
by writing that
$$
(\Lambda^{s+1} f_n,\Lambda^{s} f_n)=
\blA  f_n\brA_{\dot{H}^{s+\mez}}^2,
$$
and
\begin{align*}
\big( \Lambda^{s}\mathcal{T}_n(f_n),\Lambda^{s}f_n\big)
&=\big(\Lambda^{s}J_n\big(\mathcal{T}(f_n)f_n\big),\Lambda^{s}f_n\big)\\
&=\big(\Lambda^{s}\big(\mathcal{T}(f_n)f_n\big),J_n\Lambda^{s}f_n\big)\\
&=\big(\Lambda^{s}\big(\mathcal{T}(f_n)f_n\big),\Lambda^{s}f_n\big) \quad\text{since }J_nf_n=f_n,\\
&=\big(\Lambda^{s-\mez} \mathcal{T}(f_n)f_n, \Lambda^{s+\mez}f_n\big).
\end{align*}
Consequently, we find 
$$
\mez  \fract\Vert f_n \Vert^{2}_{\dot H^{s}} +\blA  f_n\brA_{\dot{H}^{s+\mez}}^2 =  
\big(\Lambda^{s-\mez}\mathcal{T}(f_n)f_n, \Lambda^{s+\mez}f_n\big).
$$
We next use a variant of the paralinearization formula given by Corollary~\ref{Coro:para}. 
Set 
$$
\eps=s-\tdm.
$$
We claim that, for any function $g$,
$$
\Lambda^{1+\eps}(T(g)g)=V(g)\partial_x \Lambda^{1+\eps} g+\frac{g_x^2}{1+g_x^2}\Lambda^{2+\eps}g+\Lambda^{1+\eps}R_\eps(g)
$$
where $V(g)$ and $R_\eps(g)$ are two functions satisfying, for any fixed $\nu<\eps$,
\begin{align}
\lA R_\eps(g)\rA_{\dot{H}^{1+\eps}}&\le 
\mathcal{F}(\lA g\rA_{\dot{H}^1\cap \dot{H}^{s}})\lA g\rA_{\dot{H}^{s+\mez-\frac{\eps}2}},\label{n1001}\\
\lA V(g)\rA_{C^{0,\nu}}&\le \mathcal{F}(\lA g\rA_{\dot{H}^1\cap \dot{H}^{s}})\lA g\rA_{\dot{H}^{s+\mez-\frac{\eps}2}}.
\label{n1002}
\end{align}
where $\mathcal{F}$ depends only on $\eps$ (that is $s$) and $\nu$ (which will be specified later). 
The proof of this claim is similar to the one of \e{coro:paraeq}. 

With notations as above, set
$$
V_n=V(f_n),\quad R_n=\Lambda^{1+\eps}R_{\eps}(f_n), \quad \gamma_n=\frac{f_{nx}^2}{1+f_{nx}^2}
\quad\text{where }f_{nx}=\partial_xf_n.
$$
Then,
\be\label{n100}
\begin{aligned}
\frac{1}{2}  \fract\Vert f_n \Vert^{2}_{\dot H^{s}}+\lA f_n\rA_{\dot{H}^{s+\mez}}^2
&=  \big( \gamma_n \Lambda^{s+\mez}f_n,\Lambda^{s+\mez}f_n , \big)\\
&\quad +\Big(\big(V_n\partial_x \Lambda^{s-\mez}f_n+R_n\big), \Lambda^{s+\mez}f_n \Big).
\end{aligned}
\ee
Now the key point is that
$$
\lA f_n\rA_{\dot{H}^{s+\mez}}^2
-  \big(\gamma_n\Lambda^{s+\mez}f_n, \Lambda^{s+\mez}f_n \big)
= \int  \frac{\big( \Lambda^{s+\mez}f_n \big)^2}{1+f_{nx}^2} \dx.
$$
On the other hand, 
the Cauchy-Schwarz inequality and the estimate~\e{n1001} imply that
\begin{align*}
\big\vert \big( R_n,\Lambda^{s+\mez}f_n\big)\big\vert &\le \lA R_n\rA_{L^2}\blA \Lambda^{s+\mez}f_n\brA_{L^2}\\
&\le 
\mathcal{F}(\lA f_n\rA_{\dot{H}^1\cap \dot{H}^{s}})\lA f_n\rA_{\dot{H}^{s+\mez-\frac{\eps}2}}\lA f_n\rA_{\dot{H}^{s+\mez}}.
\end{align*}

It remains to estimate the contribution of $V_n$ to the 
second term in the right-hand side of \e{n100}. 
Here we use the commutator estimate given by Lemma~\ref{L:Hilbert}. 
To do so, one uses the identity $\mathcal{H}\Lambda=-\partial_x$  where $\mathcal{H}$ is the Hilbert transform, to write
$$
V_n\partial_x \Lambda^{s-\mez}f_n=-V_n \Lambda^{s+\mez}\mathcal{H}f_n.
$$ 
Since $\mathcal{H}$ is skew-symmetric, we deduce that
$$
\Big(  V_n\partial_x \Lambda^{s-\mez}f_n,\Lambda^{s+\mez}f_n\Big)
=\mez \Big(\big[ \mathcal{H},V_n\big]\Lambda^{s+\mez}f_n,\Lambda^{s+\mez}f_n \Big).
$$
Now we exploit the regularity result for $V_n$ given by~\e{n1002}. Fix $\nu=2\eps/3$ and $\theta=\eps/2$. 
By applying the commutator estimate in Lemma~\ref{L:Hilbert}, we obtain
\begin{align*}
\blA \big[\mathcal{H},V_n\big]\Lambda^{s+\mez}f_n\brA_{L^2}
&\les \lA V_n\rA_{C^{0,\nu}}\blA \Lambda^{s+\mez}f_n\brA_{H^{-\theta}}\\
&\les \lA V_n\rA_{C^{0,\nu}}\lA f_n\rA_{\dot{H}^{s+\mez-\frac{\eps}2}}\\
&\le \mathcal{F}(\lA f_n\rA_{\dot{H}^1\cap \dot{H}^{s}})\lA f_n\rA_{\dot{H}^{s+\mez-\frac{\eps}2}}.
\end{align*}

So, by combining the above estimates, 
$$
\frac{1}{2}  \fract\Vert f_n \Vert^{2}_{\dot H^{s}} +\int  \frac{\big( \Lambda^{s+\mez}f_n \big)^2}{1+f_{nx}^2} \dx
\le \mathcal{F}(\lA f_n\rA_{\dot{H}^1\cap \dot{H}^{s}})\lA f_n\rA_{\dot{H}^{s+\mez-\frac{\eps}2}}\lA f_n\rA_{\dot{H}^{s+\mez}}.
$$
The end of the proof will consist in exploiting the parabolic regularity and a variant of Gronwall's lemma 
to absorb the right-hand side. Set $K(t) =\sup_{x\in \xR}\la \partial_x f_n(t,x)\ra$. 
Then
$$
\frac{1}{1+f_{nx}^2}\ge \frac{1}{1+K^2},
$$
so 
\be\label{n101}
\begin{aligned}
\frac{1}{2}  \fract\Vert f_n \Vert^{2}_{\dot H^{s}} +\frac{C}{1+K^2}\lA f_n\rA_{\dot{H}^{s+\mez}}^2
\le \mathcal{F}(\lA f_n\rA_{\dot{H}^1\cap \dot{H}^{s}})\lA f_n\rA_{\dot{H}^{s+\mez-\frac{\eps}2}}\lA f_n\rA_{\dot{H}^{s+\mez}}.
\end{aligned}
\ee
Then we observe that
\begin{multline*}
\mathcal{F}(\lA f_n\rA_{\dot{H}^1\cap \dot{H}^{s}})\lA f_n\rA_{\dot{H}^{s+\mez-\frac{\eps}2}}\lA f_n\rA_{\dot{H}^{s+\mez}}\\
\le \frac{C}{2(1+K^2)}\lA f_n\rA_{\dot{H}^{s+\mez}}^2+\frac{1+K^2}{2C}
\mathcal{F}(\lA f_n\rA_{\dot{H}^1\cap \dot{H}^{s}})^2\lA f_n\rA_{\dot{H}^{s+\mez-\frac{\eps}2}}^2.
\end{multline*}
Since $K\les \lA f_n\rA_{\dot{H}^1\cap \dot{H}^{s}}$ by Sobolev embedding, 
up to modifying the value of the function $\mathcal{F}$, 
by inserting the above inequality in~\e{n101}, we get
\be\label{n101-bis}
\begin{aligned}
\frac{1}{2}  \fract\Vert f_n \Vert^{2}_{\dot H^{s}} +\frac{C}{2(1+K^2)}\lA f_n\rA_{\dot{H}^{s+\mez}}^2
\le \mathcal{F}(\lA f_n\rA_{\dot{H}^1\cap \dot{H}^{s}})\lA f_n\rA_{\dot{H}^{s+\mez-\frac{\eps}2}}^2.
\end{aligned}
\ee
To conclude, it will suffice to replace in the right side the norm 
$\lA f_n\rA_{\dot{H}^{s+\mez-\frac{\eps}2}}^2$ by $\lA f_n\rA_{\dot{H}^{s}}^2$. 
To do this, we begin by using the interpolation inequality:
\be\label{n1503}
\lA f_n\rA_{\dot{H}^{s+\mez-\frac{\eps}2}}^2\le \lA f_n\rA_{\dot{H}^{s}}^{2\theta}
\lA f_n\rA_{\dot{H}^{s+\mez}}^{2-2\theta},
\ee
for some $\theta\in (0,1)$. Next, because of the Young's inequality
\be\label{n1504}
xy\le \frac{1}{p}x^p+\frac{1}{p'}y^{p'}\quad\text{with}\quad \frac{1}{p}+\frac{1}{p'}=1,
\ee
applied with $p=2/(2-2\theta)$, we infer that
\be\label{n1505}
\frac{1}{2}  \fract\Vert f_n \Vert^{2}_{\dot H^{s}} +\frac{C}{4(1+K^2)}\lA f_n\rA_{\dot{H}^{s+\mez}}^2
\le \mathcal{F}(\lA f_n\rA_{\dot{H}^1\cap \dot{H}^{s}})\lA f_n\rA_{\dot{H}^{s}}^2,
\ee
where as above we modified the value of the function $\mathcal{F}$. 
From this, it is now an easy matter to obtain the conclusion of the proposition. 
Firstly, integration of the above estimate gives
\begin{multline*}
\mez\Vert f_n(t) \Vert^{2}_{\dot H^{s}} +\frac{C}{4(1+K^2)}\int_0^t \lA f_n(t')\rA_{\dot{H}^{s+\mez}}^2\dt'\\
\le \mez\Vert f_n(0) \Vert^{2}_{\dot H^{s}}+t\sup_{t'\in [0,t]} \mathcal{F}(\lA f_n(t')\rA_{\dot{H}^1\cap \dot{H}^{s}})\lA f_n(t')\rA_{\dot{H}^{s}}^2.
\end{multline*}
Modifying $\mathcal{F}(\cdot)$ and $C$, we deduce that
$$
\Vert f_n(t) \Vert^{2}_{\dot H^{s}} +\frac{C}{1+K^2}\int_0^t \lA f_n(t')\rA_{\dot{H}^{s+\mez}}^2\dt'
\le \Vert f_n(0) \Vert^{2}_{\dot H^{s}}
+T\mathcal{F}\Big(\sup_{t\in[0,T]}\lA f_n(t)\rA_{\dot{H}^1\cap \dot{H}^s}^2\Big),
$$
for any $t\in [0,T]$. By taking the supremum over $t\in [0,T]$, we deduce an estimate for 
$\sup_{t\in [0,T]}\lA f_n(t)\rA_{\dot{H}^s}^2$. Now, the desired estimate for 
$\sup_{t\in [0,T]}\lA f_n(t)-f_0\rA_{\dot{H}^s}^2$ follows from the triangle inequality 
and the fact that $\Vert f_n(0) \Vert_{\dot H^{s}}\le \Vert f_0 \Vert_{\dot H^{s}}$.
\end{proof}

We will also need another energy estimate to compare 
two different solutions $f_1$ and $f_2$. The main difficulty here 
will be to find the optimal space in which one can perform an energy estimate. 
The most simpler way to do so would be to estimate their 
difference $f_1-f_2$ in the biggest possible space and to 
use an interpolation inequality to control the latter 
in a space of smoother function. 
This suggests to estimate $f_1-f_2$ in $C^0([0,T];L^2(\xR))$. On the other hand, by thinking 
of the fluid problem, we might think that it is compulsory to control the difference between the two functions parametrizing the two free surfaces 
in a space of smooth functions. We will see later that, somewhat unexpectedly, that it is enough to 
estimate $f_1-f_2$ in $C^0([0,T];\dot{H}^{1/2}(\xR))$.  
In this direction, we will use the following proposition.

\begin{prop}\label{P:energyL2}
$i)$ For all $s$ in $(3/2,2)$, there exists a non-decreasing function $\mathcal{F}\colon \xR_+\rightarrow \xR_+$ 
such that, for any $n\in \xN\setminus\{0\}$, any $T>0$, and any functions
\begin{align*}
f&\in C^0([0,T];\dot{H}^1(\xR)\cap\dot{H}^s(\xR)), \\
g&\in C^1([0,T];H^\mez(\xR))\quad\text{with }J_ng=g,\\
F&\in C^0([0,T];L^2(\xR)),
\end{align*}
satisfying the equation
\begin{equation}\label{n1508}
\partial_tg-J_n\big(V(f)\partial_x g)+J_n\Big(\frac{1}{1+f_x^2}\Lambda g\Big)=F,
\ee
where $V(f)$ is as above, we have the estimate
\begin{equation}\label{n1507}
\frac{1}{2}  \fract\Vert g \Vert^{2}_{\dot{H}^\mez} +\int  \frac{(\Lambda g)^2}{1+f_x^2} \dx
\le \mathcal{F}(\lA f\rA_{\dot{H}^1\cap \dot{H}^{s}})\lA  g\rA_{\dot{H}^{1-\frac{\eps}2}}\lA g\rA_{\dot{H}^1}
+( F,\Lambda g)_{L^2}.
\end{equation}
where $\eps=s-3/2$ and $C=\mathcal{F}\big(\lA f\rA_{L^\infty([0,T];\dot{H}^1\cap\dot{H}^s)}\big)$.

$ii)$ Moreover, the same result is true when one replaces $J_n$ by the identity.
\end{prop}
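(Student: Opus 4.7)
The plan is to test equation~\eqref{n1508} against $\Lambda g$ in $L^2(\xR)$. Since $\Lambda$ is self-adjoint with symbol $|\xi|$, the time derivative contributes $(\partial_t g,\Lambda g)=\tfrac12\frac{d}{dt}\lA g\rA_{\dot H^{1/2}}^2$. Using $J_n^\ast=J_n$ together with the hypothesis $J_ng=g$, which yields $J_n\Lambda g=\Lambda g$, the projector can be moved off the two nonlinear terms on the left-hand side; the elliptic contribution then becomes $\bigl(\tfrac{1}{1+f_x^2}\Lambda g,\Lambda g\bigr)=\int \frac{(\Lambda g)^2}{1+f_x^2}\dx$, which is precisely the parabolic term in~\eqref{n1507}. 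We are thereby reduced to establishing
\[
\tfrac12\frac{d}{dt}\lA g\rA_{\dot H^{1/2}}^2+\int\frac{(\Lambda g)^2}{1+f_x^2}\dx=\bigl(V(f)\partial_x g,\Lambda g\bigr)+(F,\Lambda g),
\]
and then controlling the transport pairing $(V(f)\partial_x g,\Lambda g)$ by the first term on the right of~\eqref{n1507}.

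The main obstacle is exactly this transport term, since it has the same formal order as the parabolic contribution and cannot be handled by a brutal Cauchy--Schwarz bound. The key observation is that, using $\partial_x=-\mathcal H\Lambda$ and the antisymmetry $\mathcal H^\ast=-\mathcal H$, the top-order piece cancels by symmetrization:
\[
\bigl(V(f)\partial_x g,\Lambda g\bigr)=-\bigl(V(f)\mathcal H\Lambda g,\Lambda g\bigr)=-\tfrac12\bigl([V(f),\mathcal H]\Lambda g,\Lambda g\bigr),
\]
so only a commutator remains. I then fix $\theta=\eps/2$ and choose $\nu\in(\theta,\eps)$, which is admissible since $\eps=s-\tdm$ lies in $(0,1/2)$. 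Applying Lemma~\ref{L:Hilbert} and then estimate~\eqref{n401} of Theorem~\ref{T1} (noting $\tdm+\eps=s$), and using that $(1+|\xi|^2)^{-\theta}|\xi|^2\lesssim|\xi|^{2-2\theta}$ gives $\lA \Lambda g\rA_{H^{-\theta}}\lesssim \lA g\rA_{\dot H^{1-\theta}}$, one obtains
\[
\lA [V(f),\mathcal H]\Lambda g\rA_{L^2}\lesssim \lA V(f)\rA_{C^{0,\nu}}\lA g\rA_{\dot H^{1-\eps/2}}\le \mathcal{F}\bigl(\lA f\rA_{\dot H^1\cap\dot H^s}\bigr)\lA g\rA_{\dot H^{1-\eps/2}}.
\]
Combining with $\lA \Lambda g\rA_{L^2}=\lA g\rA_{\dot H^1}$ and one more Cauchy--Schwarz delivers the required bound, while the term $(F,\Lambda g)$ appears unchanged.

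Part~ii) is obtained verbatim from the same argument: the identity $J_ng=g$ was used only to remove the projectors, so when $J_n$ is replaced by the identity there is simply nothing to remove and the computation proceeds identically. In summary, the content of the proof is the cancellation of the principal part of the transport term via the antisymmetry of $\mathcal H$, together with the H\"older regularity of $V(f)$ supplied by Theorem~\ref{T1}, which lets the surviving commutator lose only a strict fraction $\eps/2$ of derivative and thereby closes the estimate against the parabolic dissipation.
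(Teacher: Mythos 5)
Your proof is correct and follows essentially the same route as the paper: test against $\Lambda g$, drop the projectors using $J_ng=g$, symmetrize the transport term via $\partial_x=-\mathcal H\Lambda$ and $\mathcal H^*=-\mathcal H$ to isolate the commutator $[\mathcal H,V(f)]$, then close with Lemma~\ref{L:Hilbert} combined with the H\"older bound~\eqref{n401} on $V(f)$. The only cosmetic difference is that you leave $\nu$ as any value in $(\eps/2,\eps)$ while the paper fixes $\nu=2\eps/3$; both are admissible.
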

\begin{proof}
To prove~\e{n1507} we take the $L^2$-scalar product of the equation \e{n1508} with $\Lambda g$. 
Since
$$
\frac{1}{2}  \fract\Vert g \Vert^{2}_{\dot{H}^\mez}=(\partial_t g,\Lambda g),\qquad 
\Big(J_n\Big(\frac{1}{1+f_x^2}\Lambda g\Big),\Lambda g\Big)=\int  \frac{( \Lambda g)^2}{1+f_x^2} \dx,
$$
(where we used $J_ng=g$), 
we only have to estimate $\big(J_n\big(V(f)\partial_x g),\Lambda g\big)$. As above, 
writing $\partial_x=-\mathcal{H}\Lambda$, where $\mathcal{H}$ is the Hilbert transform satisfying 
$\mathcal{H}^*=-\mathcal{H}$, we obtain
$$
\la \big(J_n\big(V(f)\partial_x  g),\Lambda g\big)\ra
=\mez\la \big( \big[ \mathcal{H},V(f)\big]\Lambda g,\Lambda g\big)\ra.
$$
Set $\eps=s-3/2$, $\nu=2\eps/3$ and $\theta=\eps/2$. We use Lemma~\ref{L:Hilbert} to obtain
\begin{align*}
\la \big(J_n\big(V(f)\partial_x  g),\Lambda g\big)\ra
&\le \mez\lA \big[\mathcal{H},V(f)\big]\Lambda g\rA_{L^2} \lA \Lambda g\rA_{L^2}\\
&\le \lA V(f)\rA_{C^{0,\nu}}\lA \Lambda g\rA_{H^{-\theta}} \lA \Lambda g\rA_{L^2}\\
&\le \mathcal{F}(\lA f\rA_{\dot{H}^1\cap \dot{H}^{s}}) \lA \Lambda g\rA_{H^{-\frac{\eps}{2}}}
\lA \Lambda g\rA_{L^2}\\
&\le \mathcal{F}(\lA f\rA_{\dot{H}^1\cap \dot{H}^{s}})\lA g\rA_{\dot{H}^{1-\frac{\eps}2}}\lA g\rA_{\dot{H}^1}.
\end{align*}
This completes the proof of $i)$ and the same arguments can be used to prove $ii)$.
\end{proof}

\subsection{End of the proof}

In this paragraph we complete the analysis of the Cauchy problem. 
We begin by proving the uniqueness part in Theorem~\ref{T1}.
\begin{lemma}\label{L:uniqueness}
Assume that $f$ and $f'$ are two solutions of the Muskat equation with the same initial data and satisfying the assumptions of Theorem~\ref{T:Cauchy}. 
Then $f=f'$. 
\end{lemma}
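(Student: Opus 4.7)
The plan is to control the difference $w \defn f - f'$ in the space $\dot{H}^{\mez}(\xR)$, as foreshadowed in the paragraph preceding Proposition~\ref{P:energyL2}. I split the nonlinearity as
\[
\mathcal{T}(f)f - \mathcal{T}(f')f' = \mathcal{T}(f)w + \big(\mathcal{T}(f) - \mathcal{T}(f')\big)f',
\]
and apply the high frequency expansion \eqref{n3} to $\mathcal{T}(f)w$; this rewrites the evolution of $w$ as
\[
\partial_t w - V(f)\partial_x w + \frac{1}{1+f_x^2}\Lambda w = F, \qquad F = R(f,w) + \big(\mathcal{T}(f)-\mathcal{T}(f')\big)f'.
\]
This is exactly the form handled by Proposition~\ref{P:energyL2} $ii)$, whose hypotheses are satisfied since $w \in C^1([0,T]; H^{s-1})$ with $s-1 > \mez$. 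With $\eps \defn s - \tdm$, applying it yields
\[
\mez \fract \lA w\rA_{\dot H^{\mez}}^2 + \int \frac{(\Lambda w)^2}{1+f_x^2}\dx \le \mathcal{F}\bigl(\lA f\rA_{\dot H^1\cap\dot H^s}\bigr)\lA w\rA_{\dot H^{1-\frac{\eps}{2}}}\lA w\rA_{\dot H^1} + (F,\Lambda w)_{L^2}.
\]

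Next I would close the estimate by bounding every term on the right by an expression of the form $\lA w\rA_{\dot H^{\mez}}^{\theta}\lA w\rA_{\dot H^1}^{2-\theta}$ with $\theta>0$, and then using Young's inequality to transfer a small multiple of $\lA w\rA_{\dot H^1}^2$ to the left, where it is absorbed by the parabolic dissipation thanks to the bound $\int (\Lambda w)^2/(1+f_x^2)\dx \ge (1+K^2)^{-1}\lA w\rA_{\dot H^1}^2$ with $K = \lA f_x\rA_{L^\infty_{t,x}}<+\infty$. The first term $\lA w\rA_{\dot H^{1-\eps/2}}\lA w\rA_{\dot H^1}$ already has this shape, by direct interpolation between $\dot H^{\mez}$ and $\dot H^1$ (which gives $\lA w\rA_{\dot H^{1-\eps/2}}\le\lA w\rA_{\dot H^{\mez}}^{\eps}\lA w\rA_{\dot H^1}^{1-\eps}$). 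For the contribution of $R(f,w)$, I bound $(R(f,w),\Lambda w)_{L^2} \le \lA R(f,w)\rA_{L^2}\lA w\rA_{\dot H^1}$, invoke \eqref{n400}, and control $\lA w\rA_{\dot B^{1-\eps}_{2,1}}$ via Lemma~\ref{L:B21} followed by the same interpolation. For the contribution of $(\mathcal{T}(f)-\mathcal{T}(f'))f'$, I would apply Proposition~\ref{P:continuity}~$\ref{Prop:low3})$ with $\delta=\eps$, which yields $\lA (\mathcal{T}(f)-\mathcal{T}(f'))f'\rA_{L^2} \les \lA w\rA_{\dot H^{1-\eps}}\lA f'\rA_{\dot H^s}$, and then interpolate $\lA w\rA_{\dot H^{1-\eps}} \le \lA w\rA_{\dot H^{\mez}}^{2\eps}\lA w\rA_{\dot H^1}^{1-2\eps}$.

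Carrying out all the absorptions leaves a differential inequality
\[
\fract \lA w(t)\rA_{\dot H^{\mez}}^2 \le C(t)\lA w(t)\rA_{\dot H^{\mez}}^2,
\]
where $C(t)$ depends only on $\lA f\rA_{\dot H^1\cap\dot H^s}$, $\lA f'\rA_{\dot H^s}$ and $K$, and is locally bounded on $[0,T]$. Since $w(0)=0$, Gronwall's lemma forces $w \equiv 0$, i.e.\ $f = f'$.

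The main obstacle is the absorption step. Working at the precise low regularity $\dot H^{\mez}$ (rather than $L^2$) is essential: the gain of $\mez$ derivative opened by the parabolic dissipation is exactly what allows the cost of order $1-\eps$ on $w$ incurred by \emph{both} the remainder $R(f,w)$ and the quasilinear difference $(\mathcal{T}(f)-\mathcal{T}(f'))f'$ to be swallowed by a fraction of $\lA w\rA_{\dot H^1}^2$. The strict inequality $s>\tdm$ enters only through the requirement $\eps>0$, without which the Young step degenerates.
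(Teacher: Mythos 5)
Your proof is essentially the paper's: decompose $\mathcal{T}(f)f - \mathcal{T}(f')f' = \mathcal{T}(f)w + (\mathcal{T}(f)-\mathcal{T}(f'))f'$, use the paralinearization~\eqref{n3} to put the difference equation in the quasilinear form~\eqref{n1508} handled by Proposition~\ref{P:energyL2}~$ii)$, estimate the remaining source via Proposition~\ref{P:continuity}~$\ref{Prop:low3})$ with $\delta=\eps$ and via~\eqref{n400} combined with Lemma~\ref{L:B21}, absorb the sub-principal losses by interpolation and Young into the parabolic dissipation, and close with Gronwall in $\dot{H}^{1/2}$. The interpolation exponents you compute are correct, and the argument matches the paper's proof of Lemma~\ref{L:uniqueness} step for step.
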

\begin{proof}
Set
$$
g=f-f',\quad M=\lA f\rA_{L^\infty([0,T];\dot{H}^1\cap \dot{H}^s)}
+\lA f'\rA_{L^\infty([0,T];\dot{H}^1\cap \dot{H}^s)}.
$$
We denote by $C(M)$ various constants depending only on $M$. 

We want to prove that $g=0$. To do so, we use the energy estimate in $\dot{H}^{1/2}(\xR)$. 
The key point is to write that $g$ is a smooth function, in $C^1([0,T];H^\mez(\xR))$, satisfying
$$
\partial_t g+\Lambda g =\mathcal{T}(f)g+F_1
\quad\text{with}\quad F_1=\mathcal{T}(f)f'-\mathcal{T}(f')f'.
$$
This term is estimated by means of point~$\ref{Prop:low3})$ in Proposition~\ref{P:continuity} 
with $\delta=\eps=s-3/2$, 
\be\label{n1501}
\lA F_1\rA_{L^2}=\lA (\mathcal{T}(f)-\mathcal{T}(f'))f'\rA_{L^2}
\le C \lA f-f'\rA_{\dot{H}^{1-\eps}}\lA f'\rA_{\dot{H}^{\tdm+\eps}}=C(M)\lA g\rA_{\dot{H}^{1-\eps}}.
\ee
Recall from \e{n3} that 
$$
\mathcal{T}(f)g=\frac{f_x^2}{1+f_x^2}\Lambda g+V(f)\partial_x g+R(f,g)
$$
where $R(f,g)$ satisfies (setting $\eps=s-3/2$),
\be\label{n1502}
\lA R(f,g)\rA_{L^2}\le C\lA f\rA_{\dot{H}^{\tdm+\eps}}\lA g\rA_{\dot{B}^{1-\eps}_{2,1}}\le C(M)\lA g\rA_{\dot{B}^{1-\eps}_{2,1}}.
\ee
Therefore, $g$ satisfies
$$
\partial_t g -V\partial_x g+\frac{1}{1+f_x^2}\Lambda g=F
$$
where $F=F_1+R(f)g$. In view of the estimates~\e{n1501},~\e{n1502} and 
the embedding 
$\dot{H}^{1-3\eps/2}(\xR)\cap \dot{H}^{1-\eps/2}(\xR)\hookrightarrow \dot{B}^{1-\eps}_{2,1}$ (see Lemma~\ref{L:B21}), 
we have
$$
\la (F,\Lambda g)\ra\le \lA F\rA_{L^2}\lA g\rA_{\dot{H}^1}\le C(M) \lA g\rA_{\dot{H}^{1-\frac{\eps}2}\cap\dot{H}^{1-\frac{3\eps}2}}\lA g\rA_{\dot{H}^1}.
$$
Hence, it follows from Proposition~\ref{P:energyL2} (see point $ii)$) that
$$
\frac{1}{2}  \fract\Vert g \Vert^{2}_{\dot{H}^\mez} +\int  \frac{( \Lambda  g )^2}{1+f_x^2} \dx
\le C(M) \lA g\rA_{\dot{H}^{1-\frac{\eps}2}\cap\dot{H}^{1-\frac{3\eps}2}}\lA g\rA_{\dot{H}^1}.
$$
Next, 
we use interpolation inequalities as in the proof of Proposition~\ref{Prop:aprioriHs}. More precisely, 
by using arguments parallel to those used to deduce \e{n1505} from \e{n1503}-\e{n1504}, we get
$$
\frac{1}{2}  \fract\Vert g \Vert^{2}_{\dot{H}^\mez} +\frac{1}{4(1+\lA f_x\rA_{L^\infty_{t,x}})}\int ( \Lambda g )^2 \dx
\le C(M)\lA g\rA_{\dot{H}^\mez}^2.
$$
This obviously implies that
$$
\frac{1}{2}  \fract\Vert g \Vert^{2}_{\dot{H}^\mez} \le C(M)\lA g\rA_{\dot{H}^\mez}^2.
$$
Since $g(0)=0$, the Gronwall's inequality implies that $\lA g\rA_{\dot{H}^\mez}=0$ so $g=0$, which completes the proof.
\end{proof}

Having proved the uniqueness of solutions, we now study their existence. The key step will be here 
to apply the {\em a priori} estimates proved in Proposition~\ref{Prop:aprioriHs}. This will 
give us uniform bounds for the solutions $f_n$ defined in $\S\ref{S:approximate}$.

\begin{lemma}
There exists $T_0>0$ such that $T_n\ge T_0$ for all $n\in \xN\setminus\{0\}$ 
and such that $(f_n-f_0)_{n\in\xN}$ is bounded in 
$C^{0}([0,T_0]; H^{s}(\xR))$. 
\end{lemma}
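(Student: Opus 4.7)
The plan is to combine the \emph{a priori} estimate of Proposition~\ref{Prop:aprioriHs} with a bootstrap/continuity argument, and to invoke the blow-up criterion~\e{A4} to exclude premature termination of the maximal time of existence. The key point throughout is that all constants appearing in Proposition~\ref{Prop:aprioriHs} are independent of~$n$, so the threshold and the lifespan are uniform in~$n$.

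Fix $A_0\defn \lA f_0\rA_{\dot{H}^1\cap\dot{H}^s}^2$ (assumed positive, else the initial data is zero and the conclusion is trivial), and set the threshold $R\defn 16A_0$. Since $s>1$, the interpolation inequality $\lA u\rA_{\dot{H}^1}\le \lA u\rA_{L^2}^{(s-1)/s}\lA u\rA_{\dot{H}^s}^{1/s}$ combined with Young's inequality yields a constant $C_\ast$ depending only on $s$ such that $\lA u\rA_{\dot{H}^1\cap\dot{H}^s}\le C_\ast \lA u\rA_{L^2\cap\dot{H}^s}$, whence
$$
\lA f_n(t)\rA_{\dot{H}^1\cap\dot{H}^s}^2\le 2C_\ast^2 M_n(t)+2A_0\qquad (t<T_n).
$$
Letting $\mathcal{F}$ denote the non-decreasing function given by Proposition~\ref{Prop:aprioriHs}, I choose $T_0\in(0,1)$ small enough so that
$$
(2+T_0)^2 A_0+T_0\,\mathcal{F}\bigl(2C_\ast^2 R+2A_0\bigr)\le R/2=8A_0.
$$
Such a choice is possible since the left-hand side equals $4A_0<R/2$ at $T_0=0$, and moreover $T_0$ depends only on $A_0$ and $s$.

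Now run the continuity argument. Define
$$
T_n^\ast\defn \sup\bigl\{T\in[0,T_n)\,:\, M_n(T)\le R\bigr\}.
$$
Since $(I-J_n)f_0$ has Fourier support in $\{|\xi|>n\}$, one has $\lA(I-J_n)f_0\rA_{L^2}\le \lA f_0\rA_{\dot{H}^1}$ and $\lA(I-J_n)f_0\rA_{\dot{H}^s}\le\lA f_0\rA_{\dot{H}^s}$, so $M_n(0)=\lA(I-J_n)f_0\rA_{L^2\cap\dot{H}^s}^2\le A_0<R$; as $t\mapsto M_n(t)$ is continuous on $[0,T_n)$ (by the regularity $u_n\in C^0([0,T_n);H^\infty)$), $T_n^\ast>0$. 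For any $T<\min(T_0,T_n^\ast)$, the previous display bounds $\sup_{[0,T]}\lA f_n\rA_{\dot{H}^1\cap\dot{H}^s}^2\le 2C_\ast^2 R+2A_0$, so Proposition~\ref{Prop:aprioriHs} and the choice of $T_0$ give $M_n(T)\le R/2$. Continuity then forces $T_n^\ast\ge \min(T_0,T_n)$: were $T_n^\ast<\min(T_0,T_n)$, one would have $M_n(T_n^\ast)=R$ and simultaneously $M_n(T_n^\ast)\le R/2$, a contradiction.

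It remains to exclude $T_n<T_0$. If that held, the previous step would yield $M_n(t)\le R/2$ for every $t<T_n$, and then, writing $u_n=f_n-J_nf_0$,
$$
\lA u_n(t)\rA_{L^2}\le \lA f_n(t)-f_0\rA_{L^2}+\lA f_0-J_nf_0\rA_{L^2}\le \sqrt{R/2}+\lA f_0\rA_{\dot{H}^1}
$$
uniformly in $t\in[0,T_n)$, contradicting the blow-up criterion~\e{A4}. Hence $T_n\ge T_0$, $T_n^\ast\ge T_0$, and evaluating the bootstrap estimate at $T=T_0$ gives the uniform bound $M_n(T_0)\le R/2$, i.e.\ $(f_n-f_0)_{n\ge 1}$ is bounded in $C^0([0,T_0];L^2\cap\dot{H}^s)=C^0([0,T_0];H^s)$. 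The main obstacle is organizational rather than analytic: one has to arrange the thresholds so that both $R$ and $T_0$ depend only on $A_0$ and $s$, which is possible precisely because the function $\mathcal{F}$ and the constants in Proposition~\ref{Prop:aprioriHs} are $n$-independent, and because $J_n$ being a projection gives the $n$-independent bound $M_n(0)\le A_0$.
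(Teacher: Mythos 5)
Your proof is correct and follows essentially the same strategy as the paper: combine the $n$-independent a priori estimate of Proposition~\ref{Prop:aprioriHs} with a bootstrap/continuity argument and invoke the blow-up alternative~\e{A4} to rule out $T_n<T_0$. In fact your version is slightly more careful than the paper's on one point: you make explicit, via the interpolation $\lA u\rA_{\dot H^1}\le \lA u\rA_{L^2}^{(s-1)/s}\lA u\rA_{\dot H^s}^{1/s}$, how a bound on $M_n(T)$ (which controls $\lA f_n-f_0\rA_{L^2\cap\dot H^s}$) yields a bound on $\sup_{[0,T]}\lA f_n\rA_{\dot H^1\cap\dot H^s}$, a step the paper uses but leaves implicit when it feeds $N_n(\tau_n)$ into $\mathcal F$.
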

\begin{proof}
We use the notations of $\S\ref{S:approximate}$ and Proposition~\ref{Prop:aprioriHs}. 
Given $T<T_n$, we define
$$
M_n(T)=\sup_{t\in [0,T]}\lA f_n(t)-f_0\rA_{L^2\cap \dot{H}^s}^2,\qquad 
N_n(T)=M_n(T)+\lA f_0\rA_{\dot{H}^1\cap \dot{H}^s}^2.
$$
Denote by $\mathcal{F}$ the function whose existence is the conclusion of Proposition~\ref{Prop:aprioriHs} and 
set 
$$
A=10\lA f_0\rA_{\dot{H}^1\cap \dot{H}^s}^2.
$$
We next pick $0<T_{0}\le 1$ small enough such that
$$
3(1+T_0)^2\lA f_0\rA_{\dot{H}^1\cap \dot{H}^s}^2+T_0\mathcal{F}(A)< A,
$$
We claim the uniform bound
\begin{equation*}
\forall n\in\xN\setminus\{0\},~\forall T\in I_n\defn [0,\min\{T_{0},T_n\}),\quad N_n(T) <A.
\end{equation*}
Let us prove this claim by contradiction. Assume that for some $n$ 
there exists $\tau_n\in I_n$ such that $N_n(\tau_n)\ge A$ and consider 
the smallest of such times (then $\tau_n>0$ since $T\mapsto N_n(T)$ is continuous and 
$N_n(0)<A$ by construction). 
Then, by definition, for all $0<T\le \tau_n$, one has
$N_n(T)\le A$ and $N_n(\tau_n)=A$. 
Since $\lA \partial_xf_n(t)\rA_{L^\infty(\xR)}\les \lA f_n(t)\rA_{\dot{H}^1\cap\dot{H}^s}$ (see~\e{o1}),  
we have a uniform control of the $L^\infty_{x}$-norm 
of $\partial_xf_n$ on $[0,T]$ in terms of $A$, hence 
we are in position to apply the {\em a priori} estimate~\e{apriori}. 
Now, if we add $\lA f_0\rA_{\dot{H}^1\cap \dot{H}^s}^2$ to both sides of \e{apriori} 
we deduce that
$$
N_n(\tau_n)+\frac{C}{1+C(A)^2}\int_{0}^{\tau_n}\lA f_n(t)\rA_{\dot{H}^{s+\mez}}^2\dt
\le 3(1+\tau_n)^2\lA f_0\rA_{\dot{H}^1\cap \dot{H}^s}^2+\tau_n\mathcal{F}(N_n(\tau_n)).
$$
We infer that
\begin{align*}
A=N_n(\tau_n)&\le 3(1+\tau_n)^2\lA f_0\rA_{\dot{H}^1\cap \dot{H}^s}^2+\tau_n\mathcal{F}(N_n(\tau_n))\\
&\le 3(1+T_0)^2\lA f_0\rA_{\dot{H}^1\cap \dot{H}^s}^2+T_0\mathcal{F}(A)\\
&<A,
\end{align*}
hence the contradiction. We thus have proved that, for all $n\in \xN$ and all $T\le \min\{T_{0},T_n\}$, we have
$$
\sup_{t\in [0,T]}\lA f_n(t)-f_0\rA_{L^2\cap \dot{H}^s}^2\le A.
$$
This obviously implies that 
$$
\sup_{t\in [0,T]}\lA f_n(t)-f_0\rA_{L^2}\le \sqrt{A}.
$$
Since 
$$
u_n=f_n-J_nf_0=f_n-f_0+(I-J_n)f_0,
$$
and since $\lA (I-J_n)f_0\rA_{L^2}\le \lA f_0\rA_{\dot{H}^1}$, the previous bound 
implies that the norm $\lA u_n(t)\rA_{L^2}$ is bounded for all $t\le \min\{T_{0},T_n\}$. 
The alternative~\e{A4} then implies that the lifespan of $f_n$ is bounded from below by $T_0$. 
And the previous inequality shows that $(f_n-f_0)$ is bounded in $C^0([0,T_0];H^s(\xR))$. 
This completes the proof.
\end{proof}

At that point, we have defined a sequence $(f_n)$ of solutions to well-chosen 
approximate systems. The next task is to prove that this sequence converges. 
Here a word of caution is in order: $\dot{H}^s(\xR)$ 
is not a Banach space when $s>1/2$. 
To overcome this difficulty, we use the fact that $u_n=f_n-f_0$ is bounded in $C^0([0,T_0];H^s(\xR))$, 
where $H^s(\xR)$ is the nonhomogeneous space $L^2(\xR)\cap \dot{H}^s(\xR)$, 
which is a Banach space. We claim that, in addition, $(u_n)$ is a Cauchy sequence 
in $C^0([0,T_0];H^{s'}(\xR))$ for any $s'<s$. Let us assume this claim for the moment. 
This will imply that $(u_n)$ converges in the latter 
to some limit $u$. Now, setting $f=f_0+u$ and using the continuity result for $\mathcal{T}(f)f$ given by $\ref{Prop:low2})$ in Proposition~\ref{P:continuity}, we verify immediately that $f$ is a solution to the Cauchy problem for the 
Muskat equation. It would remain to prove that $u$ is continuous 
in time with values in $H^s(\xR)$ (instead of $H^{s'}(\xR)$ for any $s'<s$). For the sake of shortness, 
this is the only point that we do not prove in details in this paper (referring to~\cite{ABZ3} 
for the proof of a similar result in a case with similar difficulties). 

To conclude the proof of Theorem~\ref{T:Cauchy}, it remains only to establish the following

\begin{lemma}
For any real number $s'$ in $[0,s)$, the sequence $(u_n)$ is a Cauchy sequence in $C^0([0,T_0];H^{s'}(\xR))$.
\end{lemma}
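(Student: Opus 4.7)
The plan is to reduce the Cauchy property in $C^0([0,T_0];H^{s'}(\xR))$ (for any $s'<s$) to the Cauchy property in $C^0([0,T_0];H^{1/2}(\xR))$ by interpolating with the uniform bound in $C^0([0,T_0];H^s(\xR))$ already established. For $n\ge m$, set $g=f_n-f_m$; since $u_n-u_m=g+(J_m-J_n)f_0$ with $\|(J_m-J_n)f_0\|_{H^{1/2}}\le m^{-1/2}\|f_0\|_{\dot{H}^1}\to 0$, it suffices to show $\|g\|_{L^\infty([0,T_0];H^{1/2})}\to 0$ as $m\to\infty$; the initial datum $g(0)=(J_n-J_m)f_0$ moreover satisfies $\|g(0)\|_{L^2}\le m^{-1}\|f_0\|_{\dot{H}^1}$ and $\|g(0)\|_{\dot{H}^{1/2}}\le m^{-1/2}\|f_0\|_{\dot{H}^1}$. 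Subtracting the approximate equations and decomposing
\[
J_n\mathcal T(f_n)f_n-J_m\mathcal T(f_m)f_m=J_n\bigl[\mathcal T(f_n)g+(\mathcal T(f_n)-\mathcal T(f_m))f_m\bigr]+(J_n-J_m)\mathcal T(f_m)f_m,
\]
then paralinearizing $\mathcal T(f_n)g=\gamma(f_n)\Lambda g+V(f_n)\partial_x g+R(f_n,g)$ via Theorem~\ref{T1}~$ii)$ and using $1-\gamma(f_n)=1/(1+f_{nx}^2)$, I cast the equation for $g$ in the form
\[
\partial_t g-V(f_n)\partial_x g+\frac{1}{1+f_{nx}^2}\Lambda g=F,
\]
with $F$ collecting the remainders, one of which has the form $(I-J_n)[\,\cdot\,]$.

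I then apply Proposition~\ref{P:energyL2}~$ii)$ with $f=f_n$. The crucial observation is that $g=J_ng$, because both $f_n$ and $f_m$ have Fourier support in $\{|\xi|\le n\}$; hence $\Lambda g=J_n\Lambda g$, the $(I-J_n)[\,\cdot\,]$ piece of $F$ pairs to zero with $\Lambda g$, and the $J_n$ in front of the other pieces may be dropped. The contributions of $R(f_n,g)$ and $(\mathcal T(f_n)-\mathcal T(f_m))f_m$ are controlled via \eqref{n400}, Proposition~\ref{P:continuity}~$iii)$, Lemma~\ref{L:B21}, Cauchy--Schwarz and interpolation, yielding a Gronwall-friendly $C(M)\|g\|_{\dot{H}^{1/2}}^2$ after absorption into the parabolic dissipation. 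The main obstacle is the asymmetric term $F_3:=(J_n-J_m)\mathcal T(f_m)f_m$, which is only uniformly $O(1)$ in $L^2$; I handle it by duality,
\[
(F_3,\Lambda g)\le \|F_3\|_{\dot{H}^{-1/2}}\,\|g\|_{\dot{H}^{3/2}}.
\]
Since $F_3$ has Fourier support in $\{|\xi|>m\}$ and $\|\mathcal T(f_m)f_m\|_{L^2}\le C(M)$ by Theorem~\ref{T1}~$i)$, one has $\|F_3\|_{\dot{H}^{-1/2}}\le C(M)m^{-1/2}$; and $\|g\|_{\dot{H}^{3/2}}\le C(\|g\|_{L^2}+\|g\|_{\dot{H}^s})\le C(M)$ uniformly in $t$, since $s>3/2$ and $g$ is uniformly bounded in $L^\infty_tH^s$. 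Integrating over $[0,T_0]$ and applying Gronwall yields
\[
\|g\|_{L^\infty_t\dot{H}^{1/2}}^2\lesssim e^{C(M)T_0}\bigl(\|g(0)\|_{\dot{H}^{1/2}}^2+C(M)m^{-1/2}T_0\bigr)\longrightarrow 0.
\]

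For the $L^2$ convergence I use the Duhamel formula. The initial-data contribution is $O(m^{-1})$; the contribution of $F_3$ is also $O(m^{-1})$, thanks to the exponential decay $\|e^{-(t-s)\Lambda}F_3\|_{L^2}\le e^{-(t-s)m}\|\mathcal T(f_m)f_m\|_{L^2}$ coming from its Fourier localization in $\{|\xi|>m\}$; and the remaining piece $\int_0^te^{-(t-s)\Lambda}J_n(\mathcal T(f_n)f_n-\mathcal T(f_m)f_m)\,ds$ is bounded via Proposition~\ref{P:continuity} by $\int_0^tC(M)(\|g\|_{\dot{H}^{3/2}}+\|g\|_{\dot{H}^{1-\delta}})\,ds$, both of which tend to zero by interpolating the just-proved $\dot{H}^{1/2}$-smallness with the uniform $\dot{H}^s$ bound. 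Combining yields $(u_n)$ Cauchy in $C^0([0,T_0];H^{1/2})$, and interpolation with the uniform $C^0([0,T_0];H^s)$ bound completes the proof for every $s'\in[0,s)$.
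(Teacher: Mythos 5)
Your proof is correct and follows the same overall strategy as the paper: establish that $(f_n)$ is Cauchy in $C^0([0,T_0];\dot{H}^{1/2})$ via a parabolic energy estimate, obtain $L^2$-Cauchy by a separate argument, then interpolate with the uniform $H^s$ bound. The place where you add genuine value is the first step: the paper simply asserts that the $\dot{H}^{1/2}$-Cauchy property follows from ``estimates parallel to those used to prove Lemma~\ref{L:uniqueness}'', but, as you correctly identify, the approximate systems introduce an extra term $F_3=(J_n-J_m)\mathcal{T}(f_m)f_m$ and the nonzero initial discrepancy $(J_n-J_m)f_0$, neither of which appears in the uniqueness proof. Your treatment of $F_3$ by $\dot{H}^{-1/2}$-duality, exploiting its Fourier localization in $\{\vert\xi\vert>m\}$ together with the uniform $L^2$ bound on $\mathcal{T}(f_m)f_m$ from Theorem~\ref{T1}~i) and the uniform $\dot{H}^{3/2}$ bound on $g$, is a clean way to close that gap. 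For the $L^2$ step the two routes are slightly different in technique: the paper takes the $L^2$-scalar product of the equation for $u_n-u_p$ with $u_n-u_p$ and estimates $G_{np}$ in $L^2(0,T;\dot{H}^{-1/2})$ using the $m^{-1/2}$ frequency gain, whereas you invoke Duhamel together with the decay $\lVert e^{-\tau\Lambda}h\rVert_{L^2}\le e^{-\tau m}\lVert h\rVert_{L^2}$ for $\widehat{h}$ supported in $\{\vert\xi\vert>m\}$, yielding the slightly stronger $m^{-1}$ factor after integrating in time; both arguments are valid and rely on the same structural fact (the high-frequency localization of the truncation error). One small slip worth flagging: you cite Proposition~\ref{P:energyL2}~ii), but since the equation you write for $g$ carries the operator $J_n$ and you explicitly verify the hypothesis $J_ng=g$, the statement you actually invoke is part~i); this is harmless, as the $J_n$'s disappear in the pairing with $\Lambda g$ exactly as you observe.
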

\begin{proof}
The proof is in two steps. 
We begin by proving that $(f_n)$ is a Cauchy sequence in $C^0([0,T_0];\dot{H}^{s'}(\xR))$ 
for $1/2\le s'<s$. Then, we use this result and an elementary $L^2$-estimate to infer that $(u_n)$ is a Cauchy sequence in $C^0([0,T_0];L^2(\xR))$.

By using estimates parallel to those used to prove Lemma~\ref{L:uniqueness}, one obtains that 
$(f_n)$ is a Cauchy sequence in $C^0([0,T_0];\dot{H}^\mez(\xR))$. Now consider $1/2<s'<s$. By interpolation, 
there exists $\alpha$ in $(0,1)$ such that
$$
\lA u\rA_{\dot{H}^{s'}}\les \lA u\rA_{\dot{H}^\mez}^\alpha \lA u\rA_{\dot{H}^s}^{1-\alpha}.
$$
Consequently, since $(f_n)$ is bounded 
in $C^0([0,T_0];\dot{H}^s(\xR))$, we deduce that $(f_n)$ is a Cauchy sequence in $C^0([0,T_0];\dot{H}^{s'}(\xR))$ for any $s'<s$. 

It remains only to prove that $(u_n)$ is a Cauchy 
sequence in $C^0([0,T_0];L^2(\xR))$. To do so, we proceed differently. 
Starting from (see \e{n1509}), 
\begin{equation*}
\partial_t f_n+\Lambda f_n=J_n\mathcal{T}(f_n)f_n,
\end{equation*}
we obtain that $u_n-u_p=f_n-f_p-(J_n-J_p)f_0$ satisfies
\be\label{n1701}
\partial_t (u_n-u_p)+\Lambda(u_n-u_p)=F_{np}+G_{np}
\ee
where
$$
F_{np}=J_n(\mathcal{T}(f_n)f_n-\mathcal{T}(f_p)f_p),\quad G_{np}=(J_n-J_p)\big(-\Lambda f_0+\mathcal{T}(f_p)f_p\big).
$$
We now use an elementary $L^2$-estimate. We take the $L^2$-scalar product of the equation~\e{n1701} with $u_n-u_p$, to obtain, 
since $u_n(0)-u_p(0)=0$,
$$
\sup_{t\in [0,T]}\lA u_n(t)-u_p(t)\rA_{L^2}\le \lA F_{np}\rA_{L^1([0,T];L^2)}
+\lA G_{np}\rA_{L^2([0,T];\dot{H}^{-\mez})}.
$$
So it remains only to prove that $\lA F_{np}\rA_{L^1([0,T];L^2)}$ and 
$\lA G_{np}\rA_{L^2([0,T];\dot{H}^{-\mez})}$ are arbitrarily small for $n,p$ large enough. 
Here we use the result proved in the first part of the proof. Namely, since $(f_n)$ 
is a Cauchy sequence in $C^0([0,T];\dot{H}^1(\xR)\cap \dot{H}^\tdm(\xR))$, 
we deduce from point $\ref{Prop:low2})$ in Proposition~\ref{P:continuity} that 
$$
\mathcal{T}(f_n)f_n-\mathcal{T}(f_p)f_p
$$
is small in $C^0([0,T];L^2(\xR))$ for $n,p$ large enough. 
On the other hand, using the estimate
$$
\lA (J_n-J_p)u\rA_{\dot{H}^{-\mez}}\le\frac{1}{\sqrt{\min(n,p)}}\lA u\rA_{L^2},
$$
we verify that 
$\lA G_{np}\rA_{L^2([0,T];\dot{H}^{-\mez})}$ is arbitrarily small for $n,p$ large enough. 
This completes the proof.
\end{proof}

\section*{Acknowledgments} 
\noindent  T.\ A. acknowledges the support of the SingFlows project, grant ANR-18-CE40-0027 
of the French National Research Agency (ANR). O.\ L.  has been partially supported by the National Grant MTM2014-59488-P from 
the Spanish government and the ERC through the Starting Grant project H2020-EU.1.1.-63922

\vspace{3mm}

\noindent\textbf{Thomas Alazard}\\
\noindent CNRS and CMLA, \'Ecole Normale Sup{\'e}rieure de Paris-Saclay, France

\vspace{3mm}
\noindent\textbf{Omar Lazar}\\
\noindent Departamento de An\'alisis Matem\'atico \& IMUS, Universidad de Sevilla, Spain

\end{document}